\newcommand{\ncm}{\newcommand}
\newtheorem{theorem}{Theorem}[section]
\newtheorem{prop}[theorem]{Proposition}
\newtheorem{lemma}[theorem]{Lemma}
\newtheorem{cor}[theorem]{Corollary}
\newtheorem{lem&def}[theorem]{Lemma \& Definition}
\newtheorem{definition}[theorem]{Definition}
\newtheorem{example}[theorem]{Example}
\newtheorem{remark}[theorem]{Remark}
\def\C{\mathbb{C}\,}
\def\Z{\mathbb{Z}\,}
\def\R{\mathbb{R}\,}
\ncm{\Res}{\mbox{\rm Res}}
\def\End{\mbox{\rm End}\,}
\def\Hom{\mbox{\rm Hom}\,}
\def\ot{{\otimes}}    %tensor product
\def\|{\, | \, }
\def\bra{\langle}
\def\ket{\rangle}
\ncm{\rarr}[1]{\stackrel{#1}{\longrightarrow}}
\ncm{\larr}[1]{\stackrel{#1}{\longleftarrow}}
\def\eps{\varepsilon}
\def\du1{\hat 1}
\def\bra{\langle}
\def\ket{\rangle}
\def\-1{_{(-1)}}
\def\0{_{(0)}}
\def\1{_{(1)}}
\def\2{_{(2)}}
\def\3{_{(3)}}
\def\du1{\hat 1}
\begin{document}

\title[Induction-restriction depth of subgroups]{On subgroup depth}
\author{Sebastian  Burciu}
\address{Inst.\ of Math.\ ``Simion Stoilow'' \\
Romanian Academy, P.O. Box 1-764 \\
 RO-014700, Bucharest, Romania}
\email{smburciu@syr.edu}
\author{Lars Kadison}
\address{Department of Mathematics \\
University of Pennsylvania \\
D.R.L.\ 209 S.\ 33rd St. \\ Philadelphia, PA 19104-6395 \\ Current
address: Departamento de Matematica \\ Faculdade de Ciencias da
Universidade do Porto \\Rua Campo Alegre, 687\\ 4169-007 Porto, 
Portugal}
\email{lkadison@math.upenn.edu}
%\URL{www.math.upenn.edu/~lkadison}

\author{Burkhard K\"ulshammer}
\address{Mathematisches Institut\\
Friedrich-Schiller-Universit\"at \\
 07737 Jena \\
Germany}
\email{kuelshammer@uni-jena.de}
\thanks{}
\subjclass{Primary: 16K20, 19A22, 20B35, 20D35}
\date{\today}

\begin{abstract}
We define a notion of depth for an inclusion of multimatrix algebras
$B \subseteq A$ based on a comparison of powers of the
induction-restriction table $M$ (and its transpose matrix). This
notion of depth coincides with the depth from \cite{K1}. In
particular depth $2$ extensions coincide with normal extensions as
introduced by Rieffel in \cite{R}. For a group extension $H \subset
G$ a necessary depth $n$ condition is given in terms of the core of
$H$ in $G$. We prove that the subgroup depth of symmetric groups
 $S_n < S_{n+1}$ is $2n-1$. An appendix
by S.~Danz and B.~K\"ulshammer determines the subgroup depth of
alternating groups $A_n < A_{n+1}$ and of some other group
extensions.
\end{abstract}

\maketitle

\section{Introduction}

Depth two is an algebraic notion for noncommutative ring extensions with a Galois theory associated to it
\cite{KN, KS}.
If applied to a subalgebra pair of quantum algebras, depth two is a notion of normality that extends ordinary  normality for subgroups and Hopf subalgebras \cite{K2,KK,BK2}.  A Hopf subalgebra $K$ is
normal in a finite dimensional Hopf algebra $H$ if and only if $H$ is a depth two ring extension of $K$:
 see \cite{BKu} for this theorem and how it generalizes to faithfully flat, finitely generated
 projective Hopf algebra extensions over a commutative
 base ring as well as one-sided versions of normality,
depth two and Hopf-Galois extension.

 For $M$ the matrix of the induction-restriction table
for a subalgebra pair of semisimple $\C$-algebras $B \subseteq A$, the depth two condition is given by a matrix inequality $MM^tM \leq qM$ for some $q \in \Z_+$, an observation in \cite{BK} that we build on in this paper.
Recall that a Hopf subalgebra $K $ is normal in a Hopf algebra $H$ if $HK^+ = K^+ H$ where $K^+$ is the maximal ideal $\ker \eps$
restricted to $K$. A predecessor of this definition is Rieffel's definition of a  normal subring: a semisimple subalgebra $B$ in a semisimple algebra $A$ is a normal subring  when any maximal ideal in $A$ restricts to an $A$-invariant ideal in $B$ \cite{R}.
We show in Section~4 that if $M$ is the inclusion matrix
of a semisimple algebra pair $B \subseteq A$, the depth two
condition is equivalent to  $B$ being a normal subring in $A$.  As a consequence,  higher depth subgroups or semisimple subalgebras may be described
as being normal further along in the Jones tower of iterated endomorphism algebras (Corollary~\ref{cor-tower} below).

In \cite{K1} the notion of depth more
than two for a Frobenius extension $B \subseteq A$  is shown to be simplified  via a generalization of depth two from ring extensions to  towers of three rings, for a tower of three rings should be
 chosen in
the Jones tower above $B \subseteq A$. In \cite{BK} this idea was applied
to a pair of semisimple complex algebras $B \subseteq A$ with inclusion matrix $M$: the subalgebra pair is depth $n$ if
$n$ is least integer for which  $M^{n+1}$
is less than a multiple of $M^{n-1}$, where powers of an $r \times s$ matrix $M$
are understood by  $M^2 = MM^t$, $M^3 = MM^tM$, and so forth.  As
noted in Section~2 below, depth $n$ is equivalently the  point of stabilization of the zero entries
of even or odd powers of $M$, which form a descending chain of subsets.

In \cite{BK} the generalized depth two condition on a tower of semisimple
algebras $C \subseteq B \subseteq A$ with inclusion matrices $N$ and $\tilde{M}$,
respectively, is given by $N\tilde{M}\tilde{M}^t\tilde{M} \leq qN\tilde{M}$ where $q$ is a positive
integer: let $N$ be the identity matrix to recover the depth two condition on
a subalgebra pair. Build a  tower of algebras above $B \subseteq A$,
where $A_1 = \End A_B$ and one iterates the endomorphism ring construction
and embeds via left multiplication. Then $A \hookrightarrow A_1$
 has inclusion matrix $M^t$, and subalgebra pair $B \subseteq A$ is depth $n$
if $n$ is the least integer for which the tower $B \subseteq A_{n-3} \subseteq A_{n-2}$ satisfies the
generalized depth two condition. For this tower of three algebras the  inclusion matrices
are $N = M^{n-2}$, $\tilde{M} = M$ or $M^t$, in the generalized depth two condition, which plugged in and simplified, becomes the depth $n$
condition, $M^{n+1} \leq qM^{n-1}$, on the  inclusion matrix $M$.

The paper is organized as follows.  In Section~1
we define a matrix $M$ of non-negative integer coefficients with nonzero rows and columns to be depth $n > 1$ if $n$ is the least integer for which the $n+1$'st power of $M$ is less than
a multiple of the $n-1$'st power of $M$, where $M^2$ denotes $MM^t$,
$M^3 = MM^tM$ and so on. For example, if $M$ is the induction-restriction
table of irreducible characters of a finite group $G$ and a subgroup $H$,
then $M$ is depth two if and only if   $H$ is a normal subgroup of $G$ \cite{KK}.
If $M$ is the inclusion matrix of a subalgebra pair of multimatrix $\C$-algebras
$B \subseteq A$,  the matrix $M$ is depth two if and only if
$B$ is a normal subring in $A$
(in the sense of Rieffel \cite{R}) as shown below in Theorem~\ref{th-Rief}.  We
study depth three or more in Section~5,~6 and several appendices by Danz and the third coauthor. We prove that the induction-restriction
depth of permutation groups $S_{n} < S_{n+1}$ is $2n-1$. In Section~3, we make use of a well-known interpretation of the inclusion matrix $M$ as the incidence matrix of a bicolored
weighted multigraph of semisimple algebras $B \subseteq A$,
showing that odd depth is one plus the diameter of the row corresponding
to the simples of $B$; even depth is two plus the maximum graphical  distance along edges of the graph from an equivalence class of simples of $B$,
under one simple of $A$, to the simple of $B$
furthest away.   In Section~6, we show that a subgroup $H$ of $G$ has depth
bounded above by $2n$ if the largest normal subgroup of $G$ contained in $H$ (i.e.\ the core) is the intersection of $n$ conjugates
of $H$ (and bounded above by $2n-1$ if the core is trivial).

Throughout this paper we work over $\C$, which may be replaced by any algebraically closed field of characteristic zero with the same results.
In this case semisimple algebras  split into direct products of matrix algebras which we call multimatrix algebras. In this paper then, semisimple
algebra should be understood as split semisimple or multimatrix algebra.

\section{Depth of an irredundant matrix}

In this section, we make an introductory study of irredundant matrices,
which naturally arise as the induction-restriction table of irreducible $\C$-characters of a subgroup within a finite
group.  This type of matrix also  occurs more generally as the induction-restriction table of simple modules, equivalently incidence matrices
of inclusion diagrams, belonging to a subalgebra pair of  semisimple algebras as explained in the next section.

Let $M = (m_{ij})$ be an $r \times s$ matrix of non-negative integer entries $m_{ij} \in \Z_{\geq 0}$,
where each column and row vector is nonzero; such a matrix is called
\textit{irredundant} matrix in this paper. The matrix $M$ is \textit{positive}
if each of its entries is a positive integer.
Its (right) \textit{square} will be
the order $r$ symmetric matrix $\mathcal{S} := M^2 := MM^t$.
Note that the $(i,j)$-entry $s_{ij}$ of $\mathcal{S}$
is the euclidean inner product of rows $i$ and $j$ in $M$.
In particular, the diagonal $s_{ii} > 0$ since each row in $M$ is nonzero.

Continuing, the cube of $M$ is just $M^3 = MM^tM = \mathcal{S}M$, $M^4 = \mathcal{S}^2$, etc.
The odd powers $M^{2n+1} = \mathcal{S}^nM$ are all of size $r \times s$,
and the even powers $M^{2n} = \mathcal{S}^n$ are symmetric matrices of order $r$.

Let $N= (n_{ij})$ be another $r \times s$ irredundant matrix.  The matrices $N$ and $M$ are \textit{equivalent up to permutation} if there are permutation matrices $P \in S_r$
and $Q \in S_s$ such that $M = PNQ$.  We use the ordering
$M \geq N$ if $m_{ij} \geq n_{ij}$ for each $i = 1,\ldots,r$ and $j= 1,\ldots,s$.  If $T \geq 0$ denotes a square irredundant matrix of order $r$,
then $TM \geq TN$ if $M \geq N$; similarly $MT \geq NT$ for an $s \times s$-matrix
$T \geq 0$. Note that if $T$ is positive, i.e. $T > 0$, then also $TM > 0$
since columns in $M$ include positive entries.

The definition below comes from considerations of what  higher depth is
for Frobenius extensions when restricted to semisimple algebra pairs with inclusion matrix $M$ as outlined in the introduction. (Although not needed in this paper, the interested reader
should see \cite{KN, K1} for the definition of higher depth Frobenius extensions and see \cite{BK} for  why semisimple $\C$-algebra pairs
are split separable Frobenius extensions.)

\begin{definition} An $r \times s$ matrix $M$ is of \it{depth} $n \geq 2$ if $n$ is the least integer for
which  the following inequality (called a depth $n$
matrix inequality) holds for some
$q \in \Z_+$,
\begin{equation}
 M^{n+1} \leq qM^{n-1}.
\end{equation}
\end{definition}

The definition depends only on the equivalence class
of $M$ up to permutation.  In the case $n = 2$, $M$ is depth two if $\mathcal{S}M \leq qM$ for some positive integer $q$.
Multiplying from the right by $M^t$, one obtains $\mathcal{S}^2 \leq q\mathcal{S}$
(fix the notation $\mathcal{S} = MM^t$); thus, $M$ also satisfies a  depth three condition.  Iterating,  we see $M$ satisfies depth $n \geq 2$ conditions, and similarly, a depth $n$ matrix satisfies depth $n + k$ matrix inequality for any positive integer $k$.
However, the depth $n$ matrix inequality for $M$ does not imply a  depth $n-1$ matrix inequality for $M$. We  note
at several points in this paper that $M$ necessarily has finite depth.
Again, the depth of $M$ is the least positive integer $k \geq 2$ for which $M$
satisfies a depth $k$ inequality.

Given an integer irredundant
$r \times s$ matrix $M = (m_{ij})$, let the zero entries be
collected in $Z(M) = \{ (i,j) | m_{ij} = 0 \}$; its complement
$A(M) = \{ (i,j) | m_{ij} \neq 0 \}$ will be useful in the next section.
Note that the zero entries of the even or odd powers of an irredundant matrix $M$ form
a descending chain of subsets of the rectangular array $\underline{r} \times \underline{s}$ where $\underline{r} = \{1,\ldots, r\}$:
$$ Z(M^{n-1}) \supseteq Z(M^{n+1}) \supseteq  Z(M^{n+3}) \supseteq \cdots $$
The proof is simply the following.
Let $n \geq 2$ and denote the entries of  $M^{n+1}$ by $ (p_{ij})$ and the entries of $M^{n-1}$ by $(q_{ij})$. Suppose that
$0 = p_{ij} = \sum_{k=1}^r s_{ik}q_{kj}$. Then $q_{ij} = 0$
since the diagonal entries of $M^2$ are positive, so $s_{ii} > 0$.

From the finite order of the matrix $M$ it is clear that the descending chain of  subsets $Z(M^{n + 2k+1})$ ($k = \ldots, -1,0,1,\ldots$)
must stabilize at some point (i.e. satisfy dcc).  The next proposition notes that the point
at which these subsets are equal is the depth of the matrix.

\begin{prop} An irredundant matrix $M$ satisfies a depth $n$ inequality if and only if
$$Z(M^{n-1}) = Z(M^{n+1}).$$
\end{prop}
\begin{proof} If $M$ is depth $n$, then there is a positive integer $t$ such
that $M^{n+1} \leq tM^{n-1}$. Suppose the entry $q_{ij} = 0$ in the matrix $M^{n-1}$. By the matrix inequality, the corresponding entry in $M^{n+1}$ is zero, so $Z(M^{n-1}) \subseteq Z(M^{n+1})$.  Together with the opposite inclusion noted above, conclude that $Z(M^{n-1}) = Z(M^{n+1})$.

Conversely, if $Z(M^{n-1}) = Z(M^{n+1})$,  we may choose $t$ to be the maximum of the integer entries in $M^{n+1}$, in which case $M^{n+1} \leq t M^{n-1}$.
\end{proof}
The nonzero entries in $M$ recorded in the subset $A(M)$  occur in the next section.  The subset $A(M)$ is of course the complement of $Z(M)$
and the like powers form an ascending chain,
$$ A(M^{n-1}) \subseteq A(M^{n+1}) \subseteq A(M^{n+3}) \subseteq \cdots $$
which must end at some point (satisfying an acc condition).
The following is an obvious corollary:
\begin{cor}
\label{cor-stabilization}
An irredundant matrix $M$ satisfies a depth $n$ inequality if and only if $A(M^{n-1}) = A(M^{n+1})$.
\end{cor}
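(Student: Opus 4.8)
The plan is to deduce this directly from the preceding Proposition by passing to complements, since by construction $A(M^k)$ is nothing but the set-theoretic complement of $Z(M^k)$. First I would observe that $M^{n-1}$ and $M^{n+1}$ are matrices of the same shape: because their exponents differ by $2$ they have the same parity, so both are $r \times r$ symmetric matrices when $n$ is odd, and both are $r \times s$ matrices when $n$ is even. In either case the two matrices are indexed by one and the same rectangular array $U$ (equal to $\underline{r} \times \underline{r}$ or to $\underline{r} \times \underline{s}$ according to the parity of $n$), so that $Z(M^{n-1})$, $Z(M^{n+1})$, $A(M^{n-1})$ and $A(M^{n+1})$ are all subsets of this common set $U$.

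Next I would use that within the fixed universe $U$ one has $A(M^{n\pm 1}) = U \setminus Z(M^{n\pm 1})$, and that passing to the complement is an involutive, inclusion-reversing bijection on the subsets of $U$. In particular two subsets of $U$ coincide exactly when their complements coincide, so
$$ Z(M^{n-1}) = Z(M^{n+1}) \quad \Longleftrightarrow \quad A(M^{n-1}) = A(M^{n+1}). $$
Combining this equivalence with the Proposition, which asserts that $M$ satisfies a depth $n$ inequality if and only if $Z(M^{n-1}) = Z(M^{n+1})$, immediately yields the claimed equivalence with $A(M^{n-1}) = A(M^{n+1})$.

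The only point that requires any care — and it is minor — is the verification that $M^{n-1}$ and $M^{n+1}$ genuinely share an ambient index set, so that ``complement'' refers to the same universe on both sides; this is exactly the parity observation above. Everything else is purely formal, which is precisely why the statement is recorded as an immediate corollary rather than given an independent argument.
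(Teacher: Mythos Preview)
Your proposal is correct and matches the paper's approach: the paper records the statement as an ``obvious corollary'' with no proof, and your argument via complementation of the preceding Proposition is exactly the intended one-line deduction. Your extra care in checking that $M^{n-1}$ and $M^{n+1}$ share a common index set is a nice touch, though the paper does not bother to spell this out.
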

Another obvious corollary:
\begin{cor} If $M^{n-1} > 0$, then $M$ has depth $n$ or less.
\end{cor}
As noted above, an irredundant matrix has finite depth. In fact,
\begin{cor} An irredundant $r \times s$ matrix $M$ has depth $\leq 2r-1$.
\end{cor}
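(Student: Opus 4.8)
The plan is to invoke the stabilization criterion of Corollary~\ref{cor-stabilization} and reduce the entire question to the even powers $M^{2k} = \mathcal{S}^k$, where $\mathcal{S} = MM^t$. Since the assertion ``depth $\leq 2r-1$'' is exactly the assertion that $M$ satisfies a depth $(2r-1)$ inequality, the corollary shows it suffices to prove $A(\mathcal{S}^{r-1}) = A(\mathcal{S}^r)$; that is, the ascending chain of support sets of the even powers has already stabilized by exponent $r-1$. Unwinding the conventions, $M^{2r-2} = \mathcal{S}^{r-1}$ and $M^{2r} = \mathcal{S}^r$, so with $n = 2r-1$ this is precisely the hypothesis $A(M^{n-1}) = A(M^{n+1})$ that the corollary converts into the desired depth inequality.

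The key observation I would exploit is that $\mathcal{S}$ is a symmetric non-negative integer matrix with strictly positive diagonal, since each $s_{ii} > 0$ as every row of $M$ is nonzero. I would therefore read $\mathcal{S}$ as the adjacency matrix of an undirected graph $\Gamma$ on the vertex set $\underline{r} = \{1, \ldots, r\}$, joining $i$ and $j$ exactly when $s_{ij} > 0$ and equipping every vertex with a self-loop. Then $(\mathcal{S}^k)_{ij} > 0$ holds precisely when there is a walk of length $k$ from $i$ to $j$ in $\Gamma$; and because of the self-loops one may always lengthen a shorter walk by repeating a vertex, so such a walk exists if and only if the graph distance satisfies $d_\Gamma(i,j) \leq k$. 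Thus $A(\mathcal{S}^k) = \{(i,j) : d_\Gamma(i,j) \leq k\}$ is exactly the set of pairs within distance $k$, and the ascending chain $A(\mathcal{S}) \subseteq A(\mathcal{S}^2) \subseteq \cdots$ records the successively larger balls of radius $k$.

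With this dictionary the bound is immediate: a shortest path between two vertices meets distinct vertices, so in a connected graph on $c$ vertices every distance is at most $c-1$. Each connected component of $\Gamma$ has at most $r$ vertices, hence the largest finite distance occurring in $\Gamma$ is at most $r-1$. Consequently $A(\mathcal{S}^{r-1})$ already contains every pair $(i,j)$ lying in a common component, and no higher power can enlarge it, giving $A(\mathcal{S}^{r-1}) = A(\mathcal{S}^r)$ and therefore depth $\leq 2r-1$.

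The step I expect to require the most care is the equivalence in the second paragraph, namely that a positive entry of $\mathcal{S}^k$ records distance $\leq k$ rather than merely the existence of some length-$k$ walk; this is exactly where strict positivity of the diagonal is essential, since it is what makes the support sets nested (the ascending-chain property already noted in the excerpt) and what allows walks to be padded to the precise required length. I would also be careful to phrase the diameter estimate per component rather than globally, because $\Gamma$ need not be connected---for instance $M = I_r$ yields a $\Gamma$ with no edges beyond the loops---yet the per-component bound $\leq r-1$ is all the argument uses, so disconnectedness causes no difficulty.
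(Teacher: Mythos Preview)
Your proof is correct, but it takes a genuinely different route from the paper's. The paper argues algebraically via Cayley--Hamilton: the characteristic polynomial of $\mathcal{S} = MM^t$ yields an equation $\mathcal{S}^r + a_1\mathcal{S}^{r-1} + \cdots + a_r I = 0$, which immediately forces $A(\mathcal{S}^r) \subseteq A(\mathcal{S}^{r-1})$ and hence depth $\leq 2r-1$. Your argument instead reads $\mathcal{S}$ as the adjacency matrix of a graph on $r$ vertices with self-loops, identifies $A(\mathcal{S}^k)$ with the set of pairs at graph distance $\leq k$, and invokes the elementary bound that the diameter of any component is at most $r-1$.

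Both arguments land on the same stabilization $A(\mathcal{S}^{r-1}) = A(\mathcal{S}^r)$. The Cayley--Hamilton proof is shorter and needs no interpretation of what the entries of $\mathcal{S}^k$ mean. Your combinatorial argument is more elementary and more transparent about \emph{why} stabilization occurs by step $r-1$; in fact it anticipates the distance interpretation that the paper develops later (Proposition~\ref{prop-entries2} and Theorem~\ref{odddepth}), where exactly this diameter bound reappears in the language of the Bratteli diagram. Your care about the per-component phrasing is appropriate and your handling of the positive diagonal (padding walks to the required length) is the correct justification for the nesting of supports.
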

\begin{proof} The characteristic polynomial of $\mathcal{S} = MM^t$ gives an equation of
the form
$$\mathcal{S}^r + a_1\mathcal{S}^{r-1} + \cdots + a_{r-1} \mathcal{S} + a_r = 0$$
with $a_1, \ldots, a_r \in \C$, by the Cayley-Hamilton theorem. Thus
$$A(M^{2r}) = A(\mathcal{S}^r) \subseteq A(\mathcal{S}^{r-1}) = A(M^{2r-2}),$$
so that $M$ has depth $2r-1$ or less.
\end{proof}
\begin{example}\normalfont
We compute the depth of the matrix $M$ below, which is the inclusion
matrix, equivalently induction-restriction table, of the subgroup
$D_8 < S_4$, the symmetries of the square in the group of all
permutations of four letters. One has
$$M= \left( \begin{array}{ccccc}
  1 & 0 & 1 & 0 & 0 \\
0 & 0 & 0 & 0 & 1 \\
0 & 0 & 0 & 1 & 0 \\
0 & 1 & 1 & 0 & 0 \\
0 & 0 & 0 & 1 & 1
\end{array} \right),
 $$
and it can be computed directly that $M$ has depth $4$ (indeed
$\mathcal{S}^2M \leq 5\mathcal{S}M$).
\end{example}

\section{On multimatrix algebra inclusions}
\label{mult}

Suppose $B \subseteq A$ is an inclusion of multimatrix algebras.
Label the simple $A$-modules by $V_1, \ldots, V_s$ and the simples of $B$ by
$W_1, \ldots, W_r$. Restrict the $k$'th
simple $A$-module $V_k$ to a $B$-module and express in terms of direct
sum of simples
\begin{equation}
V_k\!\downarrow_B \cong \oplus_{i = 1}^r m_{ik}W_i
\end{equation}
We let $M $ be the $r \times s$-matrix, or table, with entries $m_{ij}$: $M = (m_{ij})$. By a well-known generalization of Frobenius reciprocity, the rows give induction of the $B$-simples:
\begin{equation}\label{ind}
W_i \! \uparrow^A = W_i^A = \oplus_{j = 1}^s m_{ij}V_j
\end{equation}
since $W_j^A = W_j \otimes_B A$, $V_k \downarrow_B \cong \Hom (A_B,
V_k)$ and $\Hom (W_i, W_j) = \{ 0 \}$ if $i \neq j$; i.e., if $[W_j^A, V_k]$ denotes the number of constituents
in $W_j^A$ isomorphic to $V_k$, Frobenius reciprocity is given
by
\begin{equation}
[W_i^A, V_k] = m_{ik} = [W_i, V_k\! \downarrow_B]
\end{equation}

The matrix $M$ is known as the \textit{inclusion matrix} of $B$
in $A$ \cite{GHJ}. As seen above it corresponds to the induction-restriction table  (as it is known in group theory) for simples or their irreducible characters.
It may also be viewed as a matrix representation of a linear mapping in  K-theory, between the groups ${\Z}^r \cong K_0(B) \rightarrow K_0(A) \cong {\Z}^s$.

For group algebras $A$ and $B$ corresponding to a finite group $G$ with
subgroup $H$, we will in practice obtain $M$ as follows.  Both character tables of $G$ and $H$ will be assumed known.  Restrict each of the $s$ irreducible characters of $G$
to $H$, then express the restricted character of $G$ as a non-negative integer coefficient linear combination of the $r$ irreducibles of $H$ by using inner products of characters.

%sim and approx
\subsection{The relation and the equivalence relation}\label{dee}
We say that $W_i \sim W_j$ if and only if there is a simple $A$-module $V$ such that $W_i $ and $W_j$ are both constituents of $V\downarrow_B$. The relation $\sim$ is reflexive and symmetric but not transitive in general.

Its transitive closure is denoted by $\approx$. Thus we say that $W_i \approx W_j$ if and only if there is $m \geq 1$ and a sequence $W_{i_0}, W_{i_1},\cdots, W_{i_{m-1}}, W_{i_m}$ of simple $B$-modules such that $W_i=W_{i_0} \sim W_{i_1}\sim W_{i_2} \sim \cdots \sim W_{i_{m-1}} \sim W_{i_m}=W_j$. %(Of course $W_0,\cdots, W_{m} \in \{W_1, W_2, \cdots, W_r\}$).
%distance

{\bf Notation:} We denote the above equivalence relation by $d_B^A$.
(This equivalence relation is considered before in \cite[Rieffel]{R};
this section and the next may be viewed as a continuation of results
in this article.)

%By definition of $\sim$ one has that $\alpha \sim \beta$ if and only if $m_B(\alpha,\; T(\beta))>0$. Remark that $\alpha$ is a constituent of $T(\alpha)$ for any irreducible character $\alpha$. Also if $\alpha$ is a constituent of $\beta$ then clearly $T(\alpha)$ is a constituent of $T(\beta)$. If $\alpha \sim \beta$ and $ \beta \sim \gamma $ then $\alpha$ is a constituent of $T(\beta)$ and $\beta$ is a constituent of $T(\gamma)$. Therefore $\alpha$ is a constituent of $T^2(\gamma)$.
\subsection{On the equivalence relation}
Let $B$ be a multimatrix subalgebra of $A$.
For an irreducible character $\alpha \in \mathrm{Irr}(B)$ let $f_{\alpha}$ be the central idempotent corresponding to $\alpha$. Similarly if $\chi \in \mathrm{Irr}(A)$ then $e_{\chi}$ is the corresponding central idempotent in $A$.
Consider the commutative algebra $\mathcal{Z}(A)\cap B $ as a subalgebra of $\mathcal{Z}(A)$ and $\mathcal{Z}(B)$. Then there are partitions of characters $\mathrm{Irr}(A)=\bigsqcup_{i=1}^t\mathcal{A}_i$ and $\mathrm{Irr}(B)=\bigsqcup_{i=1}^t\mathcal{B}_i$ such that a basis of primitive idempotents for $\mathcal{Z}(A)\cap B $ is given by

\begin{equation}
\label{mformula}
m_i=\sum_{ \chi \in \mathcal{A}_i}e_{\chi}=\sum_{ \alpha \in \mathcal{B}_i}f_{\alpha}.
\end{equation}

\begin{prop}\label{div1} Suppose that $B\subset A$ is an inclusion of multimatrix algebras with $A$ free as left $B$-module.
With the above notations it follows that

\begin{equation}
\label{firstformula}
(\sum_{ \chi \in \mathcal{A}_i}\chi(1)\chi)\downarrow_B^A=\frac{\dim A}{\dim B}\sum_{ \alpha \in \mathcal{B}_i}\alpha(1)\alpha.
\end{equation}

\begin{equation}
\label{secondformula}
(\sum_{ \alpha \in \mathcal{B}_i}\alpha(1)\alpha)\uparrow^A_B=\sum_{ \chi \in \mathcal{A}_i}\chi(1)\chi.
\end{equation}
\end{prop}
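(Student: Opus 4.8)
The plan is to realize every sum $\sum_{\chi\in\mathcal{A}_i}\chi(1)\chi$ and $\sum_{\alpha\in\mathcal{B}_i}\alpha(1)\alpha$ as the character of an explicit module cut out by the idempotent $m_i$, and then to prove the two formulas by exhibiting module isomorphisms. The key feature of $m_i$ is that it is central in $A$ (so $Am_i=m_iA$ is a two-sided ideal, hence an $A$-module summand of $A$) and simultaneously central in $B$. By \eqref{mformula} we have $Am_i\cong\bigoplus_{\chi\in\mathcal{A}_i}M_{\chi(1)}(\C)$, so the left regular action of $A$ on the ideal $Am_i$ decomposes as $\bigoplus_{\chi\in\mathcal{A}_i}V_\chi^{\oplus\chi(1)}$ and therefore has character $\sum_{\chi\in\mathcal{A}_i}\chi(1)\chi$; likewise the left regular $B$-module $Bm_i$ has character $\sum_{\alpha\in\mathcal{B}_i}\alpha(1)\alpha$. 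Thus \eqref{secondformula} asserts an equality of characters of the $A$-modules $\mathrm{Ind}_B^A(Bm_i)=A\otimes_B Bm_i$ and $Am_i$, while \eqref{firstformula} asserts an equality of characters of the restricted $B$-module $(Am_i)\!\downarrow_B$ and of $(Bm_i)^{\oplus d}$, where $d=\dim A/\dim B$.

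I would dispatch \eqref{secondformula} first, since it needs no hypothesis on $A$ over $B$. The map $A\otimes_B Bm_i\to Am_i$, $a\otimes bm_i\mapsto abm_i$, is an isomorphism of left $A$-modules --- it is just $A\otimes_B(-)$ applied to the summand $Bm_i$ of the free rank-one module ${}_BB=Bm_i\oplus B(1-m_i)$. Comparing characters of the two sides gives $(\sum_{\alpha\in\mathcal{B}_i}\alpha(1)\alpha)\!\uparrow^A=\sum_{\chi\in\mathcal{A}_i}\chi(1)\chi$, which is \eqref{secondformula}.

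For \eqref{firstformula} I would use that $A$ is free of rank $d=\dim A/\dim B$ as a left $B$-module, so ${}_BA\cong B^{\oplus d}$. Because $m_i$ is a central idempotent of $B$, the functor $V\mapsto m_iV$ on left $B$-modules is exact and additive, sends ${}_BB$ to $Bm_i$ and ${}_BA$ to $Am_i$, hence sends the isomorphism $A\cong B^{\oplus d}$ to an isomorphism $Am_i\cong(Bm_i)^{\oplus d}$ of left $B$-modules. Restricting the $A$-module $Am_i$ to $B$ and reading off characters yields $(\sum_{\chi\in\mathcal{A}_i}\chi(1)\chi)\!\downarrow_B=d\sum_{\alpha\in\mathcal{B}_i}\alpha(1)\alpha$, i.e.\ \eqref{firstformula}.

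The genuinely non-formal step, and the place where the hypothesis that $A$ be $B$-free (rather than merely $\dim A/\dim B\in\Z$) is used, is the block-wise descent $Am_i\cong(Bm_i)^{\oplus d}$: the freeness of $A$ over $B$ must be transported through the central idempotent $m_i$ to each block separately. Here the matching of blocks on the two sides is exactly what \eqref{mformula} provides, since $m_i=\sum_{\chi\in\mathcal{A}_i}e_\chi=\sum_{\alpha\in\mathcal{B}_i}f_\alpha$ guarantees that $Am_i$ collects precisely the constituents $\chi\in\mathcal{A}_i$ and $Bm_i$ precisely the $\alpha\in\mathcal{B}_i$; once this is in hand, both character identities fall out of the two module isomorphisms above.
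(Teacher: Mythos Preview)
Your proof is correct and takes a genuinely different route from the paper's. The paper argues character-theoretically: it writes $\chi_i:=\sum_{\chi\in\mathcal{A}_i}\chi(1)\chi$ and $\alpha_i:=\sum_{\alpha\in\mathcal{B}_i}\alpha(1)\alpha$, observes that $\rho_A=\sum_i\chi_i$, $\rho_B=\sum_i\alpha_i$, and that freeness gives $\rho_A\!\downarrow_B=\tfrac{\dim A}{\dim B}\,\rho_B$; then it uses \eqref{mformula} to see that $\chi_i(f_\alpha)=0$ for $\alpha\in\mathcal{B}_j$ with $j\neq i$, which forces $\chi_i\!\downarrow_B$ to have constituents only in $\mathcal{B}_i$, yielding \eqref{firstformula}. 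Finally \eqref{secondformula} is deduced from \eqref{firstformula} by Frobenius reciprocity together with $\rho_B\!\uparrow^A=\rho_A$.

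Your argument instead realizes $\chi_i$ and $\alpha_i$ as the characters of the concrete modules $Am_i$ and $Bm_i$ and then produces explicit module isomorphisms: $A\otimes_B Bm_i\cong Am_i$ for \eqref{secondformula}, and $m_i({}_BA)\cong m_i({}_BB^{\oplus d})$, i.e.\ $Am_i\cong (Bm_i)^{\oplus d}$, for \eqref{firstformula}. This is cleaner in that the two formulas become independent of one another, and it exposes an extra piece of information the paper's proof obscures: your derivation of \eqref{secondformula} uses only that $m_i$ is a central idempotent of $B$ lying in $\mathcal{Z}(A)$, so \eqref{secondformula} holds without any freeness assumption on ${}_BA$, whereas in the paper \eqref{secondformula} is obtained through \eqref{firstformula} and hence appears to rely on freeness. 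The paper's approach, on the other hand, has the virtue of making the orthogonality $\chi_i(f_\alpha)=\alpha(1)\,m_B(\chi_i\!\downarrow,\alpha)=0$ for $\alpha\notin\mathcal{B}_i$ completely explicit, which is the form used later in the section.
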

\begin{proof}
For each partition set $\mathcal{A}_i$ let $\chi_i=\sum_{ \chi \in \mathcal{A}_i}\chi(1)\chi$. The regular character $\rho_A$ of $A$ is given by $\rho_A=\sum_{i=1}^t\chi_i$. Since $A$ is free as left $B$-module it follows that $\rho_A\downarrow_B^A=\frac{\dim A}{\dim B}\rho_B$ where $\rho_B$ is the regular character of $B$. Similarly for each partition set $\mathcal{B}_i$ let $\alpha_i=\sum_{ \alpha \in \mathcal{B}_i}\alpha(1)\alpha$. Therefore $\rho_B=\sum_{i=1}^t\alpha_i$. From  Eq.~(\ref{mformula}) it follows that if $i\neq j$ then $\chi_i(m_j)=0$. This implies that $\chi_i(f_{\alpha})=0$
for all $j \neq i$ and $\alpha \in \mathcal{B}_j$. Since $\chi_i(f_{\alpha})=\alpha(1)m_B(\chi_i\downarrow^A_B,\;\alpha)$ one gets Eq.~(\ref{firstformula}). Since $\rho_B\uparrow^A_B=\rho_A$, Eq.~(\ref{secondformula}) follows using Frobenius reciprocity and Eq.~(\ref{firstformula}).
\end{proof}
\subsection{}\label{you}
Define a relation on $\mathrm{Irr}(A)$ by $\chi \sim \mu$ if $\chi\downarrow^A_B$ and $\mu\downarrow^A_B$ have a common constituent. This is not transitive but we can take the transitive closure. Thus $\chi \approx \mu $ if and only if there are irreducible $A$-characters $\mu_0,\cdots,\mu_r$ such that $\chi=\mu_0\sim \mu_1\sim\cdots \sim \mu_r=\mu$. This is an equivalence relation denoted by $u^A_B$.

\subsection{}
Let $\alpha$ be an irreducible $B$-character. 
Then all the irreducible constituents of $\alpha\uparrow^A_B$ are in the same equivalence class of $u^A_B$ (they are $\sim$ related). 
Let $\alpha \sim \beta$ be two irreducible characters of $B$. 
This means that there is an irreducible $\chi$ of $\mathrm{Irr}(A)$ such that $\alpha,\; \beta$ are irreducible constituents of $\chi\downarrow^A_B$. 
By Frobenius reciprocity $\chi$ is a common constituent for $\alpha\uparrow^A_B$ and $\beta\uparrow^A_B$. 
Thus all the irreducible constituents of $\alpha\uparrow^A_B$ and $\beta\uparrow^A_B$ are $u^A_B$-equivalent. This implies by transitivity that if $\alpha \approx \beta$ then all the irreducible constituents of $\alpha\uparrow^A_B$ and those of $\beta\uparrow^A_B$ are equivalent.

\subsection{} A similar argument to the one just given shows that  characters equivalent by $u^A_B$ are under restriction of characters equivalent by $d^A_B$.

\begin{prop}\label{samepart}Suppose that $B\subset A$ is an inclusion of multimatrix algebras with $A$ free as left $B$-module.
Then the equivalence classes of $u^A_B$ are the sets $\mathcal{A}_i$. The equivalence classes of $d_B^A$ are the sets $\mathcal{B}_i$.
\end{prop}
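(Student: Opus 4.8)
The plan is to prove that the two partitions of $\mathrm{Irr}(A)$ and $\mathrm{Irr}(B)$ — the one cut out by the primitive idempotents $m_i$ of $\mathcal{Z}(A)\cap B$, and the one cut out by the equivalence relations $u^A_B$ and $d_B^A$ — coincide, by showing (i) every equivalence class is contained in a single block $\mathcal{A}_i$, resp.\ $\mathcal{B}_i$, and (ii) no block is properly subdivided. The engine for (i) is the observation that if $\chi\in\mathcal{A}_i$ then every irreducible constituent of $\chi\downarrow^A_B$ lies in $\mathcal{B}_i$. To see this I would use the two expressions $m_i=\sum_{\chi\in\mathcal{A}_i}e_\chi=\sum_{\alpha\in\mathcal{B}_i}f_\alpha$ from \eqref{mformula}: the first shows that $m_i$ acts as the identity on $V_\chi$ whenever $\chi\in\mathcal{A}_i$; since $m_i\in B$ it acts the same way on $V_\chi\downarrow_B$, while the second shows that on any $B$-module $m_i$ is the projection onto the isotypic components labelled by $\mathcal{B}_i$. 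Hence every constituent $\alpha$ of $V_\chi\downarrow_B$ must satisfy $\alpha\in\mathcal{B}_i$.

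Granting this, direction (i) is immediate. If $\alpha\sim\beta$ under $d_B^A$, witnessed by a simple module whose label $\chi$ lies in some $\mathcal{A}_i$, then $\alpha,\beta\in\mathcal{B}_i$; taking transitive closures shows each $d_B^A$-class lies in a single $\mathcal{B}_i$. Dually, if $\chi\sim\mu$ under $u^A_B$ they share a constituent $\alpha$ of their restrictions, and the observation places $\chi$ and $\mu$ in the common block determined by $\alpha$, so each $u^A_B$-class lies in a single $\mathcal{A}_i$.

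The real content is direction (ii), for which I would exploit the primitivity of $m_i$. Fix a $d_B^A$-class $C\subseteq\mathcal{B}_i$ and put $g=\sum_{\alpha\in C}f_\alpha\in\mathcal{Z}(B)$. The crucial claim is that $g$ is in fact central in $A$. For any $\chi\in\mathrm{Irr}(A)$ the constituents of $V_\chi\downarrow_B$ are pairwise $\sim$-related (take $V=V_\chi$ in the definition of $d_B^A$), hence all lie in one $d_B^A$-class; therefore $g$ acts on $V_\chi$ as the identity when that class is $C$ and as zero otherwise. An element of $B\subseteq A$ acting as the scalar $0$ or $1$ on every simple $A$-module is a central idempotent of $A$, so $g\in\mathcal{Z}(A)\cap B$. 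I expect verifying that $g$ re-enters $\mathcal{Z}(A)\cap B$ to be the main obstacle of the argument. Now as $C$ runs over the $d_B^A$-classes inside $\mathcal{B}_i$ the corresponding idempotents are orthogonal and sum to $m_i$; primitivity of $m_i$ forces a single such class, so $C=\mathcal{B}_i$ and each $\mathcal{B}_i$ is exactly one $d_B^A$-class.

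Finally the statement for $u^A_B$ follows. With $C=\mathcal{B}_i$ the idempotent above is $m_i=\sum_{\chi\in K}e_\chi$ for $K=\{\chi\in\mathrm{Irr}(A):\chi\downarrow^A_B\text{ has a constituent in }\mathcal{B}_i\}$, so comparison with $m_i=\sum_{\chi\in\mathcal{A}_i}e_\chi$ gives $K=\mathcal{A}_i$. By the preceding subsection, induction carries the single $d_B^A$-class $\mathcal{B}_i$ into a single $u^A_B$-class, which by Frobenius reciprocity contains $K=\mathcal{A}_i$; together with direction (i), which confines each $u^A_B$-class to one $\mathcal{A}_j$, this proves that $\mathcal{A}_i$ is exactly one $u^A_B$-class, completing the proof. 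The freeness of $A$ over $B$ enters only through Proposition~\ref{div1}, whose restriction formula already records that the constituents occurring in the restrictions of the characters in $\mathcal{A}_i$ are exactly those in $\mathcal{B}_i$; the decisive ingredient, however, is the primitivity of the $m_i$.
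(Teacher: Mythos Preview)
Your proof is correct, and reaches the same decisive step as the paper --- that for each $d_B^A$-class $C$ the idempotent $g=\sum_{\alpha\in C}f_\alpha$ lies in $\mathcal{Z}(A)\cap B$ --- but by a shorter route. The paper arrives at centrality of $g$ via the back-and-forth of the two subsections preceding the proposition (induction carries a $d_B^A$-class into a $u_B^A$-class and restriction carries it back), together with the freeness hypothesis through the regular-character identity $\rho_A\downarrow_B=\frac{\dim A}{\dim B}\,\rho_B$ to see that this correspondence is tight. You bypass all of this with the single observation that the constituents of any one $V_\chi\downarrow_B$ are already pairwise $\sim$-related, hence lie in one $d_B^A$-class, so $g$ acts as $0$ or $1$ on every simple $A$-module. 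Your argument therefore never invokes freeness, which (as you remark) is consistent: the proposition holds for any inclusion of multimatrix algebras, and the freeness is only genuinely used elsewhere (Proposition~\ref{div1}). The paper's approach has the virtue of simultaneously exhibiting the bijection between $d_B^A$- and $u_B^A$-classes; yours has the virtue of being self-contained and of isolating precisely where primitivity of the $m_i$ does the work.
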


\begin{proof}
The previous two subsections imply that an equivalence class of $d_B^A$ induced to $A$ has all the constituents inside an equivalence class of $u_B^A$. Conversely, any equivalence class on $u_B^A$ restricted to $B$ has all the constituents inside an equivalence class of $d_B^A$. Since the regular character of $A$ is a sum of copies of the regular character of $B$,  it follows that the above restriction covers entirely the equivalence class. This implies that for any equivalence class $\mathcal{C}$ of $d^A_B$ the idempotent $\sum_{\alpha \in \mathcal {C}}f_{\alpha}$ is central in $A$. This finishes the proof.
\end{proof}

\begin{cor} \label{dim}Suppose that $B\subset A$ is an inclusion of multimatrix algebras with $A$ free as left $B$-module. Then the number of equivalence classes of $u^A_B$ is the same as the number of equivalence classes of $d^A_B$ and it equals $\mathrm{dim}\, (\mathcal{Z}(A)\cap B)$.
\end{cor}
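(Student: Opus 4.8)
The plan is to read off both counts directly from Proposition~\ref{samepart} and then match the common value to a dimension count for the commutative algebra $\mathcal{Z}(A)\cap B$. First I would invoke Proposition~\ref{samepart}: it identifies the equivalence classes of $u^A_B$ with the partition sets $\mathcal{A}_1,\ldots,\mathcal{A}_t$ of $\mathrm{Irr}(A)$, and the classes of $d^A_B$ with the partition sets $\mathcal{B}_1,\ldots,\mathcal{B}_t$ of $\mathrm{Irr}(B)$. Both partitions are indexed by the same set $\{1,\ldots,t\}$, since by construction they are paired through the single family of idempotents $m_i$ of Eq.~(\ref{mformula}). Consequently the number of equivalence classes of $u^A_B$ equals $t$, and so does the number of equivalence classes of $d^A_B$; in particular the two numbers agree.

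It then remains to see that $t=\dim(\mathcal{Z}(A)\cap B)$. The decisive observation is that $m_1,\ldots,m_t$ of Eq.~(\ref{mformula}) form a basis of primitive idempotents of $\mathcal{Z}(A)\cap B$. Since this basis has exactly $t$ elements, $\dim(\mathcal{Z}(A)\cap B)=t$ at once. If one prefers an intrinsic justification that the basis has the claimed size, note that $\mathcal{Z}(A)\cap B$ is a finite-dimensional commutative subalgebra of $\mathcal{Z}(A)\cong\mathbb{C}^s$, and the latter has no nonzero nilpotent elements; a subalgebra of a reduced commutative algebra is again reduced, hence $\mathcal{Z}(A)\cap B$ is split semisimple and isomorphic to $\mathbb{C}^t$, where $t$ is the number of its minimal idempotents, namely $m_1,\ldots,m_t$. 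Either way the dimension equals $t$.

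Combining the two paragraphs yields the three equal numbers asserted in the statement. I do not expect a genuine obstacle here: essentially everything has been prepared in the construction of Eq.~(\ref{mformula}) and in Proposition~\ref{samepart}, so this is a bookkeeping step. The only point to keep honest is that the index $t$ appearing in the two partitions is the very same $t$ that counts $\dim(\mathcal{Z}(A)\cap B)$; this is automatic, since both partitions and the idempotent basis arise from one and the same decomposition $1=\sum_{i=1}^t m_i$ of the identity into primitive idempotents of $\mathcal{Z}(A)\cap B$.
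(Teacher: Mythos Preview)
Your proposal is correct and follows exactly the approach implicit in the paper: the corollary is stated without proof precisely because Proposition~\ref{samepart} identifies the classes of $u^A_B$ and $d^A_B$ with the $\mathcal{A}_i$ and $\mathcal{B}_i$ respectively, both indexed by the same set $\{1,\ldots,t\}$, and $t=\dim(\mathcal{Z}(A)\cap B)$ since the $m_i$ of Eq.~(\ref{mformula}) are by construction a basis of primitive idempotents. Your additional remark that $\mathcal{Z}(A)\cap B$ is reduced (hence split semisimple) is a nice sanity check but not needed, as the paper already takes the primitive idempotent basis as given.
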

\subsection{Distance between modules (characters)}
We say that the distance $d(W_i,\;W_j)$ between two modules is $m$ if $m$ is the smallest number such that there are $m-1$ intermediate simple $B$-modules with $W_i=W_{i_0} \sim W_{i_1}\sim \cdots \sim W_{i_m}=W_j$. Thus $d(W_i,\;W_j)=1$ if and only if $W_i \sim W_j$. We put $d(W_i,\;W_j)=-\infty$ if $W_i$ and $W_j$ are not equivalent under $\approx$ and $d(W_i ,\;W_i)=0$ for all $1\leq i \leq r$. Note that the distance defined here is half of the \textit{graphical distance} between the black points corresponding to $W_i$ and $W_j$ in the Bratteli diagram.

Recall the inclusion matrix $M= (m_{ij})$ for the inclusion of multimatrix algebras $B \subseteq A$ and define its
symmetric `square' $\mathcal{S}=MM^t = (s_{ij})$.  The entries of $\mathcal{S}$
and its powers, or indeed the powers $\mathcal{S}^mM$ of $M$ in the sense of the previous section, will be denoted by $(\mathcal{S}^mM^k)_{ij}$
where $k = 0,1$.
%matrix $\mathcal{S}$ entries

\begin{remark}\label{entries1}\normalfont
Let $1 \leq i \leq r$ and $1 \leq u \leq s$. Then $m_{iu}>0$ if and only if $W_i$ is a constituent of $V_u\downarrow_B$.
\end{remark}

\begin{prop}\label{prop-entries2}
Suppose $i \neq j$. One has $(\mathcal{S}^m)_{ij}>0$ if and only if $0 <d(W_i,\;W_j)\leq m.$ This is equivalent to the existence of a path of length $2m$ between $W_i$ and $W_j$.
\end{prop}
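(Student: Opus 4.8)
The plan is to interpret $\mathcal{S} = MM^t$ as a weighted adjacency matrix and to reduce the positivity of the entries of $\mathcal{S}^m$ to the existence of walks in an associated graph. First I would record the dictionary between the combinatorics of $M$ and the relation $\sim$. Since the $(i,j)$-entry of $\mathcal{S}$ is $s_{ij} = \sum_{u=1}^s m_{iu}m_{ju}$, and by Remark~\ref{entries1} a factor $m_{iu}m_{ju}$ is positive exactly when both $W_i$ and $W_j$ are constituents of $V_u\downarrow_B$, one has for $i \neq j$ that $s_{ij} > 0$ if and only if $W_i \sim W_j$. Crucially, $s_{ii} > 0$ for every $i$, because each row of $M$ is nonzero. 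Thus the positivity pattern of $\mathcal{S}$ is the adjacency matrix of a graph $\Gamma$ on the vertices $W_1, \ldots, W_r$ in which distinct $W_i, W_j$ are joined precisely when $W_i \sim W_j$, and in which every vertex carries a loop.

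Next I would expand $\mathcal{S}^m$. Because all entries of $M$, hence of $\mathcal{S}$, are non-negative, the entry $(\mathcal{S}^m)_{ij} = \sum s_{i k_1} s_{k_1 k_2} \cdots s_{k_{m-1} j}$ is a sum of non-negative terms, one for each walk $W_i = W_{k_0}, W_{k_1}, \ldots, W_{k_m} = W_j$ of length $m$ in $\Gamma$, and such a term is positive exactly when every step of its walk is an edge or a loop. Hence $(\mathcal{S}^m)_{ij} > 0$ if and only if there is a walk of length exactly $m$ from $W_i$ to $W_j$ in $\Gamma$. Unwinding $\mathcal{S}^m = (MM^t)^m$ one factor further identifies such a walk with a path $W_i \to V_{k_1} \to W_{l_1} \to \cdots \to W_j$ of length $2m$ in the Bratteli diagram, which gives the final clause of the statement and is consistent with the earlier remark that $d$ is half the graphical distance.

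It then remains to match ``walk of length exactly $m$'' with the condition $0 < d(W_i, W_j) \leq m$. For $i \neq j$ the quantity $d(W_i, W_j)$ is by definition the length of a shortest $\sim$-chain, i.e.\ the shortest-path distance in $\Gamma$ with loops ignored, and $i \neq j$ forces $d(W_i,W_j) \geq 1$. If $0 < d(W_i, W_j) = \ell \leq m$, a shortest chain is a walk of length $\ell$, which I would lengthen to exactly $m$ by prepending $m - \ell$ traversals of the loop at $W_i$; these loops are available precisely because $s_{ii} > 0$. Conversely, any walk of length $m$ from $W_i$ to $W_j$ witnesses $W_i \approx W_j$ and, after deleting loop steps, contains a $\sim$-chain of length at most $m$, so $d(W_i, W_j) \leq m$. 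The one point to handle with care --- and the only content beyond bookkeeping --- is this use of the loops: it removes any parity constraint and is responsible for the inequality $d \leq m$ rather than an equality $d = m$. An equally short alternative is induction on $m$ via $(\mathcal{S}^m)_{ij} = \sum_k (\mathcal{S}^{m-1})_{ik}s_{kj}$, in which the loop term $k = j$ supplies the passage from ``$\leq m-1$'' to ``$\leq m$''.
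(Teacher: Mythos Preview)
Your proof is correct and follows essentially the same approach as the paper: both expand $(\mathcal{S}^m)_{ij}$ as a sum $\sum_{l_1,\ldots,l_{m-1}} s_{il_1}s_{l_1l_2}\cdots s_{l_{m-1}j}$ and identify positivity with the existence of a $\sim$-chain $W_i \sim W_{l_1}\sim\cdots\sim W_{l_{m-1}}\sim W_j$, from which $d(W_i,W_j)\leq m$ is immediate. Your explicit framing in terms of a graph with loops (exploiting $s_{ii}>0$) makes the passage from ``walk of length exactly $m$'' to ``$d\leq m$'' a bit more transparent than the paper, which leaves the padding step implicit in the reflexivity of~$\sim$; but the underlying argument is the same.
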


\begin{proof}
Observe first that $s_{ij}>0$ if and only if $W_i \sim W_j$ or equivalently $d(W_i,\;W_j)=1$. Indeed $s_{ij}=\sum _{u=1}^sm_{iu}m_{ju}$. Thus $s_{ij}>0$ if and only if there is $u$ such that $m_{iu}>0$ and $m_{ju}>0$. That means that $W_i$ and $W_j$ are constituents of $V_u\downarrow_B$ and therefore $W_i \sim W_j$.

For $m>1$ note that $(\mathcal{S}^m)_{ij}=\sum_{l_1,\cdots, l_{m-1}}s_{il_1}s_{l_1l_2}\cdots s_{l_{m-1}j}$. Thus $(\mathcal{S}^m)_{ij}>0$ if and only if there are $1 \leq l_1,\cdots, l_{m-1} \leq r$ such that $W_i \sim V_{l_1}\sim  \cdots \sim V_{l_{m-1}} \sim W_j$, i.e. $d(W_i,\;W_j)\leq m$.
\end{proof}

\begin{remark} \label{nonzeros}\normalfont
For $M \in \mathcal{M}_{rs} (\R_{\geq 0})$ let $A(M)$ be the set of ordered pairs $(i,u)$ with $m_{iu}>0$. Recall that given two matrices $M, N \in \mathcal{M}_{rs} (\R_{\geq 0})$ then there is $q \in \Z_+$ such that $M\leq qN$ if and only if $A(M)\subseteq A(N)$.
\end{remark}

\begin{remark}\normalfont
 \label{supersets}
We recall a few things from Section~2.
Note that $\mathcal{S}_{ii}>0$ and therefore $(\mathcal{S}^p)_{ii}>0$ for all $p>0$. This implies that if $(\mathcal{S}^m)_{ij}>0$ then also $(\mathcal{S}^{m+p})_{ij}>0$ since $(\mathcal{S}^{m+p})_{ij}=\sum_{l=1}^r(\mathcal{S}^m)_{il}(\mathcal{S}^p)_{lj}$ and for $l=j$ both terms are positive. In terms of distance in the Bratteli diagram this is equivalent to the fact that if there is a path between $W_i$ and $W_j$ of length $2m$ then there is also a path of length $2(m+p)$ between the same points. For example, one can travel $p$-times back and forth along the last edge of the path of length $2m$. Using the notations from the previous remark this implies that $A(\mathcal{S}^m) \subseteq A(\mathcal{S}^{m+1})$ for all $m >0$.
\end{remark}

\begin{definition}
The depth of a multimatrix algebra inclusion $B \subseteq A$ is defined
to be the depth of its inclusion matrix $M$ (in terms of Section~2,
which also notes the definition is independent of the ordering in the basis of simples). A subgroup $H$ of a finite group $G$ is said to be
depth $n$ if the corresponding group algebras over $\C$ have
multimatrix algebra inclusion (via Maschke and Wedderburn theory) of depth $n$.
\end{definition}

The background for this definition is given in \cite{BK, KN, KS, KK, K1,K2}
and their bibliographies; the definition coincides with the definition of
depth introduced briefly in \cite[Burciu-Kadison]{BK}. For example, given group algebras with inclusion matrix $M$, where
$B = \C[H]$, $A = \C[G]$, subgroup $H $ of finite group $G$, we easily see
that the $(i,j)$-entry in the depth two condition $M^3 \leq qM$  is the same as the condition for depth two in \cite[Section 3]{KK} as  follows. Suppose the bases of irreducible characters are given by
$\mathrm{Irr}(G) = \{ \chi_1,\ldots,\chi_s\}$ and $\mathrm{Irr}(H) = \{ \psi_1,\ldots,\psi_r \}$.  Then $m_{ij} = \bra  \psi_i \, | \, (\chi_j)_H \ket$ and by Frobenius reciprocity $m_{ij} = \bra \psi_i^G \, | \, \chi_j \ket$.  The entries of $\mathcal{S}$ are then
$s_{ij} = \bra \psi_i^G \, | \, \psi_j^G \ket$,
since this entry is the inner product of the rows $i$ and $j$ of $M$.
Then we apply reciprocity and orthogonal expansion:
\begin{eqnarray}
 (\mathcal{S}M)_{ij} & = & \sum_k \bra \psi_i^G \, | \, \psi_k^G \ket \bra \psi_k \, | \, (\chi_j)_H \ket \nonumber \\
& = & \sum_k \bra (\psi_i^G)_H \, | \, \psi_k\ket \bra \psi_k \, | \, (\chi_j)_H  \ket \nonumber \\
& = & \bra (\psi_i^G)_H \, | \, (\chi_j)_H \ket = \bra ((\psi_i^G)_H)^G\, | \, \chi_j \ket
\end{eqnarray}

\begin{theorem}\label{odddepth}
The inclusion matrix of $B \subseteq A$ satisfies a depth $2m+1$ inequality ($m \geq 1$) if and only if the distance between any two simple $B$-modules is at most $m$.
\end{theorem}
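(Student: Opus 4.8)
The plan is to reduce the depth-$2m+1$ inequality to a statement about the zero pattern of the even power $\mathcal{S}^m$, and then translate that pattern into distances using Proposition~\ref{prop-entries2}. First I would observe that since $n = 2m+1$ is odd, the two powers occurring in the depth-$n$ inequality are the even (hence symmetric) powers $M^{n-1} = \mathcal{S}^m$ and $M^{n+1} = \mathcal{S}^{m+1}$. By Corollary~\ref{cor-stabilization} (whose underlying criterion is recorded in Remark~\ref{nonzeros}), $M$ satisfies a depth-$2m+1$ inequality precisely when $A(\mathcal{S}^m) = A(\mathcal{S}^{m+1})$; and by Remark~\ref{supersets} the inclusion $A(\mathcal{S}^m) \subseteq A(\mathcal{S}^{m+1})$ is automatic, so the entire content lies in the reverse inclusion.

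Next I would dispose of the diagonal: since $\mathcal{S}_{ii} > 0$, every pair $(i,i)$ lies in $A(\mathcal{S}^p)$ for all $p$ and so contributes nothing to the comparison. For off-diagonal pairs $i \neq j$, Proposition~\ref{prop-entries2} identifies the condition $(i,j) \in A(\mathcal{S}^p)$ with $0 < d(W_i,W_j) \leq p$. Hence the equality $A(\mathcal{S}^m) = A(\mathcal{S}^{m+1})$ is equivalent to the assertion that \emph{no} pair of simple $B$-modules lies at distance exactly $m+1$.

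The remaining step, and the one I expect to carry the real weight, is to upgrade ``no pair at distance exactly $m+1$'' to ``every pair is at distance at most $m$''. One direction is immediate: if all distances are $\leq m$ then none equals $m+1$. For the converse I would use the elementary shortest-path fact that intermediate distances are always realized: if $d(W_i,W_j) = D$ is finite, then along a shortest chain $W_i = W_{i_0} \sim \cdots \sim W_{i_D} = W_j$ one has $d(W_i, W_{i_k}) = k$ for every $0 \leq k \leq D$, the lower bound following because a shorter chain to $W_{i_k}$ would shortcut the path to $W_j$. Consequently, if some pair had finite distance $\geq m+1$, then---using $m \geq 1$---the vertex $m+1$ steps along such a chain would exhibit a pair at distance exactly $m+1$, contradicting the hypothesis. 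Pairs that are not $\approx$-related have distance $-\infty \leq m$ and never enter any $A(\mathcal{S}^p)$, so they are harmless in both directions. Assembling these three steps delivers the stated equivalence.
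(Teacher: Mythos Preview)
Your argument is correct and follows essentially the same route as the paper: both reduce the depth-$2m+1$ condition to the zero pattern of $\mathcal{S}^m$ via Corollary~\ref{cor-stabilization} and then translate via Proposition~\ref{prop-entries2}. The only minor difference is in the forward direction: the paper invokes stabilization for \emph{all} $p$ (since a depth-$n$ inequality implies depth-$(n+k)$ for every $k$, one has $A(\mathcal{S}^m)=A(\mathcal{S}^{m+p})$ for all $p$), which lets it conclude $d(W_i,W_j)\le m$ directly without your intermediate-distance lemma; your version uses only $p=1$ and compensates with the shortest-path argument. Both are entirely valid and of comparable length.
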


\begin{proof}
By definition, (cf.\ \cite[Theorem 2.5]{BK})
the inclusion matrix $M$ of $B \subseteq A$ satisfies a depth $2m+1$ inequality iff $\mathcal{S}^{m+1}\leq q\mathcal{S}^{m}$ for some $q>0$ and $\mathcal{S} = MM^t$.

Suppose that $B \subseteq A$ is of depth $2m+1$.
By Corollary~\ref{cor-stabilization}, $A(\mathcal{S}^m) = A(\mathcal{S}^{m+p})$ for all $p > 0$.  If $(\mathcal{S}^{m+p})_{ij} > 0$, then $(\mathcal{S}^m)_{ij} > 0$, so $d(W_i, W_j) \leq m$ by Prop.~\ref{prop-entries2}.
It follows that $d(W_i,W_j) \leq m$ for all pairs of simples
$W_i, W_j$ over $B$ (the distance is $-\infty$ if two simples are not in the same connected component of the inclusion diagram).

Conversely, suppose that the distance between any two simple $B$-modules is at most $m$. We have to show that $\mathcal{S}^{m+1}\leq q\mathcal{S}^{m}$ for some $q>0$ which by Remark \ref{nonzeros} is equivalent to $A(\mathcal{S}^{m+1}) \subseteq A(\mathcal{S}^{m})$. If $\mathcal{S}^{m+1}_{ij}>0$ then in the Bratteli diagram there is a path of length $2m$ between $W_i$ and $W_j$. Therefore the distance between these two points is positive (not $-\infty$) and by the assumption it should be less or equal to $m$. Thus $\mathcal{S}^{m}_{ij}>0$ by Proposition \ref{prop-entries2}.
\end{proof}

If we define the \textit{diameter} of a row of simples in an inclusion diagram to be the greatest graphical distance between simples (an even number), the theorem
says that the minimum odd depth inequality satisfied by the
inclusion matrix of $B \subseteq A$ is one plus the diameter of the simples of $B$ in its inclusion diagram.

\begin{cor}\label{d3}
The inclusion $B \subseteq A$ is of depth $\leq 3$ if and only if $\sim$ is an equivalence relation.
\end{cor}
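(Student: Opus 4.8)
The plan is to read off both directions directly from Theorem~\ref{odddepth} in the special case $m = 1$, so that the whole statement becomes a dictionary between a metric condition on the simples of $B$ and a structural property of the relation $\sim$. First I would observe that being of ``depth $\leq 3$'' means precisely that the inclusion matrix $M$ satisfies a depth $3$ inequality: a depth $2$ matrix automatically satisfies a depth $3$ inequality (as recorded just after the Definition in Section~2), so nothing is lost by working with the inequality $\mathcal{S}^2 \leq q\mathcal{S}$ rather than with the exact value of the depth. Applying Theorem~\ref{odddepth} with $m = 1$ then replaces this inequality by the equivalent condition that $d(W_i, W_j) \leq 1$ for every pair of simple $B$-modules $W_i, W_j$.

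The heart of the argument is then to identify the condition ``$d(W_i, W_j) \leq 1$ for all $i,j$'' with ``$\sim$ is an equivalence relation''. Since $\sim$ is already reflexive and symmetric, I would reduce this to showing that the distance bound is equivalent to transitivity of $\sim$, equivalently to $\sim$ coinciding with its transitive closure $\approx$. In the forward direction I would start from $W_i \sim W_j$ and $W_j \sim W_k$: the exceptional case $i = k$ is handled by reflexivity, while for $i \neq k$ the two-step chain makes $W_i$ and $W_k$ both $\approx$-equivalent and distinct, so $d(W_i,W_k)$ is a positive integer, which the hypothesis forces to equal $1$, i.e.\ $W_i \sim W_k$. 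Conversely, if $\sim = \approx$, then any two $\approx$-equivalent simples are already $\sim$-related and hence at distance $1$, non-equivalent ones are at distance $-\infty$, and equal ones at distance $0$, so the bound $d \leq 1$ holds in every case.

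The step that most needs care—rather than being a genuine obstacle—is the bookkeeping of the exceptional values of the distance function. The definition assigns $d(W_i,W_j) = -\infty$ to pairs lying in different connected components of the inclusion diagram and $d(W_i,W_i) = 0$ on the diagonal, and both of these are harmlessly $\leq 1$; the only pairs carrying information are the distinct pairs inside a common component, where finiteness of $d$ is exactly $\approx$-equivalence and the value $1$ is exactly $\sim$-relatedness. Once this is kept straight, the corollary is immediate from Theorem~\ref{odddepth}, with reflexivity of $\sim$ (which holds because $M$ is irredundant, every row being nonzero) supplying the trivial cases in the transitivity argument.
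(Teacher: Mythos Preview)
Your proposal is correct and follows essentially the same route as the paper's proof: both invoke Theorem~\ref{odddepth} with $m=1$ to translate depth $\leq 3$ into the condition $d(W_i,W_j)\leq 1$ for all $i,j$, and then verify that this distance bound is equivalent to transitivity of $\sim$. Your treatment is slightly more explicit about the edge cases (the diagonal $i=k$ in the transitivity check, and the observation that a depth $2$ inequality implies a depth $3$ one), but the substance is identical.
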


\begin{proof}
Suppose that $B \subseteq A$ is of depth $\leq 3$. By the above theorem the distance between any two modules is less or equal to $1$. Thus this distance is either $-\infty$ or $1$. If $W_i \sim W_j$ and $W_j \sim W_k$ then $0<d(W_i,\;W_k)\leq 2$. The assumption implies $d(W_i,\;W_k)=1$, i.e. $W_i \sim W_k$. This proves that $\sim$ is transitive.

Conversely suppose $\sim$ is transitive. Then this implies that the distance between any two modules is less or equal to $1$. It is $1$ if the modules are equivalent under $\sim$ and $-\infty$ if they are not equivalent. The above theorem implies that $B \subseteq A$ is of depth $\leq 3$.
\end{proof}

Let $V_u$ be a simple $A$-module with $1\leq u \leq s$. The irreducible constituents of $V_u\downarrow_B$ are all inside of one equivalence class of $\approx$. Denote the set of these constituents by $\mathcal{V}_u$. The distance between a simple $B$-module $W_i$ and the set $\mathcal{V}_u$ is defined as usually, by the minimal distance between $W_i$ and any element of the set $\mathcal{V}_u$. Thus \begin{equation} d(W_i,\; \mathcal{V}_u)=\min_{ _{W_j \in \mathcal{V}_u}}d(W_i,\;W_j).\end{equation}

\begin{definition}\label{defn}
We define $m(V_u)$ to be the maximal distance between any simple $B$-module $W_i$ and the set $\mathcal{V}_u$.
\end{definition}

Note that remark \ref{entries1} can now be written as $m_{iu}>0$ if and only if $W_i \in \mathcal{V}_u$.

\begin{prop}\label{stimesm}
Let $m \geq 1$. Then $(\mathcal{S}^mM)_{iu}>0$ if and only if $0<d(W_i,\; \mathcal{V}_u)\leq m$.
\end{prop}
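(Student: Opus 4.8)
The plan is to prove the claim by expressing $(\mathcal{S}^m M)_{iu}$ as a sum of nonnegative terms over paths and then translating positivity of this sum into the distance condition, exactly paralleling the proof of Proposition~\ref{prop-entries2} but with one extra factor of $M$ on the right. First I would write out the matrix product explicitly:
\begin{equation}
(\mathcal{S}^m M)_{iu} = \sum_{l_1,\ldots,l_m} s_{i l_1} s_{l_1 l_2}\cdots s_{l_{m-1} l_m} m_{l_m u},
\end{equation}
where each sum index runs over $1,\ldots,r$. Since every factor is a nonnegative integer, the total is positive if and only if there exist indices $l_1,\ldots,l_m$ making every factor positive simultaneously. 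By the characterization in Remark~\ref{entries1} (as restated just before the proposition) the final factor $m_{l_m u}>0$ means exactly $W_{l_m}\in\mathcal{V}_u$, and by the opening observation of Proposition~\ref{prop-entries2} each factor $s_{ab}>0$ means $W_a\sim W_b$. Hence $(\mathcal{S}^m M)_{iu}>0$ iff there is a chain $W_i = W_{l_0}\sim W_{l_1}\sim\cdots\sim W_{l_m}$ with $W_{l_m}\in\mathcal{V}_u$.

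The forward direction then follows: such a chain exhibits a simple module $W_{l_m}\in\mathcal{V}_u$ with $d(W_i,W_{l_m})\le m$, and since $W_{l_m}\in\mathcal{V}_u$ we get $d(W_i,\mathcal{V}_u)\le d(W_i,W_{l_m})\le m$. The inequality $d(W_i,\mathcal{V}_u)>0$ needs a short separate argument, and this is where I expect the only real subtlety to lie, because the distance being $>0$ is \emph{not} automatic from the chain: if $W_i$ itself lies in $\mathcal{V}_u$ then $d(W_i,\mathcal{V}_u)=0$. So the claim as worded is asserting that $(\mathcal{S}^m M)_{iu}>0$ forces $W_i\notin\mathcal{V}_u$, which cannot be right in general (a $W_i\in\mathcal V_u$ can still be $\sim$-linked to another element of $\mathcal V_u$). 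The honest resolution is that the strict lower bound must be read as saying the left side vanishes precisely on the pairs where no positive-length chain reaches $\mathcal{V}_u$; concretely, I would prove the clean equivalence $(\mathcal{S}^m M)_{iu}>0 \iff$ there is a path of length $\le 2m+1$ from $W_i$ into $\mathcal{V}_u$, i.e. $d(W_i,\mathcal{V}_u)\le m$ when $W_i\notin\mathcal V_u$, and note that the case $W_i\in\mathcal V_u$ is governed instead by $m_{iu}$.

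For the converse direction (assuming $0<d(W_i,\mathcal{V}_u)\le m$), I would unwind the definition of $d(W_i,\mathcal{V}_u)$: there is some $W_j\in\mathcal{V}_u$ realizing the minimum, so $0<d(W_i,W_j)=d\le m$. By Proposition~\ref{prop-entries2} applied to $\mathcal{S}^{d}$ (note $i\neq j$ since the distance is positive), we get $(\mathcal{S}^{d})_{ij}>0$, and by the monotonicity observed in Remark~\ref{supersets} this yields $(\mathcal{S}^{m})_{ij}>0$. Then
\begin{equation}
(\mathcal{S}^m M)_{iu} = \sum_{l} (\mathcal{S}^m)_{il}\, m_{lu} \ge (\mathcal{S}^m)_{ij}\, m_{ju} > 0,
\end{equation}
since $W_j\in\mathcal{V}_u$ gives $m_{ju}>0$ by Remark~\ref{entries1}. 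The main obstacle, as flagged, is reconciling the strict inequality $0<d(W_i,\mathcal V_u)$ in the statement with the possibility $W_i\in\mathcal V_u$; I would address this by treating the two cases ($W_i\in\mathcal{V}_u$ versus $W_i\notin\mathcal{V}_u$) separately, showing that under the stated hypothesis $d(W_i,\mathcal V_u)>0$ one necessarily has $W_i\notin\mathcal V_u$ when $\mathcal V_u$ is a singleton, and otherwise reading the bound as the path-existence condition above, which makes the equivalence exact.
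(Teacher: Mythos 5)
Your argument is correct and, modulo the extra bookkeeping with full path sums, it is essentially the paper's own proof: the paper likewise expands $(\mathcal{S}^mM)_{iu}=\sum_{l=1}^r(\mathcal{S}^m)_{il}m_{lu}$ and invokes Proposition~\ref{prop-entries2} together with Remark~\ref{entries1} in both directions. Your detour in the converse through $\mathcal{S}^{d}$ followed by the monotonicity of Remark~\ref{supersets} is harmless but unnecessary, since Proposition~\ref{prop-entries2} already yields $(\mathcal{S}^m)_{ij}>0$ directly from $0<d(W_i,W_j)\le m$.

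The edge case you flag is genuine, and it is the one point where you are more careful than the paper. If $W_i\in\mathcal{V}_u$, i.e.\ $m_{iu}>0$, then $(\mathcal{S}^mM)_{iu}\ge(\mathcal{S}^m)_{ii}\,m_{iu}>0$ while $d(W_i,\mathcal{V}_u)=0$, so the biconditional as printed fails. Correspondingly, the paper's forward argument tacitly applies Proposition~\ref{prop-entries2} to the index $l$ without excluding $l=i$ (that proposition assumes $i\neq j$), and its conclusion ``$0<d(W_i,\mathcal{V}_u)$'' does not follow even when $l\neq i$, because the minimum over $\mathcal{V}_u$ can still be $0$ if $W_i$ itself lies in $\mathcal{V}_u$. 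Your repaired reading --- $(\mathcal{S}^mM)_{iu}>0$ if and only if $d(W_i,\mathcal{V}_u)\neq-\infty$ and $d(W_i,\mathcal{V}_u)\le m$ --- is the correct statement, and it is exactly what Theorem~\ref{evendepth} needs: in the $d=0$ case the required positivity of $(\mathcal{S}^{m-1}M)_{iu}$ holds anyway from $m_{iu}>0$ and $(\mathcal{S}^{m-1})_{ii}>0$, so the applications are unaffected. One small correction to your closing paragraph: the restriction to singleton $\mathcal{V}_u$ is beside the point, since $d(W_i,\mathcal{V}_u)>0$ forces $W_i\notin\mathcal{V}_u$ for \emph{any} $\mathcal{V}_u$; the hypothesis $0<d(W_i,\mathcal{V}_u)$ in the converse direction is consistent as stated, and only the forward implication requires the amended formulation.
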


\begin{proof}
Suppose that $(\mathcal{S}^mM)_{iu}>0$. Since $(\mathcal{S}^mM)_{iu}=\sum_{l=1}^r(\mathcal{S}^m)_{il}m_{lu}$ there is an $1 \leq l \leq r$ such that $(\mathcal{S}^m)_{il}>0$ and $m_{lu}>0$. Proposition~\ref{prop-entries2} and Remark~\ref{entries1} imply that $d(W_i,\;W_l)\leq m$ and $W_l \in \mathcal{V}_u$. Thus $0<d(W_i,\; \mathcal{V}_u)\leq m$.

Conversely, if $0<d(W_i,\; \mathcal{V}_u)\leq m$ then there is an $1 \leq l\leq r$ such that $d(W_i,\; W_l)\leq m$ and $W_l \in \mathcal{V}_u$. This implies that $(\mathcal{S}^m)_{il}>0$ and $m_{lu}>0$ which together give that $(\mathcal{S}^mM)_{iu}>0$.
\end{proof}

%\begin{remark} Similar to Remark \ref{supersets} it can be shown that $A(\mathcal{S}^mM) \subseteq A(\mathcal{S}^{m+1}M)$ for all $m \geq 1$.
%\end{remark}
\begin{theorem}
\label{evendepth}
The inclusion matrix of $B \subseteq A$ satisfies a depth $2m$ inequality (with {\bf $m \geq 2$}) if and only if $m(V_u)\leq m-1$ for any simple $A$-module $V_u$.
\end{theorem}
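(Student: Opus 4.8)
The plan is to reduce everything to the nonzero-pattern sets $A(\cdot)$ and then read off the geometry via Proposition~\ref{stimesm}. Recall from Section~2 that the depth $2m$ inequality $M^{2m+1}\le qM^{2m-1}$ is, after writing $M^{2m+1}=\mathcal{S}^mM$ and $M^{2m-1}=\mathcal{S}^{m-1}M$, the statement $\mathcal{S}^mM\le q\mathcal{S}^{m-1}M$ for some $q\in\Z_+$; by Remark~\ref{nonzeros} this holds for some $q$ if and only if $A(\mathcal{S}^mM)\subseteq A(\mathcal{S}^{m-1}M)$. (The reverse inclusion $A(\mathcal{S}^{m-1}M)\subseteq A(\mathcal{S}^mM)$ is automatic from the ascending chain of nonzero patterns recorded just before Corollary~\ref{cor-stabilization}, so the condition is really the stabilization $A(\mathcal{S}^{m-1}M)=A(\mathcal{S}^mM)$.) Since $m\ge 2$, both $m$ and $m-1$ are $\ge 1$, so Proposition~\ref{stimesm} applies to each side and gives, for every pair $(i,u)$,
\[
(\mathcal{S}^mM)_{iu}>0 \iff 0<d(W_i,\mathcal{V}_u)\le m,\qquad (\mathcal{S}^{m-1}M)_{iu}>0 \iff 0<d(W_i,\mathcal{V}_u)\le m-1.
\]
So the theorem becomes the combinatorial claim that $A(\mathcal{S}^mM)\subseteq A(\mathcal{S}^{m-1}M)$ holds if and only if no pair $(i,u)$ has $d(W_i,\mathcal{V}_u)=m$, and I must match the latter to $m(V_u)\le m-1$ for all $u$.

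For the easy direction ($\Leftarrow$), I assume $m(V_u)\le m-1$ for every simple $A$-module $V_u$. If $(i,u)\in A(\mathcal{S}^mM)$ then $0<d(W_i,\mathcal{V}_u)\le m$ by Proposition~\ref{stimesm}, but by the definition of $m(V_u)$ we also have $d(W_i,\mathcal{V}_u)\le m(V_u)\le m-1$; hence $0<d(W_i,\mathcal{V}_u)\le m-1$ and so $(i,u)\in A(\mathcal{S}^{m-1}M)$, again by Proposition~\ref{stimesm}. This gives the required inclusion, hence the depth $2m$ inequality.

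For the converse ($\Rightarrow$) I would argue by contradiction: suppose the inequality holds but some $V_u$ has $m(V_u)\ge m$. Then there is a simple $W_i$ with $d(W_i,\mathcal{V}_u)=k\ge m$ (finite, since the value $-\infty$ never contributes to $m(V_u)$). The crucial step is to manufacture from $W_i$ a simple at distance \emph{exactly} $m$ from $\mathcal{V}_u$: choose a geodesic $W_i=W_{j_0}\sim W_{j_1}\sim\cdots\sim W_{j_k}$ with $W_{j_k}\in\mathcal{V}_u$ and observe that along such a shortest chain one has $d(W_{j_l},\mathcal{V}_u)=k-l$ for each $l$. Indeed $d(W_{j_l},\mathcal{V}_u)\le k-l$ by following the tail of the chain, while the triangle inequality for $d$ on the connected component gives $k\le l+d(W_{j_l},\mathcal{V}_u)$, forcing equality. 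Taking $l=k-m\ge 0$ yields a simple $W_{j_{k-m}}$ with $d(W_{j_{k-m}},\mathcal{V}_u)=m$; by Proposition~\ref{stimesm} this places $(j_{k-m},u)$ in $A(\mathcal{S}^mM)$ but not in $A(\mathcal{S}^{m-1}M)$, contradicting the depth $2m$ inequality. Hence $m(V_u)\le m-1$ for all $u$.

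The main obstacle I anticipate is exactly this geodesic step in the converse: one must use that $d$ behaves like a genuine graph metric (satisfying the triangle inequality on each connected component of the inclusion diagram), so that a simple at distance $\ge m$ from $\mathcal{V}_u$ always ``shadows'' one at distance precisely $m$. Everything else is a mechanical translation through Proposition~\ref{stimesm} and Remark~\ref{nonzeros}. I would also keep an eye on the bookkeeping of $-\infty$ distances, i.e.\ disconnected components of the Bratteli diagram: such pairs give zero entries in all powers $\mathcal{S}^pM$ and so are irrelevant both to the $A$-sets and to the maximum defining $m(V_u)$.
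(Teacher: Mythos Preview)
Your proof is correct and follows essentially the same route as the paper: both directions hinge on translating the depth $2m$ inequality into $A(\mathcal{S}^mM)\subseteq A(\mathcal{S}^{m-1}M)$ via Remark~\ref{nonzeros}, and then reading off distances via Proposition~\ref{stimesm}. The backward direction is identical to the paper's. In the forward direction there is a small technical difference: where you walk along a geodesic to locate a simple at distance exactly $m$ from $\mathcal{V}_u$ (using the triangle inequality for $d$), the paper instead iterates the matrix inequality to obtain $\mathcal{S}^{m+p}M\le q^{p+1}\mathcal{S}^{m-1}M$ and applies Proposition~\ref{stimesm} directly at the original distance $m+p$. Both devices accomplish the same thing; the paper's avoids invoking the metric structure explicitly, while yours is a bit more geometric. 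Your remarks about the $-\infty$ case are also handled correctly.
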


\begin{proof}
Recall that the inclusion matrix $M$ satisfies a depth $2m$ inequality iff $\mathcal{S}^mM\leq q\mathcal{S}^{m-1}M$.

Suppose that the inclusion is depth $2m$. Therefore $\mathcal{S}^mM\leq q\mathcal{S}^{m-1}M$ for some $q >0$. By induction one can prove that $\mathcal{S}^{m+p}M\leq q^{p+1}\mathcal{S}^{m-1}M$ (multiplying with $\mathcal{S}$ to the left). Suppose that $m(V_u) = m+p$ with $p \geq 0$ and some $u$. This implies that there is $W_i$ a simple $B$-module such that its distance to $\mathcal{V}_u$ is $m+p$. Proposition \ref{stimesm} implies that $(\mathcal{S}^{m+p}M)_{iu}>0$. From remark \ref{nonzeros} one has $A(\mathcal{S}^{m+p}M)\subseteq q^{p+1}A(\mathcal{S}^{m-1}M)$. Thus $(\mathcal{S}^{m-1}M)_{iu}>0$ and Proposition \ref{stimesm} implies that $d(W_i,\;\mathcal{V}_u)\leq m-1$. This is a contradiction and the proof in one direction is completed.

Conversely suppose that $m(\mathcal{V}_u)\leq m-1$ for any simple $A$-module $V_u$. We have to show that $\mathcal{S}^mM\leq q\mathcal{S}^{m-1}M$ which by Remark \ref{nonzeros}
is equivalent to $A(\mathcal{S}^mM)\subseteq A(\mathcal{S}^{m-1}M)$. Suppose $(i,\;u) \in A(\mathcal{S}^{m}M)$. Then $(\mathcal{S}^mM)_{iu}>0$ and Proposition \ref{stimesm} implies that $ 0<d(W_i ,\; \mathcal{V}_u)\leq m$. The assumption of the theorem implies that this distance should be less or equal to $m-1$. Then Proposition \ref{stimesm} implies that $(\mathcal{S}^{m-1}M)_{iu}>0$. Thus $(i,\;u) \in A(\mathcal{S}^{m-1}M)$.
\end{proof}

We note that in terms of graphical distance, the minimal even depth matrix inequality satisfied by the  inclusion diagram of $B$ in $A$ is two plus the largest graphical distance of a $B$-simple from an equivalence class
of $B$-simples under one $A$-simple.

\begin{example}\normalfont
The inclusion (or Bratteli) diagram starting with $B= \C[S_2] \subset A = \C[S_3]$
at the bottom level, and proceeding to its semisimple pair $A \hookrightarrow \End A_B$ via $\lambda$ at the top level is shown below:
%$$
%\begin{array}{rcccccccl}
% & & \stackrel{3}{\circ} & & & & \stackrel{3}{\circ}  & & \\
%&&&&&&&& \\
%& / & & \setminus & & / & &  \setminus  & \\
%&&&&&&&& \\
% \stackrel{1}{\bullet} & & &&\stackrel{2}{\bullet} & & & &\stackrel{1}{\bullet} \\
%&&&&&&&& \\
%& \setminus & & / & & \setminus & &  /  & \\
%&&&&&&&& \\
% & & \stackrel{\circ}{\mbox{\scriptsize 1}} & & & & \stackrel{\circ}{\mbox{\scriptsize 1}}  & &
%\end{array} $$
\[
\begin{xy}
\xymatrix{
& \overset{\displaystyle 3}{\circ} \ar@{-}[ld] \ar@{-}[rd] &&
\overset{\displaystyle 3}{\circ} \ar@{-}[ld] \ar@{-}[rd]&\\ \mathop{\bullet}
\ar@{-}[dr]_(.15){\displaystyle 1} & &\mathop{\bullet} \ar@{-}[ld]
\ar@{-}[rd]_(.15){\displaystyle 2}&
&\mathop{\bullet}\ar@{-}[ld]^(.15){\displaystyle 1}\\ &
\mathop{\circ}\limits_{\displaystyle 1} & &
\mathop{\circ}\limits_{\displaystyle 1}&}
\end{xy}
\]

Notice that the inclusion diagram of $A \hookrightarrow E$ is the reflection
of the diagram of $B \subseteq A$ about the middle row, true in general by Morita theory \cite{GHJ}.  Applying Theorem~\ref{odddepth}, we see from the bottom graph that the graphical
distance between simples is $2$, so depth of subgroup $S_2 < S_3$
is three.  Applying Theorem~\ref{evendepth}, we see from the top graph
that the maximal distance from a simple away from a  set of two simples in $\mathcal{V}_u$ on the middle line has  graphical distance $2$, so that the depth of $A \hookrightarrow E$ is four.

By simply adding dots and the same pattern of edges to the right of the diagram, we create diagrams
(Dynkin diagrams of type $A_n$) for multimatrix algebra inclusions of arbitrary odd or even depth.  In terms of explicit inclusion mappings, the following
inclusion $B := \C^n \rightarrow A := \C \times M_2(\C)^{n-1} \times \C$
has depth $2n-1$: ($\lambda_i \in \C$, $n \geq 2$)
$$(\lambda_1, \ldots, \lambda_n) \mapsto (\lambda_1, \left( \begin{array}{cc}
\lambda_1 & 0 \\
0 & \lambda_2
\end{array} \right),  \left( \begin{array}{cc}
\lambda_2 & 0 \\
0 & \lambda_3
\end{array} \right),\ldots, \left( \begin{array}{cc}
\lambda_{n-1} & 0 \\
0 & \lambda_n
\end{array} \right), \lambda_n ) $$
while its endomorphism algebra extension $A \hookrightarrow E = M_3(\C) \times M_4(\C)^{n-2} \times M_3(\C)$ has depth $2n$: ($M_i \in M_2(\C)$)
$$ (\lambda_1,M_1,\ldots,M_{n-1}, \lambda_n) \mapsto (\left( \begin{array}{cc}
\lambda_1 & 0 \\
0 & M_1
\end{array} \right), \left( \begin{array}{cc}
M_1 & 0 \\
0 & M_2
\end{array} \right),\ldots,\left( \begin{array}{cc}
M_{n-1} & 0 \\
0 & \lambda_n
\end{array} \right) )$$
\end{example}
\bigskip
\begin{remark}\normalfont
The definition of depth may be extended to the case depth one as follows.
Define $M^0$ to be the $r \times r$ identity matrix $I$ in the depth $n$ matrix inequality condition, in which case a depth one extension of semisimple algebras $B \subseteq A$ with inclusion matrix $M$
satisfies $\mathcal{S} \leq nI$ for some positive integer $n$.  This is satisfied by a centrally projective ring extension $B \subseteq A$, defined by  ${}_BA_B \oplus * \cong {}_BB^n_B$ for some $n$,
or equivalently there are $r_i \in C_A(B)$ and $f_i \in \Hom ({}_BA_B, {}_BB_B)$ such that each $a \in A$ satisfies
$a = \sum_{i=1}^n r_i f_i(a)$.  If $A$ and $B$ are the group $\C$-algebras corresponding to $G \geq H$, a depth one
extension is for example any subgroup of the center of $G$, or $H$ is normal in $G$ with a normal complement.
\end{remark}

\subsection{Endomorphism ring theorems for depth}
We continue our study of irredundant matrices from the point of view
of depth. If $M$ is the inclusion matrix of a subalgebra pair of
semisimple algebras $B \subseteq A$, then its transpose irredundant
matrix $M^t$ is the inclusion matrix of $A \hookrightarrow \End A_B$
(via $a \mapsto \lambda_a$ where $\lambda_a(x) = ax$ for every $a,x
\in A$) by an argument that goes as follows.  It is clear that the
natural module $A_B$ is finitely generated projective; it is indeed
also a generator since the ground field has characteristic zero.
Thus $B$ and $E := \End A_B$ are Morita equivalent algebras with
context bimodules ${}_EA_B$ and $A^* := {}_B\Hom (A_B,B_B)_E$; the
$E$-simples  are then $A \otimes_B W_i$ $(i = 1,\ldots,r)$.
Restricting the $E$-simples down to $A$ and using Eq.~(\ref{ind}),
the columns of the inclusion matrix of $A \hookrightarrow E$ are the
rows of $M$.  We conclude that the inclusion matrix of $A
\hookrightarrow E$ is $M^t$.

  Thus, it is interesting to  compare the depths of $M$ and $M^t$ in the next purely matrix-theoretic theorem.

\begin{theorem} If an irredundant matrix $M$ has depth $n$,
 then $M^t$ has depth $\leq n+1$. If $n$ is even, then $M^t$ is moreover of depth $n$.
\end{theorem}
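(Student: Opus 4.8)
The engine of the proof is the fact that the \emph{odd} powers of $M$ and of $M^t$ are mutual transposes. Writing $\mathcal{S}=MM^t$ and $\mathcal{T}:=M^tM$, we have $M^{2k+1}=\mathcal{S}^kM$, while reassociating $(M^tM)^kM^t=M^t(MM^t)^k$ together with $\mathcal{S}^t=\mathcal{S}$ gives
\[ (M^t)^{2k+1}=\mathcal{T}^kM^t=M^t\mathcal{S}^k=(\mathcal{S}^kM)^t=(M^{2k+1})^t. \]
Thus for each $k$ the support $A((M^t)^{2k+1})$ is obtained from $A(M^{2k+1})$ by transposing index pairs, a bijection compatible with the inclusion ordering of supports. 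The consequence I would record first is that $M$ and $M^t$ satisfy exactly the same \emph{even} depth inequalities: by Corollary~\ref{cor-stabilization}, $M$ satisfies a depth $2m$ inequality iff $A(M^{2m-1})=A(M^{2m+1})$, and transposing both (odd) powers shows this is equivalent to $A((M^t)^{2m-1})=A((M^t)^{2m+1})$, i.e.\ to $M^t$ satisfying a depth $2m$ inequality.

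For the first assertion, put $n=\mathrm{depth}(M)$ and let $2m$ be the least even integer with $2m\geq n$; then $2m=n$ if $n$ is even and $2m=n+1$ if $n$ is odd, so in either case $2m\leq n+1$. Since a depth $n$ matrix satisfies every higher depth inequality, $M$ satisfies the depth $2m$ inequality, hence so does $M^t$ by the equivalence above, and therefore $\mathrm{depth}(M^t)\leq 2m\leq n+1$.

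Now suppose $n=2m$ is even. Here $2m=n$ is already the relevant even integer, so the previous paragraph gives the sharper bound $\mathrm{depth}(M^t)\leq n$. It remains to prove $\mathrm{depth}(M^t)\geq n$, i.e.\ that $M^t$ fails the depth $2m-1$ inequality; by Corollary~\ref{cor-stabilization} this means $A(\mathcal{T}^{m-1})\neq A(\mathcal{T}^m)$. This is exactly where the transpose identity ceases to help, since it involves the \emph{even} powers $\mathcal{T}^k$, and $\mathcal{S}^k$ (of order $r$) and $\mathcal{T}^k$ (of order $s$) are genuinely different matrices; this step is the main obstacle.

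To attack it I would translate into the graph language of Section~3. Applying Proposition~\ref{prop-entries2} to $M$ identifies the hypothesis $\mathrm{depth}(M)=2m$ (equivalently $A(\mathcal{S}^{m-1})\neq A(\mathcal{S}^m)$) with the existence of two $B$-simples at distance $m$; applying the same proposition to $M^t$ (whose associated equivalence on the $A$-simples is the relation $u^A_B$) shows that the desired $A(\mathcal{T}^{m-1})\neq A(\mathcal{T}^m)$ is equivalent to the existence of two $A$-simples at distance $\geq m$. Thus the task is to manufacture, from a length-$2m$ geodesic between two $B$-simples, a pair of $A$-simples lying at distance at least $m$ in the same bipartite Bratteli diagram. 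The delicate point — and where I expect the real difficulty — is that truncating such a geodesic to its extreme $A$-simple vertices only yields distance $\geq m-1$ between $A$-simples; closing this off-by-one gap, by controlling the endpoints and parity of a diametral geodesic so that no unit of distance is lost, is the crux of the even-depth case.
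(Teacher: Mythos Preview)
You have over-read the second sentence of the theorem. In this paper ``$M^t$ is moreover of depth $n$'' means only that $M^t$ satisfies a depth $n$ inequality, i.e.\ $\mathrm{depth}(M^t)\leq n$; it does \emph{not} assert equality. The example immediately following the theorem makes this plain: there $M$ has depth $3$ and $M^t$ has depth $4$. Apply the theorem to the irredundant matrix $N:=M^t$, which has even depth $4$; if the conclusion meant ``exactly $n$'' then $N^t=M$ would have depth $4$, contradicting the computed depth $3$. So the lower bound $\mathrm{depth}(M^t)\geq n$ you set out to prove in the even case is simply false in general, and your sketch toward it (finding two $A$-simples at distance $\geq m$) cannot be completed. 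Everything you wrote up to and including ``the sharper bound $\mathrm{depth}(M^t)\leq n$'' is already a complete proof of the theorem.

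On method: your argument is correct but longer than needed. The paper dispatches both assertions in one line each. For the first, one left-multiplies the inequality $M^{n+1}\leq qM^{n-1}$ by $M^t$; the reassociation identity you recorded, $M^t\mathcal{S}^k=\mathcal{T}^kM^t$, shows this yields $(M^t)^{n+2}\leq q(M^t)^n$ regardless of the parity of $n$. For the second ($n$ even), one simply transposes the inequality: since $n\pm 1$ are odd, your identity $(M^t)^{2k+1}=(M^{2k+1})^t$ gives $(M^t)^{n+1}\leq q(M^t)^{n-1}$ directly. Your detour through Corollary~\ref{cor-stabilization} and the equivalence of even-depth inequalities for $M$ and $M^t$ recovers exactly this, just packaged in terms of supports rather than the inequalities themselves.
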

\begin{proof} If $M^{n+1} \leq q M^{n-1}$ for some $q \in \Z_+$,
we multiply from the left by $M^t$ to obtain $(M^t)^{n+2} \leq q
(M^t)^n$, which shows that $M^t$ has depth $\leq n+1$.

If $n$ is even then the transpose of the inequality $M^{n+1} \leq q
M^{n-1}$ is the inequality $(M^t)^{n+1} \leq q (M^t)^{n-1}$.
\end{proof}
Let $E$ denote $\End A_B$ and embed $A$ in $E$ via the mapping
$\lambda$ defined above.
\begin{cor}  The subalgebra pair of  semisimple algebras $B \subseteq A$
is of depth $2n$ if and only if its endomorphism algebra extension
$A \hookrightarrow E$ is of depth $2n$.  If $B \subseteq A$ is of
depth $2n-1$, then $A \hookrightarrow E$ is of depth $ \leq 2n$.
\end{cor}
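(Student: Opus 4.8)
The plan is to deduce the corollary from the preceding theorem together with the identification, made just above, that the inclusion matrix of $A \hookrightarrow E$ is $M^t$. Consequently the depth of the endomorphism extension $A \hookrightarrow E$ is the depth of $M^t$, while the depth of $B \subseteq A$ is the depth of $M$, and both assertions become statements comparing the depths of $M$ and $M^t$.

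For the second (odd) assertion I would argue directly: if $B \subseteq A$ has depth $2n-1$, then $M$ satisfies $M^{2n} \leq q M^{2n-2}$ for some $q \in \Z_+$, and multiplying this inequality on the left by $M^t$ yields $(M^t)^{2n+1} \leq q(M^t)^{2n-1}$, a depth $2n$ inequality for $M^t$. Hence $A \hookrightarrow E$ has depth $\leq 2n$; this is exactly the first clause of the theorem applied to $M$.

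For the even equivalence the key observation is that the depth $2n$ matrix inequality is self-dual under transposition. Writing $\mathcal{S} = MM^t$, the depth $2n$ inequality for $M$ reads $\mathcal{S}^{n}M \leq q\mathcal{S}^{n-1}M$. Since $\mathcal{S}$ is symmetric and $M^t\mathcal{S}^k = (M^tM)^kM^t$, transposing this inequality gives $(M^tM)^{n} M^t \leq q (M^tM)^{n-1} M^t$, which is precisely the depth $2n$ inequality for $M^t$ (whose square is $M^tM$). As transposition is an involution preserving the entrywise ordering, the two inequalities are equivalent, so $M$ satisfies a depth $2n$ inequality if and only if $M^t$ does; this is the even biconditional, and it also recovers the even clause of the theorem.

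The point requiring the most care is that this last equivalence is genuinely an equivalence of depth $2n$ \emph{inequalities}, equivalently of the conditions $\mathrm{depth} \leq 2n$, and not of exact depths: transposition need not preserve exact even depth, since a matrix of depth $2n$ can have a transpose of depth $2n-1$, as happens for the type $A_n$ inclusions constructed above when read in the two directions. Thus the even statement is to be understood, and is cleanly proved, at the level of the depth $2n$ inequality, where the transpose argument supplies the equivalence at once.
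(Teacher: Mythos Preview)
Your argument is correct and follows exactly the paper's approach: the corollary is deduced from the preceding theorem together with the identification of $M^t$ as the inclusion matrix of $A\hookrightarrow E$, using the same left-multiplication-by-$M^t$ step for the odd case and the same transpose argument for the even case.

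Your final paragraph is a valid and worthwhile observation. The transpose argument yields precisely the equivalence of depth $2n$ \emph{inequalities} (equivalently, of the conditions ``depth $\leq 2n$''), and indeed the paper's own example immediately following the corollary---where $M$ has depth $3$ while $M^t$ has depth $4$---shows that exact even depth need not be preserved under transposition. So the biconditional must be read at the level of the depth $2n$ inequality, just as you note; the paper's proof of the theorem establishes only this, and the corollary should be understood accordingly.
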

This corollary is consistent with several general `endomorphism ring
theorems' in \cite{K1,K2} and is an improvement in the semisimple
case.
\begin{example}\normalfont
The matrix $M$ below, obtained from the inclusion matrix of $S_2 <
S_3$ (\cite{BK}), has depth three while its transpose has depth
four:
$$ M = \left(
\begin{array}{ccc}
1 & 0 & 1 \\
0 & 1 & 1
\end{array}
\right), \ MM^t = \left( \begin{array}{ccc}
2 & 1 \\
1 & 2
\end{array} \right), \ M^tM = \left( \begin{array}{ccc}
1 & 0 & 1 \\
0 & 1 & 1 \\
1 & 1 & 2
\end{array} \right) $$
This is easier to see graphically; we return to this example in the
next section.
\end{example}

It is also easy to see from the precise definition in the next
section that if $C \subseteq B$ and $B \subseteq A$ are successive
subalgebra pairs of semisimple algebras with inclusion matrices $M$
and $N$, respectively, then the inclusion matrix of the composite
subalgebra pair $C \subseteq A$ is of course $MN$.  As a simple
consequence we may note an improved  version of the embedding
theorem \cite[8.6]{K1}. We prove that any depth $n$ subalgebra pair
may be embedded in a depth two extension, depth two being an
improvement from the point of view of Galois theory (see \cite{KN,
KK, K1, K2} and papers in their bibliographies). We set up the
theorem by introducing the Jones tower above the subalgebra pair and
its endomorphism ring, $B \subseteq A \hookrightarrow E_1 := E =
\End A_B$.  The Jones tower is just obtained via iteration of the
right endomorphism ring construct:
\begin{equation}
B \subseteq A \hookrightarrow E_1  \hookrightarrow E_2
\hookrightarrow \cdots
\end{equation}
I.e., $E_2 = \End E_A$, and iterate with respect to $\lambda: E_1
\hookrightarrow E_2$ to form $E_3$, then continuing like this. Note
that $E_1$ is Morita equivalent to $B$, $E_2$ is Morita equivalent
to $A$ (the details are brought together in \cite[2.2]{BK}), so all
$E_m$'s are themselves semisimple algebras.  Then if $B \subseteq A$
has inclusion matrix $M$, $A \hookrightarrow E_1$ has inclusion
matrix $M^t$, and $E_1 \hookrightarrow E_2$ has again inclusion
matrix $M$, and so on in alternating fashion.

\begin{theorem}
\label{thm-tower}
  A depth $n$ subalgebra pair of semisimple algebras $B \subseteq A$
is embedded in  the depth two subalgebra pair of semisimple algebras
$B \hookrightarrow E_{n-2}$.
\end{theorem}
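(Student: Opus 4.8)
The plan is to identify the composite inclusion matrix of $B \hookrightarrow E_{n-2}$ as the alternating product $M^{n-1}$ of Section~2, and then to verify the depth two inequality for that matrix directly from the depth $n$ hypothesis on $M$.

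First I would record the inclusion matrix of the composite pair $B \hookrightarrow E_{n-2}$. By the multiplicativity of inclusion matrices under composition of subalgebra pairs (noted just above the statement) together with the alternating pattern $M, M^t, M, M^t, \ldots$ of the Jones tower, the matrix of $B \hookrightarrow E_{n-2}$ is the product of the first $n-1$ of these factors: there is one step $B \hookrightarrow A$, followed by the $n-2$ further steps $A \hookrightarrow E_1 \hookrightarrow \cdots \hookrightarrow E_{n-2}$. In the notation of Section~2 this product is exactly $P := M^{n-1}$. (For $n=2$ one has $E_0 = A$ and $P = M^1 = M$, so the statement reduces to the hypothesis.) Since $P$ is the inclusion matrix of a genuine semisimple pair, it is automatically irredundant, so its depth is defined.

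Next I would compute the quantities entering the depth two inequality $P^3 = PP^tP \leq qP$ for $P$. Splitting on the parity of $n-1$ and writing $\mathcal{S} = MM^t$, a short telescoping calculation gives $PP^t = \mathcal{S}^{n-1} = M^{2(n-1)}$ in both cases: when $n-1 = 2m$ one has $P = \mathcal{S}^m$ symmetric, so $PP^t = \mathcal{S}^{2m}$; when $n-1 = 2m-1$ one has $P = \mathcal{S}^{m-1}M$, so $PP^t = \mathcal{S}^{m-1}(MM^t)\mathcal{S}^{m-1} = \mathcal{S}^{2m-1}$. Multiplying once more by $P$ yields $P^3 = \mathcal{S}^{n-1}P = M^{3(n-1)}$, again in both cases. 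The one point to watch is that $M^aM^b \neq M^{a+b}$ in general for this nonstandard product; the equalities telescope only because an even power $M^{2k} = \mathcal{S}^k$ is genuinely symmetric and composes correctly with the next factor. This parity bookkeeping is the only real obstacle, and it is routine once the cases are set up.

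Finally, the depth two inequality for $P$ reduces to $M^{3(n-1)} \leq qM^{n-1}$, which by Remark~\ref{nonzeros} is equivalent to $A(M^{3(n-1)}) \subseteq A(M^{n-1})$. Since $M$ has depth $n$, the chain of nonzero-entry sets stabilizes, $A(M^{n-1}) = A(M^{n+1}) = A(M^{n+3}) = \cdots$, over all powers of a fixed parity at least $n-1$ (Corollary~\ref{cor-stabilization} together with the ascending-chain observation of Remark~\ref{supersets}). As $3(n-1) = (n-1) + 2(n-1)$ has the same parity as $n-1$ and exceeds it, we obtain $A(M^{3(n-1)}) = A(M^{n-1})$, hence $P^3 \leq qP$ for a suitable $q \in \Z_+$. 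Therefore $B \hookrightarrow E_{n-2}$ satisfies a depth two inequality, which is the asserted embedding.
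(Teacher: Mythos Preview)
Your proof is correct and follows essentially the same route as the paper's own argument: identify the inclusion matrix of $B \hookrightarrow E_{n-2}$ with $M^{n-1}$, unwind $P^3 = M^{3(n-1)}$ via the parity cases, and reduce the depth two inequality for $P$ to $M^{3(n-1)} \leq r M^{n-1}$. The only cosmetic difference is that the paper obtains this last inequality by noting $3n-3 \geq n+1$ (so depth $n$ implies a depth $3n-3$ inequality, hence $M^{3n-3} \leq r M^{n-1}$ by iterating), whereas you phrase the same iteration through the stabilization $A(M^{n-1}) = A(M^{n+1}) = \cdots$ and Remark~\ref{nonzeros}; these are equivalent by Corollary~\ref{cor-stabilization}.
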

\begin{proof}
If the inclusion matrix of $B \subseteq A$ is $M$, then  $M^{n+1}
\leq q M^{n-1}$ for some $q \in \Z_+$.  Since $n \geq 2$, we have
$3n-3 \geq n+1$ and $M$  also satisfies a depth $3n-3$ matrix
inequality.  Then there is $r \in \Z_+$ such that $M^{3n-3} \leq
rM^{n-1}$.  In other words, by checking odd and even case, this is
the same as
$$M^{n-1}(M^{n-1})^t M^{n-1} \leq r M^{n-1} ,$$
which of course is the depth two condition for the matrix $M^{n-1} =
MM^tM\ldots$ ($n-1$ times $M$ and $M^t$ alternately). But $M^{n-1}$
is the inclusion matrix of the composite subalgebra pair $B
\hookrightarrow E_{n-2}$.
\end{proof}

\subsection{Depth of tensor product of matrices}  Given two irredundant
matrices, an $r \times s$ matrix $M= (m_{ij})$ and a $p \times q$
matrix $N= (n_{ij})$, we form the tensor product $M \otimes N$
corresponding to the tensor product of linear mappings between
vector spaces.  In terms of block matrix representation, $M \otimes
N$ is the $rp \times sq$ matrix $(m_{ij}N)$, or equivalently up to
permutation $(Mn_{ij})$. Our interest in determining the depth of $M
\otimes N$ knowing the depths of $M$ and of $N$ comes from the
following situation in group theory. Given a subgroup $H_1 < G_1$
with inclusion matrix $M$ and another subgroup $H_2 < G_2$ with
inclusion matrix $N$, the inclusion matrix of $H_1 \times H_2 < G_1
\times G_2$ is none other than $M \otimes N$.

\begin{prop}  Suppose irredundant matrix $M$ has depth $n$ and irredundant matrix
$N$ has depth $m$.  Then $M \otimes N$ has depth at most $\max \{ n,
m \}$.
\end{prop}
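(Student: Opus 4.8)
The plan is to reduce the whole statement to the combinatorics of the nonzero-entry sets $A(\cdot)$ via the stabilization criterion of Corollary~\ref{cor-stabilization}, exploiting that the Kronecker product is compatible with transposition and with the alternating products used in the definition of depth.

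First I would record two elementary algebraic facts. Since $(M \otimes N)^t = M^t \otimes N^t$ and the Kronecker product obeys the mixed-product rule $(P \otimes Q)(P' \otimes Q') = (PP') \otimes (QQ')$, the symmetric square satisfies $\mathcal{S}_{M \otimes N} = (M \otimes N)(M \otimes N)^t = (MM^t) \otimes (NN^t) = \mathcal{S}_M \otimes \mathcal{S}_N$. Iterating, I obtain for every exponent the clean formula $(M \otimes N)^k = M^k \otimes N^k$, verified by the odd case $(M \otimes N)^{2a+1} = (\mathcal{S}_M^a M) \otimes (\mathcal{S}_N^a N)$ and the even case $(M \otimes N)^{2a} = \mathcal{S}_M^a \otimes \mathcal{S}_N^a$ separately. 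At the same time $M \otimes N$ is itself irredundant: in block form $M \otimes N = (m_{ij}N)$, the row indexed by a pair $(i,a)$ is nonzero exactly when row $i$ of $M$ and row $a$ of $N$ are both nonzero, and similarly for columns, so the definition of depth genuinely applies to $M \otimes N$.

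Second, I would isolate the key bookkeeping identity for nonzero entries: a block entry of $P \otimes Q$ is a product of an entry of $P$ with an entry of $Q$, hence it is nonzero precisely when both factors are. Thus, under the block-index identification, $A(P \otimes Q)$ is the product set $A(P) \times A(Q)$; in particular $A(P \otimes Q) = A(P') \times A(Q')$ whenever $A(P) = A(P')$ and $A(Q) = A(Q')$. Now set $\ell := \max\{n,m\}$. Because $M$ has depth $n$ it satisfies a depth $k$ inequality for every $k \ge n$ (as noted in the introduction), which by Corollary~\ref{cor-stabilization} reads $A(M^{k-1}) = A(M^{k+1})$; taking $k = \ell \ge n$ gives $A(M^{\ell-1}) = A(M^{\ell+1})$, and the same reasoning with $\ell \ge m$ gives $A(N^{\ell-1}) = A(N^{\ell+1})$. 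Combining these with the identities above yields
$$A\big((M \otimes N)^{\ell-1}\big) = A(M^{\ell-1}) \times A(N^{\ell-1}) = A(M^{\ell+1}) \times A(N^{\ell+1}) = A\big((M \otimes N)^{\ell+1}\big),$$
which by Corollary~\ref{cor-stabilization} applied to $M \otimes N$ is exactly the assertion that $M \otimes N$ satisfies a depth $\ell$ inequality. Hence its depth is at most $\ell = \max\{n,m\}$.

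The computations are all routine; the only point that needs care is the parity bookkeeping in the power formula $(M \otimes N)^k = M^k \otimes N^k$, since these ``powers'' are the alternating products $MM^tM\cdots$ rather than ordinary matrix powers, so one must confirm that the mixed-product rule survives the insertion of transposes through $(M\otimes N)^t = M^t \otimes N^t$. I expect this to be the main (and quite mild) obstacle; once it is in place, the stabilization argument is immediate.
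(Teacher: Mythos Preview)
Your proof is correct and follows essentially the same route as the paper: both rest on the identity $(M\otimes N)^k = M^k\otimes N^k$ (via $(M\otimes N)^t = M^t\otimes N^t$ and the mixed-product rule) together with the fact that a depth $m$ matrix also satisfies a depth $\ell$ inequality for every $\ell\ge m$. The only cosmetic difference is that the paper works directly with the inequalities, obtaining $(M\otimes N)^{\ell+1}=M^{\ell+1}\otimes N^{\ell+1}\le (q_1 q_2)\,M^{\ell-1}\otimes N^{\ell-1}=(q_1 q_2)(M\otimes N)^{\ell-1}$ with an explicit constant $q=q_1 q_2$, whereas you pass through the equivalent stabilization criterion $A(\,\cdot\,)$ of Corollary~\ref{cor-stabilization}; this is the same argument in slightly different clothing.
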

\begin{proof}  Suppose $n \geq m$.  Then we  show that $(M\otimes N)^{n+1} \leq
q (M \otimes N)^{n-1}$ for some $q \in \Z_+$.  Note that $(M \otimes
N)^t = M^t \otimes N^t$, then $(M \otimes N)^m = M^m \otimes N^m$ in
the meaning of power of non-square matrices given above.  But we are
given that $M^{n+1} \leq q_1 M^{n-1}$ for some $q_1 \in \Z_+$, and
since $N$ satisfies a depth $m$ matrix inequality  $\Rightarrow$ $N$
satisfies a depth $n$ matrix inequality, there is $q_2 \in \Z_+$
such that also $N^{n+1} \leq q_2 N^{n-1}$.  It  follows from the
definition of tensor product that $(M\otimes N)^{n+1} \leq q (M
\otimes N)^{n-1}$ with $q = q_1 q_2$.
\end{proof}
\section{Depth two and normality}

Let $S \subset R$ an inclusion of multimatrix algebras.
We define the subring $S$ to be \textit{normal} in $R$ if
the restriction of every maximal ideal $I$ (in $R$) to $S$ is $R$-invariant,
meaning that $(I \cap S)R = R(I \cap S)$ as subsets of $R$. This definition
of normal subrings is first given in \cite[Rieffel]{R}  and used to provide a ring-theoretic setting for Clifford theory. It is also
closely related historically to the $HK^+ = K^+H$ condition of normality of a Hopf subalgebra $K$ in a Hopf algebra $H$.

{\bf Notation: } Let $\hat{R}$ denote the set of maximal two sided ideals of $R$. Similarly define $\hat{S}$.
Any $I\in \hat{R}$ determines up to isomorphism a unique simple $R$-module denoted by $V_I$ and a minimal (primitive) central idempotent $f_I$ of $R$. Similarly any $J\in \hat{S}$ determines up to isomorphism a unique simple $S$-module denoted by $W_J$ and a minimal central idempotent $q_J$ of $S$.

\begin{prop}\label{const}
$W_J$ is a constituent of $V_I\downarrow_S$ if and only if $I\cap S \subset J$ if and only if $q_Jf_I\neq 0$.
\end{prop}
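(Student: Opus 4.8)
The plan is to reduce everything to Wedderburn bookkeeping with the central idempotents. Write the multimatrix decomposition $R=\prod_{I'\in\hat R}R_{I'}$ into simple (matrix) factors; under it $f_I$ is the identity of the factor $R_I$ and zero on the others, so that $I=(1-f_I)R=\{r\in R:f_Ir=0\}$, while $V_I$ is the (unique up to isomorphism) simple $R_I$-module, on which $f_I$ acts as the identity and $R_I$ acts faithfully. The analogous statements hold for $S=\prod_{J'\in\hat S}S_{J'}$, $q_J$, $J$ and $W_J$. Denote the three assertions by (a) ``$W_J$ is a constituent of $V_I\downarrow_S$'', (b) ``$I\cap S\subseteq J$'', and (c) ``$q_Jf_I\neq0$''. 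I will establish (a)$\Leftrightarrow$(c) and (c)$\Leftrightarrow$(b).

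For (a)$\Leftrightarrow$(c): since $q_J$ is a central idempotent of $S$, the submodule $q_J(V_I\downarrow_S)$ is exactly the $W_J$-isotypic component of $V_I\downarrow_S$, so $W_J$ occurs as a constituent if and only if $q_JV_I\neq0$. Because $f_I$ acts as the identity on $V_I$, one has $q_Jv=q_Jf_Iv$ for every $v\in V_I$, hence $q_JV_I=(q_Jf_I)V_I$. The element $q_Jf_I=f_Iq_J$ lies in the simple factor $R_I=f_IR$, which acts faithfully on $V_I$; therefore $q_Jf_I=0$ if and only if it annihilates $V_I$, i.e. if and only if $q_JV_I=0$. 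Combining these, $W_J$ is a constituent of $V_I\downarrow_S$ precisely when $q_Jf_I\neq0$.

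For (c)$\Leftrightarrow$(b): the restriction $I\cap S$ is a two-sided ideal of the semisimple algebra $S$, hence a sub-product $\prod_{J'\in\Lambda}S_{J'}$ for some $\Lambda\subseteq\hat S$. Consequently $I\cap S\subseteq J$ holds if and only if $J\notin\Lambda$, i.e. if and only if the factor $S_J=q_JS$ is \emph{not} contained in $I\cap S$, i.e. if and only if $q_J\notin I\cap S$. Now $q_J\in S$ always, and $q_J\in I$ if and only if $f_Iq_J=0$ by the description of $I$ above; thus $q_J\in I\cap S$ if and only if $q_Jf_I=0$. Negating, $I\cap S\subseteq J$ if and only if $q_Jf_I\neq0$, which is (c).

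The only genuinely delicate points are the two structural facts that make the idempotent bookkeeping valid: that a central idempotent cuts out precisely an isotypic component (so that nonvanishing of $q_JV_I$ detects $W_J$ as a constituent), and that over the semisimple algebra $S$ every two-sided ideal is a product of simple factors (so that containment in the maximal ideal $J$ collapses to the single condition $q_J\notin I\cap S$). Both are standard consequences of Wedderburn theory, so I expect no real obstacle; the main care is simply to keep straight that $f_I$ lives in $R$ whereas $q_J$ lives in $S\subseteq R$, and that the two commute because $f_I$ is central in $R$.
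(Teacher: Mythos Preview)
Your proof is correct and follows essentially the same approach as the paper's (very terse) argument: both use that $q_J$ cuts out the $W_J$-isotypic component and that $f_I$ is central in $R$ to reduce (a)$\Leftrightarrow$(c) to an idempotent computation, and both obtain (b) by passing to annihilator ideals in $S$. The only cosmetic difference is that the paper works with $q_J(Rf_I)$ (using that $Rf_I$ is a direct sum of copies of $V_I$) where you work directly with $q_JV_I$; your version is simply a more detailed expansion of the paper's two-line sketch.
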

\begin{proof}
$W_J$ is a constituent of $V_I\downarrow_S$ if and only if $q_J(Rf_I)\neq 0$. Since $f_I$ is central this is equivalent with $q_Jf_I\neq 0$. The first statement results taking annihilators in $S$.
\end{proof}

\begin{prop}
1) For any idempotent $e \in S$ one has $$e=e(\sum_{\{I \in \hat{R}|\;ef_I\neq 0\}}f_I).$$

2) For any idempotent $f \in R$ one has $$f=f(\sum_{\{J \in \hat{S}|\;fq_J\neq 0\}}q_J).$$
\end{prop}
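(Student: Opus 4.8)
The plan is to reduce both statements to the completeness and orthogonality of the primitive central idempotents of a semisimple algebra. Since $R$ is multimatrix, its Wedderburn decomposition $R \cong \prod_{I \in \hat{R}} M_{n_I}(\C)$ shows that the family $\{ f_I \}_{I \in \hat{R}}$ consists exactly of the primitive central idempotents of $R$: they are pairwise orthogonal, central, and satisfy the resolution of the identity $\sum_{I \in \hat{R}} f_I = 1_R$. This completeness relation, coming from semisimplicity, is the only structural input the argument needs.

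For part 1), I would simply expand $e$ against this resolution of the identity. Since $e \in S \subseteq R$ and $1_R = \sum_{I} f_I$, one has
$$ e = e \cdot 1_R = e \sum_{I \in \hat{R}} f_I = \sum_{I \in \hat{R}} e f_I. $$
Every summand indexed by an $I$ with $e f_I = 0$ contributes nothing, so the sum collapses onto the index set $\{\, I \mid e f_I \neq 0 \,\}$. Factoring $e$ back out of the surviving terms gives precisely $e = e\bigl( \sum_{\{I : e f_I \neq 0\}} f_I \bigr)$, which is the claim.

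For part 2), the argument is formally identical after exchanging the roles of $R$ and $S$: the idempotents $\{ q_J \}_{J \in \hat{S}}$ are the primitive central idempotents of $S$, so $\sum_{J \in \hat{S}} q_J = 1_S = 1_R$, where I use that the inclusion $S \subseteq R$ is unital. For $f \in R$ one writes $f = f \cdot 1_R = \sum_{J \in \hat{S}} f q_J$, discards the vanishing summands, and factors $f$ out on the left. The one point to keep in mind is that here $q_J$ need not be central in $R$, so $f q_J \neq q_J f$ in general; however only right distributivity is used, so the noncommutativity causes no difficulty.

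I expect no genuine obstacle: each identity is an instance of the elementary fact that an element of a semisimple algebra equals itself times the sum of those central idempotents on which it does not vanish. The only things worth flagging are that the idempotence of $e$ (resp. $f$) is never actually invoked — the formula holds verbatim for an arbitrary element of $S$ (resp. $R$) — and that in part 2) one must use $1_S = 1_R$ so that the resolution of the identity furnished by $S$ is still a resolution of $1_R$.
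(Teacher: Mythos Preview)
Your argument is correct; in fact the paper states this proposition without proof, treating it as immediate from the resolution of the identity $\sum_I f_I = 1_R$ (resp.\ $\sum_J q_J = 1_S = 1_R$), which is precisely what you wrote out. Your remarks that idempotence of $e$ and $f$ is not actually needed and that part~2) tacitly uses unitality of the inclusion are accurate and worth noting.
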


The next proposition makes an improvement of \cite[Prop.\ 2.10]{R}.

\begin{prop}\label{rfixed}
Assume that for any simple $A$-module $V$ the irreducible constituents of $V$ form an entire equivalence class of $\approx$. Then $\sim$ is transitive and $B$ is normal in $A$.
\end{prop}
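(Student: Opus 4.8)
The plan is to treat the two conclusions in turn, reducing the normality statement to a single centrality assertion about the idempotents attached to the classes of $\approx$.

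\emph{Transitivity of $\sim$.} I would argue directly from the hypothesis. Suppose $W_i \sim W_j$ and $W_j \sim W_k$. Then there are simple $A$-modules $V, V'$ whose restrictions to $B$ contain $\{W_i, W_j\}$ and $\{W_j, W_k\}$ respectively. Writing $\mathcal{V}, \mathcal{V}'$ for their sets of $B$-constituents, the hypothesis says each of $\mathcal{V}, \mathcal{V}'$ is an entire $\approx$-class. Since $W_j$ lies in both and distinct classes of the equivalence relation $\approx$ are disjoint, $\mathcal{V} = \mathcal{V}'$; hence $W_i$ and $W_k$ are both constituents of $V\downarrow_B$, i.e. $W_i \sim W_k$. (This also identifies $\sim$ with $\approx$, and by Corollary~\ref{d3} expresses that the inclusion has depth $\le 3$.)

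\emph{Reduction of normality.} Fix a maximal ideal $I_\chi \in \hat{A}$, with simple module $V_\chi$, primitive central idempotent $e_\chi$, and set of $B$-constituents $\mathcal{V}_\chi$. First I would identify the restricted ideal. An element $b \in B$ lies in $I_\chi = \mathrm{Ann}_A(V_\chi)$ exactly when $b$ annihilates $V_\chi$, which happens exactly when $b$ annihilates every $W_\alpha$ with $\alpha \in \mathcal{V}_\chi$; a direct computation in the multimatrix setting (cf.\ Proposition~\ref{const}) then gives $I_\chi \cap B = B(1-z)$, where $z := \sum_{\alpha \in \mathcal{V}_\chi} f_\alpha \in \mathcal{Z}(B)$. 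The whole statement now reduces to one claim: \emph{$z$ is central in $A$}. Granting this, $1-z$ is a central idempotent of $A$, and using $BA = A = AB$ one gets $(I_\chi \cap B)A = (1-z)A = A(1-z) = A(I_\chi \cap B)$, which is precisely Rieffel's normality condition.

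\emph{The key step and the main obstacle.} The heart of the argument is the centrality of $z=\sum_{\alpha \in \mathcal{V}_\chi} f_\alpha$, and this is where the hypothesis is indispensable. I would prove it by showing $z$ is a sum of primitive central idempotents $e_\psi$ of $A$, equivalently that $z$ acts as the scalar $0$ or $1$ on every simple $A$-module $V_\psi$. On $V_\psi$ the element $z$ acts as the identity on those constituents $W_\alpha$ with $\alpha \in \mathcal{V}_\chi$ and as zero on the rest. Now both $\mathcal{V}_\psi$ and $\mathcal{V}_\chi$ are entire $\approx$-classes by hypothesis, so they are either equal or disjoint: in the first case $z$ acts as $\mathrm{id}_{V_\psi}$, forcing $z-e_\psi \in I_\psi$ and hence $e_\psi z = e_\psi$; in the second $z$ annihilates $V_\psi$, giving $e_\psi z = 0$. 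Summing over $\psi$ via $\sum_\psi e_\psi = 1$ yields $z = \sum_{\psi :\, \mathcal{V}_\psi = \mathcal{V}_\chi} e_\psi$, a central idempotent of $A$. The obstacle is exactly this dichotomy: without the assumption that each $\mathcal{V}_\psi$ is a full $\approx$-class, some $V_\psi$ could have part of its restriction inside $\mathcal{V}_\chi$ and part outside, so $z$ would act non-scalarly and fail to be central. The hypothesis is precisely what excludes this, and it is the only place it enters.
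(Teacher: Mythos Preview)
Your argument is correct and rests on the same idea as the paper's: the hypothesis forces, for every simple $A$-module, the dichotomy that its $B$-constituents lie entirely inside or entirely outside the given $\approx$-class, and this is exactly what makes the associated idempotent land in $\mathcal{Z}(A)\cap B$. The only organisational difference is a harmless duality: the paper fixes a $B$-simple $J_0$, defines the idempotent as a sum of primitive central idempotents of $A$ (those $f_I$ with $q_{J_0}f_I\neq 0$) and then shows it equals a sum of primitive central idempotents of $B$; you fix an $A$-simple $\chi$, define $z$ as a sum of primitive central idempotents of $B$ and then show it equals a sum of primitive central idempotents of $A$. Your version has the small advantage of deriving $R$-invariance of $I_\chi\cap B$ directly from centrality of $1-z$, whereas the paper appeals to Rieffel's Proposition~2.6 at that step.
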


\begin{proof}
Clearly $\sim$ is transitive. We follow the reasoning from Rieffel's proof of Proposition 2.10. Fix $J_0\in \hat{S}$. Construct $e_{J_0}=\sum_{I \in X_0}f_I$ where $X_0=\{I \in \hat{R}|q_{J_0}f_I\neq 0\}$. Then $q_{J_0}e_{J_0}=q_{J_0}$ by the first item of the above Proposition. Suppose that $q_Je_{J_0}\neq 0$ for some other $J \in \hat{S}$. Then $q_Jf_I\neq 0$ for at least one $I \in X_0$. Then $W_{J_0}\sim W_J$ by the above remark. The assumption on the restriction of modules implies that $q_Jf_I \neq 0$ for all $I \in X_0$ and $q_Jf_I= 0$ for all $I \notin X_0$. The first item of the above Proposition shows that $q_J=q_Je_{J_0}$. The second item of the same Proposition implies that

\begin{equation} e_{J_0} =\sum_{ \{J \in \hat{S}\;| e_{J_o}q_J \neq 0\}}e_{J_0}q_J=\sum_{\{J \in \hat{S}\;| e_{J_o}q_J \neq 0\}}q_J.\end{equation}
Thus $q_{J_0} \in S$. Then the rest of the proof is the same as in Rieffel's paper. Indeed, if $I \in \hat{R}$ such that $I\cap S\subseteq J_0$ then $1-e_{J_0} \in I\cap S$. On the other hand if $J \in \hat{S}$ and $I\cap S \nsubseteq J$ then $q_J(1-e_{J_0})=q_J$. Thus $1-e_{J_0}$ is the identity element of $I \cap S$. Since it is central in $R$, Proposition~2.6 of \cite[Rieffel]{R} implies that $I \cap S$ is $R$-invariant. Thus $S$ is normal in $R$.
\end{proof}

\begin{lemma}\label{d2}
$\mathcal{S}M \leq n M$ for some $n>0$ if and only if for any simple $A$-module $V$ the irreducible constituents of $V\downarrow_B$ form an entire equivalence class of $\approx$. \end{lemma}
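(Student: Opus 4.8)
The plan is to pass from the matrix inequality to a statement about supports and then to the combinatorics of the relation $\sim$ on the simples of $B$. First I would invoke Remark~\ref{nonzeros}: since $\mathcal{S}M, M \in \mathcal{M}_{rs}(\R_{\geq 0})$, the inequality $\mathcal{S}M \leq nM$ holds for some $n>0$ if and only if $A(\mathcal{S}M) \subseteq A(M)$. Because $s_{ii}>0$ for every $i$, one has $(\mathcal{S}M)_{iu} \geq s_{ii}m_{iu}$, so $A(M) \subseteq A(\mathcal{S}M)$ holds automatically; hence the depth two condition is equivalent to the equality $A(\mathcal{S}M) = A(M)$.

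Next I would compute both supports in terms of the distance to the constituent set $\mathcal{V}_u$. By the remark following Definition~\ref{defn}, $A(M) = \{(i,u) : W_i \in \mathcal{V}_u\}$, that is, the pairs with $d(W_i,\mathcal{V}_u)=0$. For $A(\mathcal{S}M)$, expanding $(\mathcal{S}M)_{iu} = \sum_l s_{il}m_{lu}$ shows that $(\mathcal{S}M)_{iu}>0$ if and only if there is a simple $W_l \in \mathcal{V}_u$ with $l=i$ or $W_i \sim W_l$; equivalently $d(W_i,\mathcal{V}_u) \leq 1$ (the case $d=0$ coming from $m_{iu}>0$, the case $d=1$ being exactly Proposition~\ref{stimesm} with $m=1$). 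Thus the depth two condition $A(\mathcal{S}M)=A(M)$ is equivalent to the purely graph-theoretic statement that for every simple $A$-module $V_u$, no simple $B$-module lies at distance exactly $1$ from $\mathcal{V}_u$.

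It then remains to show that this last statement is equivalent to $\mathcal{V}_u$ being an entire $\approx$-class for every $u$. Recall that the constituents of $V_u\downarrow_B$ always lie inside a single $\approx$-class, so only the reverse containment is at issue. For the forward direction I would argue by a first-exit walk: if $\mathcal{V}_u$ were a proper subset of its $\approx$-class, there would be some $W_j \approx \mathcal{V}_u$ with $W_j \notin \mathcal{V}_u$, hence a finite $\sim$-chain from an element of $\mathcal{V}_u$ to $W_j$; the first vertex $W_k$ on this chain lying outside $\mathcal{V}_u$ has a predecessor in $\mathcal{V}_u$ and is $\sim$-adjacent to it, so $d(W_k,\mathcal{V}_u)=1$, contradicting the assumption. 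Conversely, if every $\mathcal{V}_u$ is a full $\approx$-class and $d(W_i,\mathcal{V}_u)=1$, then $W_i \sim W_l$ for some $W_l \in \mathcal{V}_u$, whence $W_i \approx W_l$ and so $W_i \in \mathcal{V}_u$, contradicting $d=1$; thus no vertex sits at distance $1$.

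The routine parts are the support reformulation and the reverse direction, which follow immediately from the definition of $\approx$ as the transitive closure of $\sim$. I expect the main obstacle to be the first-exit argument in the forward direction: one must extract from a proper-containment failure a concrete vertex at distance exactly $1$, which requires choosing a $\sim$-chain leaving $\mathcal{V}_u$ and reading off its first exit vertex, while checking that distance $1$ (and not $0$) is genuinely attained precisely because that vertex lies outside $\mathcal{V}_u$.
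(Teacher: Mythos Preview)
Your proposal is correct and follows essentially the same route as the paper: both reduce the inequality to the support condition $A(\mathcal{S}M)\subseteq A(M)$ via Remark~\ref{nonzeros}, then translate this into the statement that each $\mathcal{V}_u$ is closed under $\sim$ (equivalently, no $W_i$ sits at distance exactly~$1$ from $\mathcal{V}_u$). The only cosmetic difference is that the paper proves the forward direction in one line by showing directly that $W_i\sim W_j$ and $W_j\in\mathcal{V}_u$ force $W_i\in\mathcal{V}_u$ (i.e.\ $\mathcal{V}_u$ is $\sim$-closed, hence a full $\approx$-class), whereas you reach the same conclusion via the equivalent first-exit walk; the content is identical.
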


\begin{remark}\normalfont
With the previous notations the statement of the lemma can be rephrased as $\mathcal{S}M \leq n M$ for some $n >0$ if and only if $\mathcal{V}_u$ coincides with an entire equivalence class of $\sim$ for any simple $A$-module $V_u$.
\end{remark}

\begin{proof}
Suppose that $\mathcal{S}M \leq n M$ for some $n>0$. Clearly the  nonzero-entry subsets satisfy $A(\mathcal{S}M) \subset A(M)$. Suppose that $W_i \sim W_j$ and $W_j \in \mathcal{V}_u$. Then Proposition~\ref{prop-entries2} and Remark \ref{entries1} imply that $s_{ij}>0$ and $m_{ju}>0$. Thus $(\mathcal{S}M)_{iu}>0$. This means $(i,u) \in A(\mathcal{S}M) \subset A(M)$. Thus $m_{iu}>0$, i.e. $W_i \in \mathcal{V}_u$.

Conversely, suppose that $\mathcal{V}_u$ coincides with an entire equivalence class of $\sim$. We need to show $\mathcal{S}M \leq n M$ for some $n>0$ or $A(\mathcal{S}M) \subset A(M)$.
Let $\{(i,u),\; (u,i)\} \in A(\mathcal{S}M)$. Thus there is $1 \leq l \leq r$ such that $\mathcal{S}_{il}>0$ and $m_{lu}>0$. This means $W_i \sim W_l$ and $W_l \in \mathcal{V}_u$. Since $\mathcal{V}_u$ coincides with an entire equivalence class of $\sim$ it follows that $W_i \in \mathcal{V}_u$. Thus $m_{iu}>0$ and $(i,\;u) \in A(M)$.
\end{proof}

\begin{theorem}
\label{th-Rief}
The inclusion $B \subseteq A$ is of depth $2$ if and only if $B$ is normal in $A$.
\end{theorem}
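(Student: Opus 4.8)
The plan is to route both implications through the combinatorial criterion of Lemma~\ref{d2}: the depth two inequality $\mathcal{S}M \le qM$ holds for some $q\in\Z_+$ if and only if, for every simple $A$-module $V$, the constituents of $V\downarrow_B$ form an entire equivalence class of $\approx$. Granting this, the forward direction is immediate. If $B\subseteq A$ has depth two then $\mathcal{S}M\le qM$ by definition, so Lemma~\ref{d2} supplies exactly the hypothesis of Proposition~\ref{rfixed}, and that proposition concludes that $B$ is normal in $A$. Nothing further is needed in this direction beyond chaining the two cited results.

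The content is the converse, which I would prove by a central-idempotent translation of Rieffel's invariance condition. I adopt the Section~4 notation with $R=A$, $S=B$: for $I\in\hat{A}$ let $f_I$ be the corresponding central idempotent of $A$ and $V_I$ the simple, for $J\in\hat{B}$ let $q_J$ be the central idempotent of $B$ and $W_J$ the simple, and write $\mathcal{V}_I$ for the set of $B$-constituents of $V_I\downarrow_B$ (the $\mathcal{V}_u$ of Section~3). Since $I=A(1-f_I)$, one has $I\cap B=\{b\in B: bf_I=0\}$, a two-sided ideal of the semisimple algebra $B$ whose identity is a central (in $B$) idempotent $e_I$; by Proposition~\ref{const}, $q_Jf_I\ne 0$ exactly when $W_J\in\mathcal{V}_I$, so $z_I:=1-e_I=\sum_{\{J:\,W_J\in\mathcal{V}_I\}}q_J$. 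Because $1\in B$ forces $BA=A=AB$ and $e_I$ is central in $B$, a direct computation gives $(I\cap B)A=e_IA$ and $A(I\cap B)=Ae_I$; hence Rieffel's condition $(I\cap B)A=A(I\cap B)$ is just $e_IA=Ae_I$, which for an idempotent means $e_I$, equivalently $z_I$, is central in $A$. Thus normality of $B$ in $A$ is equivalent to the centrality in $A$ of every $z_I$.

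Assuming $B$ normal, each $z_I$ is therefore a sum of primitive central idempotents $f_{I'}$ of $A$. The engine of the argument is a propagation step: if $f_{I'}\le z_I$, then $\mathcal{V}_{I'}\subseteq\mathcal{V}_I$, since any constituent $W_J$ of $V_{I'}\downarrow_B$ has $q_Jf_{I'}\ne 0$, whence $q_Jz_I\ne 0$ and $W_J\in\mathcal{V}_I$. I would then fix $W_{J_0}\in\mathcal{V}_I$ and any $W_{J_1}\approx W_{J_0}$; along a $\sim$-chain realizing this equivalence it suffices to handle a single link $W_{J_1}\sim W_{J_0}$, say with both constituents of some $V_{I'}\downarrow_B$. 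Then $q_{J_0}f_{I'}\ne 0$ and $q_{J_0}\le z_I$ give $z_If_{I'}\ne 0$, so $f_{I'}\le z_I$ and the propagation step yields $W_{J_1}\in\mathcal{V}_{I'}\subseteq\mathcal{V}_I$. Iterating along the chain places the entire $\approx$-class of $W_{J_0}$ inside $\mathcal{V}_I$; as $\mathcal{V}_I$ always lies within a single $\approx$-class (the observation recorded when $\mathcal{V}_u$ was defined), it must coincide with that class. Lemma~\ref{d2} then returns $\mathcal{S}M\le qM$, i.e.\ depth two.

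I expect the genuine obstacle to be the reduction in the second paragraph: one must check carefully that $e_I$ is central \emph{in} $B$, so that $Ae_IB=Ae_I$, and that $BA=A=AB$, for otherwise the identification of $(I\cap B)A$ and $A(I\cap B)$ with $e_IA$ and $Ae_I$ collapses and normality no longer reads as centrality of $z_I$. Once that recasting is in hand, the remaining propagation is formal, using only orthogonality of the $q_J$ and Proposition~\ref{const}; the one subsidiary point to get right is the induction along the $\sim$-chain, where reducing to a single link is precisely what lets the propagation step apply.
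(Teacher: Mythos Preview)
Your proof is correct and matches the paper's approach exactly on the direction depth~$2 \Rightarrow$ normal: both chain Lemma~\ref{d2} into Proposition~\ref{rfixed}. For the converse (normal $\Rightarrow$ depth~$2$) the paper simply invokes Proposition~2.8 of Rieffel \cite{R}, which asserts that under normality $\sim$ is an equivalence relation and each $\mathcal{V}_u$ is a full class; you instead supply a self-contained proof of that fact via the central-idempotent recasting $(I\cap B)A = e_IA$, $A(I\cap B)=Ae_I$, reducing Rieffel's invariance to centrality of $z_I$ in $A$ and then running the propagation argument. This is not a genuinely different route---it is essentially Rieffel's own argument redone in the paper's idempotent language, closely parallel to what Proposition~\ref{rfixed} does in the other direction---but it has the virtue of making the proof independent of the external citation. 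The concern you flag about $e_I$ being central in $B$ and $BA=A=AB$ is not an obstacle: $I\cap B$ is a two-sided ideal of the semisimple algebra $B$, so its identity $e_I$ is automatically in $\mathcal{Z}(B)$, and the inclusion $B\subseteq A$ is unital throughout Section~\ref{mult}.
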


\begin{proof}
($\Rightarrow$) By Proposition~2.2 of \cite[Burciu-Kadison]{BK}, $B \subseteq A$ is depth $2$ if and only if $\mathcal{S}M \leq n M$ for some $n$. If $B \subseteq A$ is normal then from Proposition 2.8 in \cite{R} it follows that $\sim$ is an equivalence relation and for any simple $A$-module $V$ then the irreducible constituents of $V\downarrow_B$ form an entire equivalence class. Lemma \ref{d2} implies that the multimatrix algebra inclusion is depth two.

($\Leftarrow$) The converse follows from Lemma~\ref{d2} and Proposition~\ref{rfixed}.
\end{proof}

This theorem provides a third proof of \cite[main theorem, 5.1]{BK2}.
Recall that a  Hopf subalgebra $K$ of a Hopf algebra $H$ is \textit{normal}
if $HK^+ = K^+H$ for $K^+$ the kernel of the counit $\eps: K \rightarrow \C$.

\begin{cor}
A depth two Hopf subalgebra of a semisimple Hopf algebra is normal.
\end{cor}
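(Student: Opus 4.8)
The plan is to deduce the Hopf-theoretic normality $HK^{+}=K^{+}H$ directly from the subring normality furnished by Theorem~\ref{th-Rief}, by applying the latter to a single, canonically chosen maximal ideal of $H$ rather than working with the adjoint action or with an explicit description of $HK^{+}$. First I would check that the hypotheses of Theorem~\ref{th-Rief} are in force: $H$ is a semisimple Hopf algebra over $\C$, hence a multimatrix algebra, and its Hopf subalgebra $K$ is itself semisimple (in characteristic zero $H$ semisimple forces $S^{2}=\id$, and $S$ restricts to $K$, so $K$ is semisimple by Larson--Radford), hence also a multimatrix algebra. (By Nichols--Zoeller $H$ is even free over $K$, although the direction of Theorem~\ref{th-Rief} we invoke does not require this.) Thus $K\subseteq A:=H$ is an inclusion of multimatrix algebras, and Theorem~\ref{th-Rief} tells us that depth two of $K\subseteq H$ is equivalent to $K$ being a normal subring of $H$ in Rieffel's sense, i.e.\ $(I\cap K)H=H(I\cap K)$ for every maximal two-sided ideal $I\in\hat H$.

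The key step is to identify the augmentation ideal as the restriction of a suitable maximal ideal. Set $H^{+}=\ker(\eps\colon H\to\C)$. Since $\eps$ is an algebra map with $H/H^{+}\cong\C$, the ideal $H^{+}$ is a maximal two-sided ideal of $H$, namely the one corresponding to the trivial one-dimensional $H$-module, so $H^{+}\in\hat H$. Because $K$ is a Hopf \emph{sub}algebra, the counit of $H$ restricts to the counit of $K$, whence
$$H^{+}\cap K=\ker(\eps|_{K})=K^{+}.$$

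Finally I would feed $I=H^{+}$ into the $H$-invariance guaranteed by Theorem~\ref{th-Rief}: from $(H^{+}\cap K)H=H(H^{+}\cap K)$ together with $H^{+}\cap K=K^{+}$ we obtain $K^{+}H=HK^{+}$, which is exactly the defining condition for $K$ to be a normal Hopf subalgebra. I expect the only points needing care --- and hence the ``main obstacle,'' such as it is --- to be the verification that $K$ is a genuine split semisimple (multimatrix) algebra so that the machinery behind Theorem~\ref{th-Rief} applies, and the elementary but essential observation that $H^{+}$ is maximal and restricts precisely to $K^{+}$. Once these are in place the corollary is immediate, and it transparently recovers \cite[5.1]{BK2} as the specialization of the subring-normality theorem to the augmentation ideal.
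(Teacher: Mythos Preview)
Your proof is correct and follows essentially the same route as the paper: apply Theorem~\ref{th-Rief} to obtain Rieffel normality, then specialize to the maximal ideal $H^{+}$ and use $H^{+}\cap K=K^{+}$ to conclude $K^{+}H=HK^{+}$. Your version is simply more explicit about why $K$ is semisimple and why $H^{+}$ is maximal.
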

\begin{proof}
A Hopf subalgebra of a semisimple Hopf algebra is known to be semisimple.
Since it is depth two, it is normal as defined by Rieffel.   But $H^+ \cap K = K^+$  for the kernel of the counit $\eps: H \rightarrow \C$, a maximal
ideal in $H$.  This ideal in $K$ is then $H$-invariant, so $K$ is a normal Hopf subalgebra.
\end{proof}
The following corollary puts the last theorem together with Theorem~\ref{thm-tower} in a previous section.
Recall that given a subalgebra $B \subseteq A$, the first
endomorphism ring $E_1 = \End A_B$, the second $E_2 = \End (E_1)_A$, and so forth (with $A = E_0$ and $B = E_{-1}$).
We embed $E_n$ in $E_{n+1}$ as before via left multiplication in what we call the \textit{Jones tower}
above $B \subseteq A$.
\begin{cor}
\label{cor-tower}
Let $B$ be a semisimple subalgebra in a semisimple algebra $A$. If $B$ is depth $n$
then $B$ is normal in $ E_{n-2}$.
\end{cor}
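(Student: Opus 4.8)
The plan is to deduce this directly from the two principal results already in hand, Theorem~\ref{thm-tower} and Theorem~\ref{th-Rief}, so that no fresh computation with inclusion matrices is needed. First I would invoke Theorem~\ref{thm-tower}: since $B \subseteq A$ is a depth $n$ subalgebra pair of semisimple algebras, that theorem exhibits $B \hookrightarrow E_{n-2}$ as a depth two subalgebra pair, where $E_{n-2}$ is the appropriate rung of the Jones tower above $B \subseteq A$. Concretely, if $M$ is the inclusion matrix of $B \subseteq A$, then the inclusion matrix of $B \hookrightarrow E_{n-2}$ is the composite $M^{n-1} = MM^tM\cdots$, and the proof of Theorem~\ref{thm-tower} verifies precisely that $M^{n-1}(M^{n-1})^t M^{n-1} \leq r\, M^{n-1}$ for some $r \in \Z_+$, i.e.\ that $M^{n-1}$ satisfies the depth two matrix inequality.

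Next I would check that the hypotheses of Theorem~\ref{th-Rief} genuinely apply to the inclusion $B \hookrightarrow E_{n-2}$, not merely to the original pair $B \subseteq A$. By construction each rung $E_m$ of the Jones tower is Morita equivalent to $B$ or to $A$, hence is itself a semisimple (multimatrix) algebra, as recorded in the discussion preceding Theorem~\ref{thm-tower}; and $B$ is semisimple by assumption. Thus $B \hookrightarrow E_{n-2}$ is an inclusion of multimatrix algebras to which Theorem~\ref{th-Rief} is legitimately available. Applying that theorem to this inclusion then yields at once that $B$ is normal in $E_{n-2}$, which is the assertion.

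There is no substantive obstacle here, since the content has been front-loaded into the two cited theorems: Theorem~\ref{thm-tower} supplies that the composite matrix $M^{n-1}$ meets the depth two condition, and Theorem~\ref{th-Rief} supplies the matrix-theoretic characterization of Rieffel normality as exactly that depth two condition (via Lemma~\ref{d2} and Proposition~\ref{rfixed}). The only point demanding a moment's care is the index bookkeeping together with the observation that $E_{n-2}$ is genuinely semisimple, so that the normality criterion may be read off for the pair $B \hookrightarrow E_{n-2}$ rather than only for $B \subseteq A$; once this is in place the corollary is immediate.
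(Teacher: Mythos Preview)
Your proposal is correct and follows exactly the approach the paper takes: the text introducing the corollary explicitly says it ``puts the last theorem together with Theorem~\ref{thm-tower},'' and your argument is precisely that combination, with the added (and appropriate) remark that $E_{n-2}$ is semisimple so Theorem~\ref{th-Rief} applies to the pair $B \hookrightarrow E_{n-2}$.
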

The converse may be shown as well by a theorem in \cite{CK}
and its generalization to higher depth. We have
then seen depth two subalgebra to be normal in the overalgebra, a depth three subalgebra to be normal in $E_1$  and a higher
depth subalgebra to be normal further along
in the Jones tower.
\section{Inclusions of semisimple Hopf algebras}
Let $K \subset H$ be an inclusion of semisimple Hopf subalgebras. Let $C(H)$ and $C(K)$ be the character rings of $H$ and $K$ respectively.  These are commutative rings if $H$ and $K$ are quasitriangular or cocommutative Hopf algebras such as group algebras.

If $M$ and $N$ are two $H$-modules with characters $\chi$ and $\mu$ respectively, then $m_H(M,\;N):=\mathrm{dim}\, \mathrm{Hom}_H(M,\;N)$. The same quantity is also denoted by $m_H(\chi,\;\mu)$. In this manner one obtains a nondegenerate symmetric bilinear form $m_H(\;,\:\;)$ on the character ring $C(H)$ of $H$.
The following result is Proposition 2 of \cite{Bd}. It shows that the image of the induction map is a two sided ideal in $C(H)$. A different proof that also works in the nonsemisimple case is given below.

\begin{lemma}\label{pr}
Let $K$ be a Hopf subalgebra of a semisimple Hopf algebra $H$. Let $M$ be an $H$-module and $V$ a $K$-module. Then
$$M\ot V\uparrow^H_K=(M\downarrow^H_K\ot V)\uparrow^H_K$$ and
$$V\uparrow^H_K\ot M =(V\ot M\downarrow^H_K)\uparrow^H_K$$
\end{lemma}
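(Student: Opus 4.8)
The plan is to prove both displayed equalities as natural isomorphisms of $H$-modules by writing down explicit mutually inverse maps, i.e. to establish the tensor (projection) identity by hand rather than by comparing characters. This is precisely what lets the argument run in the nonsemisimple case: the only structural input needed is that the antipode $S$ of the finite dimensional $H$ be bijective, which always holds, whereas a character-theoretic route would want semisimplicity. Realize induction as $V\uparrow^H_K = H\ot_K V$ with $H$ acting by left multiplication on the first factor. Then $M\ot V\uparrow^H_K = M\ot(H\ot_K V)$ carries the diagonal action $h(m\ot(g\ot_K v)) = h\1 m\ot(h\2 g\ot_K v)$, while $(M\downarrow^H_K\ot V)\uparrow^H_K = H\ot_K(M\ot V)$, with $M\ot V$ made into a $K$-module diagonally.

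For the first identity I would introduce
\[
\Phi:\ H\ot_K(M\ot V)\ \longrightarrow\ M\ot(H\ot_K V),\qquad
h\ot_K(m\ot v)\ \longmapsto\ h\1 m\ot(h\2\ot_K v),
\]
together with the candidate inverse
\[
\Psi:\ m\ot(h\ot_K v)\ \longmapsto\ h\2\ot_K\bigl(S^{-1}(h\1)\,m\ot v\bigr).
\]
The steps, in order, are: (i) check that $\Phi$ is well defined on the balanced tensor product, i.e. respects $hk\ot_K w = h\ot_K kw$ for $k\in K$, which comes out after applying $\cop$ to $k$ and using the $\ot_K$ relation on the target; (ii) check that $\Phi$ is $H$-linear, a one-line coassociativity computation that also explains why $h\1$ must act on $m$ while $h\2$ goes into the induction slot and not the reverse, since the opposite convention already fails $H$-linearity; (iii) check that $\Psi$ is well defined over $\ot_K$, the only nonroutine compatibility; and (iv) verify $\Psi\Phi=\id$ and $\Phi\Psi=\id$.

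I expect step (iv) to be the main obstacle, and it is there that the choice of $S^{-1}$ rather than $S$ is forced. Composing $\Phi$ then $\Psi$ and applying coassociativity lands one at the interleaved expression $h\3\ot_K\bigl(S^{-1}(h\2)\,h\1\,m\ot v\bigr)$, in which the two legs to be contracted are separated by the leg $h\3$ sitting in the induction slot. The collapse is not the bare antipode axiom but its extended form $\sum S^{-1}(h\2)h\1\ot h\3 = 1\ot h$, obtained by re-expanding $\cop$ through $(\cop\ot\id)\cop$ and invoking $\sum S^{-1}(x\2)x\1 = \eps(x)1$ on the inner two legs; this returns $h\ot_K(m\ot v)$, and the reverse composite is symmetric. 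The pitfall to flag is that the naive inverse built from $S$ does not work once $H$ is noncocommutative, as one already sees on Sweedler's four dimensional Hopf algebra, so the computation must be arranged so that the antipode contracts genuinely adjacent legs in the order for which $\sum S^{-1}(x\2)x\1 = \eps(x)1$ applies.

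The second identity $V\uparrow^H_K\ot M = (V\ot M\downarrow^H_K)\uparrow^H_K$ is then handled by the mirror image construction, interchanging the two tensor factors: one takes $h\ot_K(v\ot m)\mapsto(h\1\ot_K v)\ot h\2 m$ with inverse $(h\ot_K v)\ot m\mapsto h\1\ot_K(v\ot S(h\2)m)$, where now the genuine antipode $S$ replaces $S^{-1}$ because the relevant contraction is $\sum S(h\2)h\3$, governed by $\sum S(x\1)x\2 = \eps(x)1$. The verifications are identical in shape to (i)--(iv), so once the first identity is in place the second follows with no new ideas.
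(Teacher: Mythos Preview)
Your proof is correct and follows essentially the same route as the paper. The paper defines $\phi: M\ot(H\ot_K V)\to H\ot_K(M\ot V)$ by $m\ot h\ot_K v\mapsto h\2\ot_K(S^{-1}(h\1)m\ot v)$ with inverse $h\ot_K m\ot v\mapsto h\1 m\ot h\2\ot_K v$; these are exactly your $\Psi$ and $\Phi$, only with the roles of ``map'' and ``inverse'' interchanged, and the paper omits the routine verifications that you spell out.
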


\begin{proof}
Define \begin{equation*}\phi: M\ot (H \ot_K V) \rightarrow H\ot_K(M\ot V)\end{equation*}
by $m\ot h\ot_K v\mapsto h_{(2)}\ot_K(S^{-1}(h_{(1)}) m\ot v)$.
It can be checked that $\phi$ is a well-defined morphism of $H$-modules. Moreover $\phi$ is bijective since $h\otimes_K\ot m\ot v\mapsto (h_{(1)}m\ot h_{(2)}\otimes_K v)$ is its inverse map.
\end{proof}

In terms of characters the first formula can be written as $\chi\mathrm{Ind}(\alpha)=\mathrm{Ind}( \mathrm{Res}(\chi)\alpha)$ or

\begin{equation}\label{eq2}
\chi\alpha\uparrow=(\chi\downarrow\;\alpha)\uparrow.
\end{equation}

In terms of characters the second formula can be written as $\mathrm{Ind}(\alpha)\chi=\mathrm{Ind}(\alpha \mathrm{Res}(\chi))$ or

\begin{equation}\label{eq}
\alpha\uparrow\chi=(\alpha\;\chi\downarrow)\uparrow.
\end{equation}

Let $T:C(K) \rightarrow C(K)$ be given by $T(\alpha)=\mathrm{Res}(\mathrm{Ind}(\alpha))$. Thus $T(\alpha)=\alpha\uparrow\downarrow$. Note that the matrix of the operator $T$ with respect to the basis of $C(K)$ given by the irreducible characters of $K$ is the matrix $\mathcal{S}$ defined in the previous section.

\begin{lemma}
With the above notations one has
\begin{equation} \label{albet}
T(\alpha T(\beta))=T(\alpha)T(\beta) = T(T(\alpha)\beta)
\end{equation}
for all $\alpha,\beta \in C(K)$.
\end{lemma}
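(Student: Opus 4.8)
The plan is to derive both identities of (\ref{albet}) from just two facts: that restriction $\downarrow$ is a ring homomorphism $C(H)\to C(K)$ (restriction of modules is a tensor functor, since $K$ is a Hopf subalgebra and its coproduct is that of $H$ restricted, so $(\chi\mu)\downarrow=\chi\downarrow\,\mu\downarrow$ for $\chi,\mu\in C(H)$), together with the two projection formulas (\ref{eq2}) and (\ref{eq}) already established in Lemma~\ref{pr}. Once these are in hand the whole statement collapses to a pair of one-line substitutions, with no estimation required.

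First I would rewrite the middle term $T(\alpha)T(\beta)=(\alpha\uparrow\downarrow)(\beta\uparrow\downarrow)$. Since $\downarrow$ is multiplicative, this equals $(\alpha\uparrow\,\beta\uparrow)\downarrow$, the restriction of the product in $C(H)$ of the two induced characters. This single reformulation is the pivot of the argument: the product $\alpha\uparrow\,\beta\uparrow$ can now be pushed back across the induction map in two different ways, one for each of the two outer terms of (\ref{albet}).

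For the equality $T(\alpha)T(\beta)=T(\alpha T(\beta))$, I would apply the projection formula (\ref{eq}), written generically as $\gamma\uparrow\chi=(\gamma\,\chi\downarrow)\uparrow$, with $\gamma=\alpha$ and $\chi=\beta\uparrow$. This gives $\alpha\uparrow\,\beta\uparrow=(\alpha\,(\beta\uparrow\downarrow))\uparrow=(\alpha\,T(\beta))\uparrow$, so restricting yields $(\alpha\,T(\beta))\uparrow\downarrow=T(\alpha T(\beta))$, as desired. For the equality $T(\alpha)T(\beta)=T(T(\alpha)\beta)$, I would instead apply (\ref{eq2}), written as $\chi\gamma\uparrow=(\chi\downarrow\,\gamma)\uparrow$, with $\chi=\alpha\uparrow$ and $\gamma=\beta$, giving $\alpha\uparrow\,\beta\uparrow=((\alpha\uparrow\downarrow)\,\beta)\uparrow=(T(\alpha)\,\beta)\uparrow$; restricting gives $T(T(\alpha)\beta)$. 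The choice of which projection formula to use is dictated by which factor is to be pulled inside, which is exactly why the possible non-commutativity of $C(K)$ causes no trouble: the two outer terms of (\ref{albet}) correspond precisely to the two formulas, with the factor to be absorbed sitting on the matching side.

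The only point needing care --- and the one I would state explicitly rather than verify by computation --- is the multiplicativity of restriction. This is immediate from the tensor-functor property of $\mathrm{Res}$, but it is the lone ingredient not packaged into Lemma~\ref{pr}, and it is what lets me trade the product $T(\alpha)T(\beta)$ in $C(K)$ for a restricted product of inductions in $C(H)$. Once that is recorded, each half of (\ref{albet}) is a direct substitution of one projection formula, so I expect no genuine obstacle beyond keeping the left/right placement of factors consistent with the correct formula.
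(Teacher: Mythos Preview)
Your proposal is correct and follows essentially the same approach as the paper: both arguments hinge on the multiplicativity of restriction together with the projection formulas (\ref{eq2}) and (\ref{eq}). The only cosmetic difference is that the paper starts from $T(\alpha T(\beta))$ and unwinds forward to $T(\alpha)T(\beta)$, leaving the second equality to ``a similar way,'' whereas you start from the middle term $T(\alpha)T(\beta)=(\alpha\uparrow\,\beta\uparrow)\downarrow$ and push in both directions.
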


\begin{proof} One has
$$T(\alpha T(\beta))=(\alpha(\beta\uparrow\downarrow))\uparrow\downarrow
=(\alpha((\beta\uparrow)\downarrow)\uparrow)\downarrow
=(\alpha\uparrow\beta\uparrow)\downarrow
=\alpha\uparrow\downarrow\beta\uparrow\downarrow
$$%=T(\alpha)T(\beta)
We have applied relation \ref{eq} for the fourth equality and the fact that $\mathrm{Res}$ is an algebra map in the last equality. So the first equation in
the lemma is proved, and the other is obtained in a similar way.
\end{proof}

\begin{prop} \label{formulae} With the above notations one has

1) $T^n(\eps)=T(\eps)^n$ for all $n\geq 1$.

2) $T^n(\alpha)=T(\alpha)T(\eps)^{n-1}$ for all $n\geq 1$.
\end{prop}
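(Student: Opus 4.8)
The plan is to derive both identities from the single relation (\ref{albet}) by specializing one of its arguments to $\eps$, after observing that $\eps$ (the character of the trivial $K$-module) is the multiplicative identity of $C(K)$: tensoring any module with the trivial module returns the same module, so $\eps\gamma=\gamma$ for every $\gamma\in C(K)$. Taking $\beta=\eps$ in the two halves of (\ref{albet}) then yields the two working identities I will use throughout. From $T(T(\alpha)\beta)=T(\alpha)T(\beta)$ I get $T(T(\alpha))=T(\alpha)T(\eps)$, and from $T(\alpha\, T(\beta))=T(\alpha)T(\beta)$ I get $T(\gamma\, T(\eps))=T(\gamma)T(\eps)$ for every $\gamma\in C(K)$. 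The second of these says that right multiplication by $T(\eps)$ commutes with $T$.

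Next I would establish the auxiliary claim that $T\big(T(\alpha)T(\eps)^{k}\big)=T(\alpha)T(\eps)^{k+1}$ for all $k\geq 0$, by induction on $k$. The base case $k=0$ is exactly $T(T(\alpha))=T(\alpha)T(\eps)$. For the inductive step I would write $T(\alpha)T(\eps)^{k+1}=\big(T(\alpha)T(\eps)^{k}\big)T(\eps)$ and apply $T(\gamma\,T(\eps))=T(\gamma)T(\eps)$ with $\gamma=T(\alpha)T(\eps)^{k}$, followed by the inductive hypothesis.

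With the auxiliary claim in hand, statement 2) follows by a second induction, this time on $n$: the case $n=1$ is immediate since $T(\eps)^{0}=\eps$ is the unit, and the step $T^{n+1}(\alpha)=T\big(T^{n}(\alpha)\big)=T\big(T(\alpha)T(\eps)^{n-1}\big)=T(\alpha)T(\eps)^{n}$ uses first the inductive hypothesis and then the auxiliary claim with $k=n-1$. Statement 1) is then the special case $\alpha=\eps$ of statement 2), giving $T^{n}(\eps)=T(\eps)T(\eps)^{n-1}=T(\eps)^{n}$. I do not expect a genuine obstacle here: the only points requiring care are the bookkeeping around the unit $\eps$ and the convention $T(\eps)^{0}=\eps$, together with checking that each invocation of (\ref{albet}) is to the correct specialization; everything else is a routine double induction.
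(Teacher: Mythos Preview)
Your proposal is correct and follows essentially the same route as the paper: both derive part 2) by induction on $n$ using the identity (\ref{albet}) specialized at $\eps$, and then obtain part 1) as the special case $\alpha=\eps$. The only difference is organizational: you isolate and prove the auxiliary claim $T(T(\alpha)T(\eps)^k)=T(\alpha)T(\eps)^{k+1}$ by its own induction on $k$, whereas the paper compresses this into the single line $T(T(\alpha)T(\eps)^{n-1})=T(T(\alpha))T(\eps)^{n-1}$ justified ``by induction and the preceding lemma'' --- your version simply makes that step explicit.
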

\begin{proof}
The first part is a special case of the second one. We prove the second by
induction on $n$. The case $n=1$ is trivial. Suppose that
$T^n(\alpha)=T(\alpha)T(\eps)^{n-1}$ for some $n$. Then
\begin{eqnarray*}
T^{n+1}(\alpha) &= &T(T^n(\alpha)) = T(T(\alpha)T(\eps)^{n-1}) = T(T(\alpha))
T(\eps)^{n-1}\\
&= &T(T(\alpha)\eps)T(\eps)^{n-1} = T(\alpha)T(\eps)^n,
\end{eqnarray*}
by induction and the preceding lemma.
\end{proof}

\begin{lemma}\label{dist}
Let $\alpha$ and $\beta$ be two irreducible characters of $K$. Then $0<d(\alpha,\;\beta)\leq m$ if and only if $m_K(\alpha,\; T^m(\beta))>0$.
\end{lemma}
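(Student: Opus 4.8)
The plan is to translate both sides of the claimed equivalence into statements about the matrix $\mathcal{S} = MM^t$ and then to invoke Proposition~\ref{prop-entries2}, which already relates positivity of the entries of powers of $\mathcal{S}$ to the distance function $d$. The essential input is the remark recorded just before the lemma, namely that the matrix of the operator $T$ in the basis of irreducible $K$-characters is exactly $\mathcal{S}$. I also use the identification, from the subsection on distance between modules, of $d(\alpha,\beta)$ between two irreducible $K$-characters with the distance $d(W_i,W_j)$ between the corresponding simple $K$-modules, so that Proposition~\ref{prop-entries2} applies verbatim to characters.

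First I would fix an enumeration $\alpha_1, \ldots, \alpha_r$ of the irreducible characters of $K$ and write $\alpha = \alpha_i$, $\beta = \alpha_j$. Since the matrix of $T$ is $\mathcal{S}$, the matrix of $T^m$ is $\mathcal{S}^m$, so that $T^m(\alpha_j) = \sum_{k} (\mathcal{S}^m)_{kj}\, \alpha_k$. The form $m_K(\,\cdot\,,\,\cdot\,)$ makes the irreducible characters orthonormal, i.e.\ $m_K(\alpha_i, \alpha_k) = \delta_{ik}$; expanding $T^m(\alpha_j)$ and using linearity then yields the key identity
\[
m_K(\alpha_i,\, T^m(\alpha_j)) = (\mathcal{S}^m)_{ij}.
\]
This reduces the statement $m_K(\alpha, T^m(\beta)) > 0$ to the positivity of the $(i,j)$-entry of $\mathcal{S}^m$.

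With this identity in hand, the lemma follows immediately. Reading \emph{two irreducible characters} as meaning $\alpha \neq \beta$, i.e.\ $i \neq j$, Proposition~\ref{prop-entries2} asserts precisely that $(\mathcal{S}^m)_{ij} > 0$ if and only if $0 < d(\alpha_i, \alpha_j) \leq m$. Combining this with the displayed identity gives the claimed equivalence $0 < d(\alpha, \beta) \leq m$ if and only if $m_K(\alpha, T^m(\beta)) > 0$.

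I expect the only point requiring care to be the identification $m_K(\alpha_i, T^m(\alpha_j)) = (\mathcal{S}^m)_{ij}$, and in particular the bookkeeping ensuring that the matrix of $T$ coincides with $\mathcal{S}$ rather than its transpose. This works out precisely because $\mathcal{S}$ is symmetric, so it is harmless whether one records $T(\alpha_i) = \sum_k s_{ik}\alpha_k$ as a row or as a column of the representing matrix. The diagonal case $i = j$ is genuinely excluded by the hypothesis: there one has $d(\alpha_i,\alpha_i) = 0$ while $(\mathcal{S}^m)_{ii} > 0$ always, so the equivalence holds only under the assumption that $\alpha$ and $\beta$ are distinct.
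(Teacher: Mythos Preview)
Your proof is correct and is exactly the paper's approach, only spelled out in more detail: the paper's proof is the single sentence that the claim follows from Proposition~\ref{prop-entries2} because the matrix of $T$ in the basis of irreducible $K$-characters is $\mathcal{S}$. Your explicit computation $m_K(\alpha_i,T^m(\alpha_j))=(\mathcal{S}^m)_{ij}$ and your remark on the necessity of $\alpha\neq\beta$ are useful clarifications the paper leaves implicit.
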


\begin{proof}
This follows from Proposition~\ref{prop-entries2} since as noted before the matrix of the operator $T$ with respect to the basis of $C(K)$ given by the irreducible characters of $K$ is the matrix $\mathcal{S}$ defined in the previous section.\end{proof}

%By definition of $\sim$ one has that $\alpha \sim \beta$ if and only if $m_K(\alpha,\; T(\beta))>0$. Note that $\alpha$ is a constituent of $T(\alpha)$ for any irreducible character $\alpha$. Also if $\alpha$ is a constituent of $\beta$ then clearly $T(\alpha)$ is a constituent of $T(\beta)$. If $\alpha \sim \beta$ and $ \beta \sim \gamma $ then $\alpha$ is a constituent of $T(\beta)$ and $\beta$ is a constituent of $T(\gamma)$. Therefore $\alpha$ is a constituent of $T^2(\gamma)$.

\subsection{The operator $U$}
Define $U:C(H) \rightarrow C(H) $ such that $U(\chi)=\chi\downarrow_K^H\uparrow^H_K$. It follows from Eq.~(\ref{eq}) for $\alpha=\eps_H$ that $U^m(\chi)=\chi(\eps\uparrow_K^H)^m$ for all $m \geq 0$. Recall the equivalence relation $u^H_K$ on $\mathrm{Irr}(H)$ from Section~\ref{mult}. One has $\chi \sim \mu$ if and only if $\chi \downarrow_K^H$ and $\mu\downarrow_K^H$ have a common constituent. Then $u^H_K$ is the equivalence relation obtained by taking the transitive closure of $\sim$.
\begin{remark}\label{eqformula}\normalfont
Note that $\chi \sim \mu$ if and only if $m_{ _H}(\chi,\;U(\mu))>0$. Inductively it can be shown that $\chi\; u^H_K\; \mu$ if and only if there is $l >0$ such that $m(\chi,\;U^l(\mu))>0$.
\end{remark}

\begin{prop}\label{car}
A Hopf subalgebra $K$ is normal in $H$ if and only if $\eps_K$ by itself forms an equivalence class of $d^H_K$. \end{prop}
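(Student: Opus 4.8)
The plan is to translate the combinatorial condition on $d^H_K$ into the ideal equality $HK^+=K^+H$, exploiting the special role of $\eps_K$ as the trivial character. First I would observe that ``$\eps_K$ is a singleton $\approx$-class'' is the same as ``$\eps_K$ is $\sim$-related to nothing but itself'': since $\sim$ is reflexive and symmetric and $\approx$ is its transitive closure, any chain $\eps_K\sim\gamma_1\sim\gamma_2\sim\cdots$ starting at a $\sim$-isolated $\eps_K$ collapses ($\gamma_1=\eps_K$ forces $\gamma_2=\eps_K$, and so on), so the two notions of isolation coincide. This lets me work entirely with the single relation $\sim$, i.e.\ with one application of the operator $T$ of this section rather than with all its powers.

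Next I would identify the $\sim$-neighbours of $\eps_K$. By Frobenius reciprocity, $\eps_K$ is a constituent of $V\downarrow_K$ exactly when $V$ is a constituent of $\eps_K\uparrow^H_K$, and there is a canonical isomorphism of left $H$-modules $\eps_K\uparrow^H_K=H\ot_K\mathbb{C}_{\eps}\cong H/HK^+$. Hence a nontrivial $\sim$-neighbour $\beta\neq\eps_K$ exists if and only if some simple constituent $V$ of $H/HK^+$ has restriction $V\downarrow_K$ that is not $\eps$-isotypic, equivalently on which $K^+$ does not act as zero; in the notation of this section this says precisely that the $\sim$-class of $\eps_K$ is the set of constituents of $T(\eps)=(H/HK^+)\downarrow_K$. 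Now the key step uses semisimplicity of $H$: the module $H/HK^+$ is semisimple, so $K^+$ annihilates every simple constituent if and only if it annihilates the whole module $H/HK^+$, and the latter says exactly $K^+H\subseteq HK^+$. Thus $\eps_K$ is isolated $\iff T(\eps)$ is $\eps$-isotypic $\iff K^+H\subseteq HK^+$.

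Finally I would upgrade this one-sided containment to genuine normality. Here the Hopf structure is essential: the antipode $S$ is a bijective anti-automorphism satisfying $\eps\circ S=\eps$, so $S(K^+)=K^+$, and therefore $S(HK^+)=S(K^+)S(H)=K^+H$ while $S(K^+H)=HK^+$. Applying the injective map $S$ to $K^+H\subseteq HK^+$ gives $HK^+\subseteq K^+H$, whence $HK^+=K^+H$ and $K$ is normal. The converse is immediate: normality yields $K^+H\subseteq HK^+$, so by the chain of equivalences above $\eps_K$ is isolated, i.e.\ forms a singleton $d^H_K$-class.

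I expect the main obstacle to be this last upgrade rather than the module identifications: passing from the single inclusion $K^+H\subseteq HK^+$ to the two-sided equality is exactly the point where the coalgebra data (a bijective antipode fixing $K^+$ and swapping the two one-sided ideals) is indispensable, since no purely associative-algebra argument forces the reverse inclusion. A secondary point to check carefully is the semisimplicity reduction, namely that ``$K^+$ kills each simple constituent of $H/HK^+$'' is genuinely equivalent to ``$K^+$ kills $H/HK^+$'', which is where the hypothesis that $H$ is semisimple is used.
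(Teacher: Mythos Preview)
Your argument is correct, and it takes a genuinely different route from the paper's.

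The paper's one-line proof appeals to the idempotent decomposition of $\mathcal{Z}(H)\cap K$ established earlier (via Proposition~\ref{samepart}, using freeness of $H$ over $K$): the $d^H_K$-class of $\eps_K$ is one of the sets $\mathcal{B}_i$, so $\{\eps_K\}$ is a class precisely when the primitive central idempotent $f_{\eps_K}=\Lambda_K$ (the integral of $K$) lies in $\mathcal{Z}(H)\cap K$, i.e.\ is central in $H$; the equivalence of ``$\Lambda_K$ central in $H$'' with normality is then quoted from Masuoka. Your approach bypasses both the $\mathcal{Z}(H)\cap K$ machinery and the external citation: you identify the $\sim$-neighbours of $\eps_K$ directly as the constituents of $(H/HK^+)\!\downarrow_K$, turn isolation into the one-sided containment $K^+H\subseteq HK^+$ via semisimplicity, and then use the antipode to symmetrize. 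This is more self-contained and makes the Hopf input (bijective $S$ with $S(K^+)=K^+$) explicit, at the cost of being longer; the paper's version is terse because the structural work has already been done upstream.
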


\begin{proof}
From the decomposition of $\mathcal{Z}(H)\cap K$ it follows that the integral element $\Lambda_{ _K}$ is central in $H$ \cite{ma}.
\end{proof}
\begin{remark}\label{grps}\normalfont
Suppose $H=\C G$ and $K=\C H$ for finite groups
$H \subset G$. Then $\mathcal{Z}(\C G)\cap \C H \subset\mathcal{Z}(\C N)$ where $N$ is the core of $H$ in $G$. This follows since a basis for $\mathcal{Z}(\C G)$ is given by $\sum_{g \in \mathcal{C}}g$ where $\mathcal{C}$ runs through all conjugacy classes of $G$.
\end{remark}

\section{Inclusion of group algebras} Let $H \subset G$ be an inclusion of finite groups and let $N:= \mathrm{core}_G(H)$
be the core of $H$ in $G$,  i.e., the largest normal subgroup in $G$ contained in $H$. We will use the short notations $u^G_H$ and $d^G_H$ for the equivalence relations $u^{\C G}_{\C H}$ and $d^{\C G}_{\C H}$
defined in subsections~\ref{you} and~\ref{dee}, respectively.

\begin{prop}\label{ker}
For all $n \geq 1$ one has that $\mathrm{ker}_G(U^n(\eps))=N$.
\end{prop}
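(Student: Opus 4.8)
The plan is to reduce the statement to a computation with the permutation character of $G$ on the cosets of $H$. First I would specialize the formula $U^m(\chi) = \chi\,(\eps\uparrow^G_H)^m$ recorded in the discussion of the operator $U$ (Section~4, with $K = \C H \subset \C G$) to the case $\chi = \eps$, the trivial character of $G$. Since $\eps\uparrow^G_H = \mathrm{Ind}_H^G(\eps)$ is exactly the permutation character $\pi$ of $G$ acting on the coset space $G/H$, and since multiplying by the trivial character of $G$ changes nothing, this gives $U^n(\eps) = \pi^n$ for every $n \ge 1$. Thus the proposition becomes the purely character-theoretic claim $\mathrm{ker}_G(\pi^n) = N$.

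Next I would record the two elementary facts about $\pi$ that make this work. The value $\pi(g)$ equals the number of cosets $xH$ fixed by $g$, so $\pi(g)$ is a non-negative integer with $0 \le \pi(g) \le \pi(1) = [G:H]$; and $\pi(g) = \pi(1)$ holds precisely when $g$ fixes every coset, i.e.\ when $x^{-1}gx \in H$ for all $x \in G$, which says exactly that $g \in \bigcap_{x \in G} xHx^{-1} = \mathrm{core}_G(H) = N$. In other words $\mathrm{ker}_G(\pi) = N$, which settles the case $n = 1$ and identifies the target of the computation.

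For general $n$ the key observation is that $\pi^n$ is the pointwise product of class functions, so $\pi^n(g) = \pi(g)^n$ and $\pi^n(1) = [G:H]^n$. Hence $g \in \mathrm{ker}_G(\pi^n)$ if and only if $\pi(g)^n = [G:H]^n$. Because $\pi(g)$ is a non-negative real with $0 \le \pi(g) \le [G:H]$ and $t \mapsto t^n$ is strictly increasing on $[0,\infty)$, the equation $\pi(g)^n = [G:H]^n$ forces $\pi(g) = [G:H]$. Therefore $\mathrm{ker}_G(\pi^n) = \mathrm{ker}_G(\pi) = N$, independently of $n$, which is the assertion.

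The computation is short, and the only point requiring care is the monotonicity-and-positivity step: I expect the main (mild) obstacle to be justifying that raising $\pi$ to the $n$-th power does not enlarge the kernel. This is false for general characters, whose values are arbitrary algebraic integers lying on circles of various radii, and it genuinely uses that a \emph{permutation} character takes non-negative real values bounded by its degree. Once the identification $U^n(\eps) = \pi^n$ is in place, no further input is needed.
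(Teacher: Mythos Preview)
Your proof is correct and follows the same overall strategy as the paper: identify $U^n(\eps)$ with the $n$-th power of the permutation character $\pi=\eps_H\uparrow^G$, and then show $\ker_G(\pi)=N$ via the coset action. The only difference is in the reduction step $\ker_G(\pi^n)=\ker_G(\pi)$: the paper obtains one inclusion from the general fact $\ker_G(\chi)\subseteq\ker_G(\chi^n)$ and the other from the observation that every constituent of $U(\eps)$ already occurs in $U^n(\eps)$, whereas you use the special feature that $\pi$ takes non-negative integer values bounded by $\pi(1)$, so $\pi(g)^n=\pi(1)^n$ forces $\pi(g)=\pi(1)$. Your argument is slightly more elementary for this particular character but, as you note, does not extend to arbitrary characters; the paper's constituent argument does.
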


\begin{proof}
Clearly $\mathrm{ker}_G(U^n(\eps))\subset \mathrm{ker}_G(U(\eps))$ since $U(\eps)$ has all the constituents inside $U^n(\eps)$. On the other hand $\mathrm{ker}_G(\chi^n) \supset \mathrm{ker}_G(\chi)$ for any character $\chi$. Thus $\mathrm{ker}_G(U^n(\eps))= \mathrm{ker}_G(U(\eps))$ for all $n \geq 1$. It remains to show $\mathrm{ker}_G(U(\eps))=N$. If $n \in N$ then one has that $n(g \ot_H 1)=g(g^{-1}ng)\ot_H1=g\ot_H1$ since $gng^{-1}\subset N\subset H$. Thus $N \subset \mathrm{ker}_G(U(\eps))$. On the other hand $xg \ot_H 1=g \ot_H 1$ implies that $xgH=gH$ and therefore $x \in gHg^{-1}$. Thus if $x \in \mathrm{ker}_G(U(\eps))$ then $x \in \cap_{g\in G}gHg^{-1}=N$.
\end{proof}

\begin{cor}\label{restr}
For all $n \geq 1$ one has that $\mathrm{ker}_H(T^n(\eps))=N$.
\end{cor}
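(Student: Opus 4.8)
The plan is to reduce the statement to the preceding Proposition~\ref{ker} by observing that $T^n(\eps_H)$ is nothing but the restriction to $H$ of $U^n(\eps_G)$, and that restricting a $G$-character to $H$ intersects its kernel with $H$. Throughout, write $\eps_G$ and $\eps_H$ for the trivial characters of $G$ and of $H$, so that the corollary asks for $\mathrm{ker}_H(T^n(\eps_H))$ while Proposition~\ref{ker} supplies $\mathrm{ker}_G(U^n(\eps_G))=N$.

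First I would record the key identity $T^n(\eps_H)=U^n(\eps_G)\downarrow^G_H$. By Proposition~\ref{formulae}(1) one has $T^n(\eps_H)=T(\eps_H)^n$, and by the defining formula $U^m(\chi)=\chi(\eps_H\uparrow^G_H)^m$ applied to $\chi=\eps_G$ one has $U^n(\eps_G)=(\eps_H\uparrow^G_H)^n$, the $n$-th power of the permutation character of $G$ on $G/H$. Since $\eps_G\downarrow^G_H=\eps_H$, the single step $T(\eps_H)=\eps_H\uparrow^G_H\downarrow^G_H=(\eps_H\uparrow^G_H)\downarrow^G_H=U(\eps_G)\downarrow^G_H$ is immediate, and because restriction of characters is a ring homomorphism it commutes with taking $n$-th powers. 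Hence $T^n(\eps_H)=(U(\eps_G)\downarrow^G_H)^n=U^n(\eps_G)\downarrow^G_H$, as claimed.

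It then remains to invoke the elementary fact that for any character $\chi$ of $G$ one has $\mathrm{ker}_H(\chi\downarrow^G_H)=\mathrm{ker}_G(\chi)\cap H$, since an element of $H$ acts as the identity in the restricted representation exactly when it does so in the original one. Applying this to $\chi=U^n(\eps_G)$ and using Proposition~\ref{ker} gives
$$\mathrm{ker}_H(T^n(\eps_H))=\mathrm{ker}_G(U^n(\eps_G))\cap H=N\cap H=N,$$
the last equality holding because $N\subseteq H$ by definition of the core. The whole argument is a direct reduction to Proposition~\ref{ker}; the only point needing a moment's care is the identity $T^n(\eps_H)=U^n(\eps_G)\downarrow^G_H$, and even there the sole ingredient beyond multiplicativity of restriction is that both iterates collapse to honest $n$-th powers via Proposition~\ref{formulae}(1). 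I therefore anticipate no genuine obstacle.
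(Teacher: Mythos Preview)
Your proof is correct and follows essentially the same route as the paper: reduce to Proposition~\ref{ker} via the identity $T^n(\eps_H)=U^n(\eps_G)\downarrow^G_H$, then intersect the kernel with $H$. The paper's proof is a single sentence asserting this identity without justification; your version supplies a clean derivation of it (via Proposition~\ref{formulae}(1), the formula $U^m(\chi)=\chi(\eps_H\uparrow)^m$, and multiplicativity of restriction), and makes explicit the elementary step $\mathrm{ker}_H(\chi\downarrow)=\mathrm{ker}_G(\chi)\cap H$ together with $N\subseteq H$.
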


\begin{proof}
It follows from the previous proposition since $U^n(\eps)\downarrow^G_H=T^n(\eps)$.
\end{proof}

\begin{cor}\label{epseqcls}
Let $H \subset G$ be a group inclusion and $N$ be the core of $H$ in $G$. Consider the equivalence relation $d_H^G$ on the irreducible characters of $H$ as above. Then the equivalence class of $\eps_H$ is $\mathrm{Irr}(H/N)$.
\end{cor}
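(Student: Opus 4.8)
The plan is to use Corollary~\ref{restr}, which identifies the core $N$ with the kernel $\mathrm{ker}_H(T^n(\eps))$ for every $n \geq 1$, together with Proposition~\ref{samepart}, which tells us that the equivalence classes of $d_H^G$ are exactly the index sets $\mathcal{B}_i$ appearing in the decomposition of the primitive idempotents of $\mathcal{Z}(\C G)\cap \C H$. The goal is to show that the single equivalence class of $d_H^G$ containing $\eps_H$ consists of precisely those irreducible characters of $H$ that factor through the quotient $H/N$, i.e. the class equals $\mathrm{Irr}(H/N)$.

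First I would fix notation: recall from Proposition~\ref{formulae} that $T^n(\eps) = T(\eps)^n$, and that $T(\eps) = \eps\uparrow\downarrow$, so the constituents of $T^n(\eps)$ are exactly the irreducible $H$-characters $\alpha$ with $m_K(\alpha, T^n(\eps)) > 0$. By Lemma~\ref{dist} (applied with $K = \C H$), such an $\alpha$ with $m_H(\alpha, T^n(\eps)) > 0$ is precisely one satisfying $d(\alpha, \eps_H) \leq n$ for some $n$; letting $n$ grow, $\alpha$ lies in the $d_H^G$-equivalence class of $\eps_H$ if and only if $\alpha$ is a constituent of $T^n(\eps)$ for $n$ large enough. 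Since the chain of supports stabilizes (the $A(\mathcal{S}^n)$ form an ascending chain that stabilizes, Remark~\ref{supersets}), the equivalence class of $\eps_H$ equals the set of irreducible constituents of $T^n(\eps)$ for all sufficiently large $n$.

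The key step is then to identify this set of constituents. An irreducible character $\alpha$ of $H$ is a constituent of some power $T(\eps)^n$ if and only if $\alpha$ lies in the $d_H^G$-class of $\eps$, and by Corollary~\ref{restr} every such $\alpha$ has $N \subseteq \mathrm{ker}_H(\alpha)$, because $\alpha$ being a constituent of $T^n(\eps)$ forces $\mathrm{ker}_H(\alpha) \supseteq \mathrm{ker}_H(T^n(\eps)) = N$. This shows the class of $\eps_H$ is contained in $\mathrm{Irr}(H/N) = \{\alpha \in \mathrm{Irr}(H) : N \subseteq \mathrm{ker}_H(\alpha)\}$. For the reverse inclusion I would argue that $\mathrm{Irr}(H/N)$ forms a single $d_H^G$-class: since $N = \mathrm{core}_G(H)$ is normal in $G$, the quotient situation $H/N \subset G/N$ has trivial core, and one checks that the characters of $H$ inflated from $H/N$ are exactly those whose induced-restricted orbit under $T$ stays within $\mathrm{Irr}(H/N)$, filling it out entirely because the regular character of $H/N$ appears in a suitable power of $T(\eps)$.

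The main obstacle will be the reverse inclusion, namely verifying that \emph{all} of $\mathrm{Irr}(H/N)$ lies in the single $d_H^G$-class of $\eps_H$, rather than splitting into several classes. The containment $N \subseteq \mathrm{ker}_H(\alpha)$ for class members is immediate from Corollary~\ref{restr}, but surjectivity onto $\mathrm{Irr}(H/N)$ requires knowing that every irreducible of $H/N$ connects back to the trivial character through the relation $\sim$; the natural route is to pass to the inclusion $H/N \subset G/N$, observe that its core is trivial, and invoke that for a trivial-core inclusion the $d$-class of the trivial character exhausts all of $\mathrm{Irr}$ of the subgroup — equivalently that $\eps$ generates everything under repeated application of $T$ precisely when the core is trivial, which is exactly the content of Proposition~\ref{ker} and Corollary~\ref{restr} specialized to $N$ trivial.
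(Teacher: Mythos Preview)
Your argument for the containment of the $d_H^G$-class of $\eps_H$ inside $\mathrm{Irr}(H/N)$ is correct and essentially matches the paper's: any $\beta$ in the class is a constituent of $T^m(\eps)$ for some $m$, and since $\ker_H(T^m(\eps)) = N$ by Corollary~\ref{restr}, we get $N \subseteq \ker_H(\beta)$.

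The reverse inclusion, however, has a genuine gap. You propose reducing to the trivial-core situation $H/N \subset G/N$ and then asserting that ``$\eps$ generates everything under repeated application of $T$ precisely when the core is trivial'' is ``exactly the content of Proposition~\ref{ker} and Corollary~\ref{restr} specialized to $N$ trivial.'' But those results only say that $T^n(\eps)$ is \emph{faithful} when the core is trivial; they do \emph{not} say that every irreducible of the subgroup occurs as a constituent. A faithful character need not contain all irreducibles (the two-dimensional irreducible of $S_3$, for instance). What is missing is the Burnside--Brauer theorem: if $\chi$ is faithful then every irreducible appears in some power $\chi^n$. Since $T^n(\eps) = T(\eps)^n$ by Proposition~\ref{formulae}, this is precisely the ingredient you need; in the paper it is Proposition~\ref{powerker}.

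The paper's own proof of the reverse inclusion avoids the quotient reduction and is more direct: any $\alpha \in \mathrm{Irr}(H/N)$ is a constituent of $\eps_N\uparrow^H_N$ (the regular character of $H/N$), hence $\alpha\uparrow^G_H$ is a summand of $\eps_N\uparrow^G_N$. The constituents of $\eps_N\uparrow^G_N$ coincide with those of $U^m(\eps_H)$ for large $m$ (again via Proposition~\ref{powerker}, since $\ker_G(U(\eps_H)) = N$), and restricting to $H$ exhibits $\alpha$ inside $T^m(\eps_H)$. So both routes ultimately rest on the same Burnside--Brauer input; you just need to invoke it explicitly rather than conflating ``faithful'' with ``contains all irreducibles.''
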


\begin{proof}
If $\alpha \in \mathrm{Irr}(H/N)$ then $\alpha$ is a constituent of $\eps_N\uparrow^H_N$. Therefore $\alpha\uparrow^G_H$ is a constituent of $\eps_N\uparrow^G_N=U^m(\eps_H)$. But $U^m(\eps_H)\downarrow^G_H$ has all the constituents inside the equivalence class of $\eps_H$. Since $\alpha$ is a constituent of $T(\alpha)$ it follows that $\alpha \approx \eps$. Thus all the irreducible characters of $\mathrm{Irr}(H/N)$ are equivalent to $\eps_H$.

Conversely suppose that $\beta \sim \eps$. Then $\beta$ is a constituent of $T(\eps)$ and therefore by Corollary~\ref{restr} its restriction to $N$ contains $\beta(1)$ copies of the trivial character of $N$. Thus $\beta \in \mathrm{Irr}(H/N)$.
\end{proof}

Let $m$ be minimal such that $T^m(\eps)$ has all the possible constituents of all powers $T^n(\eps)$ with $n \geq 0$. Such $m$ exists since there are only finitely many characters that can appear and if they appear in a certain power of $T$ then they appear in any greater power of $T$.

\begin{remark} \label{normal}\normalfont
 Since $N \triangleleft G$ it is well known that $\chi \;u^G_N \;\mu$ if and only if
$\chi$ and $\mu$ have exactly the same irreducible constituents viewed as
$N$-characters by restriction. It also follows from \cite{Bker} that
$U_N(\chi)=\chi\eps\uparrow_N^G$ where $U_N:C(G)\rightarrow C(G)$ is given by
$U_N(\chi)=\chi\downarrow^G_N\uparrow^G_N$. For any irreducible character $\alpha \in
\mathrm{Irr}(N)$ the constituents of $\alpha\uparrow_N^G$ form an entire equivalence
class under $d^G_N$. Similarly, for any irreducible character $\chi \in
\mathrm{Irr}(G)$ the constituents of $\chi\downarrow_N^G$ form an entire equivalence
class under $d^G_N$. Thus, by Clifford theory, the equivalence classes of $
\mathrm{Irr}(N)$ under $d_N^G$ are just the $G$-orbits on $\mathrm{Irr}(N)$
(under the conjugation action).
\end{remark}

The following result from \cite{Bker} will be used in the sequel:

\begin{prop}\label{powerker}
Let $G$ be a group, $\chi$ a character of $G$ and $N=\mathrm{ker}(\chi)$. Then
$\eps_N^G$ has as irreducible constituents all the possible irreducible constituents
of all the powers of $\chi$.
\end{prop}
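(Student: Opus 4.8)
The plan is to pass to the quotient $\bar G := G/N$ and reduce the statement to a classical theorem of Burnside about faithful characters. First I would identify both sides concretely. Since $N = \mathrm{ker}(\chi)$ is normal in $G$ (the kernel of any character is a normal subgroup), the induced character $\eps_N^G = \eps_N\uparrow_N^G$ is the permutation character of $G$ acting by left translation on the coset space $G/N$; because $N$ is normal this action is the regular action of $\bar G$ on itself, inflated to $G$. Hence $\eps_N^G$ is the inflation of the regular character $\rho_{\bar G}$, and its irreducible constituents are exactly the members of $\mathrm{Irr}(\bar G)$, regarded as the irreducible characters $\psi$ of $G$ with $N \subseteq \mathrm{ker}(\psi)$.

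Next comes the easy inclusion. For every $n \geq 0$ one has $N \subseteq \mathrm{ker}(\chi^n)$, because $\chi$ is constant equal to $\chi(1)$ on $N$ and hence so is $\chi^n$. Since the kernel of a character is the intersection of the kernels of its irreducible constituents, every irreducible constituent $\psi$ of $\chi^n$ satisfies $N \subseteq \mathrm{ker}(\psi)$, i.e. $\psi \in \mathrm{Irr}(\bar G)$. Thus every constituent of every power of $\chi$ already occurs in $\eps_N^G$, which gives one inclusion for free.

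The substance is the reverse inclusion: each $\psi \in \mathrm{Irr}(\bar G)$ must appear in some power of $\chi$. Here I would use that $\chi$ descends to a \emph{faithful} character $\bar\chi$ of $\bar G$ (faithful precisely because $\mathrm{ker}(\chi) = N$) and invoke Burnside's theorem: a faithful character of a finite group has every irreducible character of that group among the constituents of its powers. For a self-contained argument, let $a_1 = \bar\chi(1), a_2,\dots,a_m$ be the distinct values of $\bar\chi$ and set $C_i = \{x \in \bar G : \bar\chi(x) = a_i\}$, so that $C_1 = \mathrm{ker}(\bar\chi) = \{1\}$. Fixing $\psi \in \mathrm{Irr}(\bar G)$ and putting $b_i = \sum_{x \in C_i}\overline{\psi(x)}$, one gets $\langle \bar\chi^{\,n}, \psi\rangle = \frac{1}{|\bar G|}\sum_{i=1}^m a_i^{\,n} b_i$ for all $n$. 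If $\psi$ occurred in none of $\bar\chi^{0},\dots,\bar\chi^{\,m-1}$, these $m$ vanishing equations would force all $b_i = 0$ by invertibility of the Vandermonde matrix of the distinct scalars $a_i$; but $b_1 = \overline{\psi(1)} = \psi(1) \neq 0$, a contradiction. Hence $\psi$ occurs in some $\bar\chi^{\,n}$, which inflates to an occurrence in $\chi^n$.

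Combining the two inclusions identifies the irreducible constituents of $\eps_N^G$ with exactly those occurring across all powers of $\chi$, as asserted. I expect the only real obstacle to be the reverse inclusion, i.e. the Burnside/Vandermonde step; the identification of $\eps_N^G$ with the inflated regular character and the kernel bookkeeping of the first two paragraphs are routine.
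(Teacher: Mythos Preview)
Your proof is correct. The paper itself does not prove this proposition; it simply imports the result from \cite{Bker} with the sentence ``The following result from \cite{Bker} will be used in the sequel.'' Your argument is exactly the classical one: identify $\eps_N^G$ with the inflated regular character of $G/N$, observe that every constituent of any $\chi^n$ has $N$ in its kernel, and then use Burnside's theorem (via the Vandermonde determinant) on the faithful character $\bar\chi$ of $G/N$ to get the reverse inclusion. There is nothing to compare on the paper's side, and your write-up covers both inclusions cleanly.
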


\begin{cor}\label{sameeqcls}
The equivalence relation $u^G_H$ is the same as the equivalence relation $u^G_N$ coming from $N\trianglelefteq G$. Thus the equivalence classes of $\mathrm{Irr}(G)$
under $u_H^G$ are in natural bijection with the $G$-orbits on $\mathrm{Irr}(N)$.
\end{cor}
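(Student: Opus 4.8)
The plan is to reduce both equivalence relations to a single combinatorial condition on the constituents of the product $\chi\bar\mu$, and then to observe that this condition depends only on the core $N$. First I would record the operator descriptions of the two relations. By the computation for the operator $U$ one has $U^l(\mu)=\mu\,(\eps_H\!\uparrow_H^G)^l$, and by Remark~\ref{normal} the analogous operator for the normal subgroup satisfies $U_N^l(\mu)=\mu\,(\eps_N\!\uparrow_N^G)^l$. Writing $\pi=\eps_H\!\uparrow_H^G$ and $\sigma=\eps_N\!\uparrow_N^G$ for the two permutation characters, Remark~\ref{eqformula} (applied to $H\subset G$ and to $N\subset G$) says that $\chi\;u^G_H\;\mu$ iff $\chi$ is a constituent of $\mu\pi^l$ for some $l>0$, while $\chi\;u^G_N\;\mu$ iff $\chi$ is a constituent of $\mu\sigma^l$ for some $l>0$.

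Next I would use the adjunction $m_G(\chi,\mu\theta)=m_G(\chi\bar\mu,\theta)$, valid for any character $\theta$, so that $\chi$ is a constituent of $\mu\theta^l$ for some $l$ if and only if $\chi\bar\mu$ shares an irreducible constituent with some power $\theta^l$, i.e.\ with the set $\bigcup_{l\geq 1}\mathrm{const}(\theta^l)$. Hence both relations are governed entirely by these power-closures: $\chi\;u^G_H\;\mu$ iff $\chi\bar\mu$ has a constituent in $\bigcup_l\mathrm{const}(\pi^l)$, and likewise with $\sigma$ in place of $\pi$. It therefore suffices to prove that the two power-closures coincide.

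This coincidence is the crux, and is where I expect the real work. By Proposition~\ref{ker} the kernel of $\pi=U(\eps)$ is exactly $N$, and the same proposition applied to the inclusion $N\subset G$ (whose core is $N$ itself) shows $\ker_G\sigma=N$. Applying Proposition~\ref{powerker} with $\chi=\pi$, so that $\ker\pi=N$, shows that $\eps_N\!\uparrow_N^G=\sigma$ contains precisely the irreducible constituents of all powers of $\pi$; applying it with $\chi=\sigma$ shows likewise that the constituents of all powers of $\sigma$ are exactly those of $\sigma$ itself. Hence both power-closures equal $\mathrm{const}(\sigma)=\mathrm{Irr}(G/N)$, and the two relations coincide: $u^G_H=u^G_N$.

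Finally, for the asserted bijection I would invoke Remark~\ref{normal}: since $N\trianglelefteq G$, Clifford theory identifies the $u^G_N$-classes on $\mathrm{Irr}(G)$ with the $G$-orbits on $\mathrm{Irr}(N)$, a class being the set of irreducibles of $G$ whose restriction to $N$ is supported on a fixed orbit. Combined with the equality $u^G_H=u^G_N$ this yields the corollary. The only genuinely delicate step is the identification of the two power-closures via Propositions~\ref{ker} and~\ref{powerker}; the adjunction reduction in the second paragraph and the closing Clifford-theoretic bijection are formal given the quoted results.
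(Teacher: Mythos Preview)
Your argument is correct and rests on the same key ingredient as the paper's proof, namely Proposition~\ref{powerker} together with Proposition~\ref{ker} identifying $\ker(\eps_H\!\uparrow^G)=N$. The only difference is organizational: the paper dispatches the implication $u^G_H\Rightarrow u^G_N$ trivially from $N\subseteq H$ (a common constituent over $H$ restricts to one over $N$) and then uses Proposition~\ref{powerker} only for the converse, whereas you treat both directions symmetrically by reducing each relation to the condition that $\chi\bar\mu$ meet the power-closure of the relevant permutation character and then proving the two power-closures coincide. Your route is slightly cleaner in that it avoids the asymmetry, at the cost of invoking Proposition~\ref{powerker} twice rather than once; neither approach has a real advantage in substance.
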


\begin{proof}
Write as above $\chi \sim \mu$ if and only if $\chi\downarrow^G_H$ and $\mu\downarrow^G_H$ have a common constituent. If $\chi \sim \mu$ then clearly $\chi \;u^G_N \; \mu$. This implies that if $\chi \;u^G_H\; \mu$ then $\chi \;u^G_N \; \mu$. Conversely by Remark~\ref{eqformula} we see that $\chi \;u^G_N \; \mu$ if and only if $m_{ _G}(\chi, U_N(\mu))>0$. Remark~\ref{normal} implies that $U_N(\mu)=\mu\eps_N\uparrow_N^G$. On the other hand, using the previous proposition it follows that $m_{ _G}(\chi, \;U_N(\mu))=m_{ _G}(\chi, \mu\eps^G_N)= (\eps_N\uparrow_N^G,\;\mu\chi^*)>0$ if and only if $m_G(U^{m}(\eps),\;\chi\mu^*)=m_G(\chi, U^m(\mu))>0$.
\end{proof}

\begin{cor}
One has that the relation $\sim$ on $\mathrm{Irr}(G)$ coming from the inclusion $H\subset G$ is an equivalence relation if and only if $\eps\uparrow_{N}^G$ and $\eps\uparrow_H^G$ have the same constituents.
\end{cor}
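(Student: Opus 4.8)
The plan is to translate both sides of the asserted equivalence into statements about the two permutation characters, using the operators $U = U^G_H$ and $U_N$ and the dictionary already assembled. Write $P_H := \eps\uparrow_H^G = U(\eps)$ and $P_N := \eps\uparrow_N^G = U_N(\eps)$. Recall from Remark~\ref{eqformula} that $\chi \sim \mu$ (the relation on $\mathrm{Irr}(G)$ coming from $H \subset G$) holds precisely when $\chi$ is a constituent of $U(\mu) = \mu P_H$, and that $\sim$ is an equivalence relation exactly when it coincides with its transitive closure $u^G_H$. By Corollary~\ref{sameeqcls} the latter equals $u^G_N$, and by Remark~\ref{normal} (Clifford theory for the normal subgroup $N$) the generating relation of $u^G_N$ is already transitive, so $\chi\, u^G_N\, \mu$ holds precisely when $\chi$ is a constituent of $U_N(\mu) = \mu P_N$. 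Thus the whole statement reduces to comparing, over all pairs $\chi,\mu$, membership of $\chi$ in the constituents of $\mu P_H$ versus those of $\mu P_N$.

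For the forward direction I would assume $\sim$ is an equivalence relation, so that $\sim = u^G_H = u^G_N$. Specializing the two one-step descriptions to $\mu = \eps$, and using $\eps P_H = P_H$ and $\eps P_N = P_N$, shows that the constituents of $P_N$ are exactly the $\chi$ with $\chi\, u^G_N\, \eps$, while the constituents of $P_H$ are exactly the $\chi$ with $\chi \sim \eps$. Since $\sim$ and $u^G_N$ agree, these two sets coincide, which is precisely the assertion that $\eps\uparrow_N^G$ and $\eps\uparrow_H^G$ have the same constituents.

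For the converse I would first record the containment $P_H \leq P_N$: by induction in stages $P_N = (\eps\uparrow_N^H)\uparrow_H^G$, and $\eps_H$ is a constituent of $\eps\uparrow_N^H$, so inducing up gives $\eps\uparrow_H^G = P_H \leq P_N$. Hence the constituents of $P_H$ always form a subset of those of $P_N$, and the hypothesis that the two agree means $P_N \leq c\,P_H$ for some $c \in \Z_+$ (equal supports among non-negative integer combinations of irreducibles). Multiplying by an arbitrary character $\mu$ then sandwiches $\mu P_H \leq \mu P_N \leq c\,\mu P_H$, so $\mu P_H$ and $\mu P_N$ have the same constituents for every $\mu$. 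Feeding this back into the one-step descriptions yields that $\chi \sim \mu$ holds if and only if $\chi\, u^G_N\, \mu$ holds, for all $\chi,\mu$; since $u^G_N$ is transitive, $\sim$ is transitive too, i.e. $\sim$ is an equivalence relation.

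The hard part will be the passage from ``$P_H$ and $P_N$ have the same constituents'' to ``$\mu P_H$ and $\mu P_N$ have the same constituents for every $\mu$'', since in general multiplying two characters with equal support need not preserve support. The clean way around it is exactly the domination $P_N \leq c\,P_H$, which is available only because of the a priori containment $P_H \leq P_N$; after that, multiplication by $\mu$ is monotone and preserves the common support. Everything else is bookkeeping with the established correspondence between $\sim$, the relation $u^G_H = u^G_N$, and products with the permutation characters $\eps\uparrow_H^G$ and $\eps\uparrow_N^G$.
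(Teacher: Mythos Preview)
Your proof is correct. The paper's argument is more compressed: it first observes that $\sim$ is an equivalence relation iff $\eps\uparrow_H^G$ and $(\eps\uparrow_H^G)^m$ have the same constituents (where $m$ is the stabilizing index), and then invokes Propositions~\ref{ker} and~\ref{powerker} to identify the constituents of $(\eps\uparrow_H^G)^m$ with those of $\eps\uparrow_N^G$. You bypass the intermediary $(\eps\uparrow_H^G)^m$ entirely, instead using Corollary~\ref{sameeqcls} and Remark~\ref{normal} to compare $\sim$ directly against the one-step (already transitive) relation coming from $N$, and you make the domination $P_H \le P_N \le cP_H$ explicit to pass from equality of supports to equality after multiplying by $\mu$. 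The underlying content is the same; your packaging is a bit more transparent about why the converse works, whereas the paper leaves that domination argument implicit.
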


\begin{proof}
$\sim$ is an equivalence relation if and only if $\eps\uparrow_H^G$ and $(\eps\uparrow_H^G)^m$ have the same constituents. But $(\eps_H\uparrow_H^G)^m$ has the same constituents as $\eps_N\uparrow_{N}^G$.
\end{proof}

\begin{prop}\label{factor}
Let $N \subset H \subset G$ with $N \lhd G$.
The depth of $H/N$ inside $G/N$ is less than or equal to the depth of $H$ in $G$. If $H$ has depth three or less in $G$ then $H/N$ has depth three or less in $G/N$.
\end{prop}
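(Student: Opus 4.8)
The plan is to realize the inclusion matrix $\bar M$ of $H/N \subset G/N$ as a diagonal block of the inclusion matrix $M$ of $H \subset G$, and then to read the depth comparison off the zero-pattern criterion of Corollary~\ref{cor-stabilization}. First I would identify $\mathrm{Irr}(G/N)$ (respectively $\mathrm{Irr}(H/N)$) with the subset of $\mathrm{Irr}(G)$ (respectively $\mathrm{Irr}(H)$) of characters whose kernel contains $N$, via inflation. Since $N \subseteq H$, restriction from $G$ to $H$ commutes with inflation from $G/N$, so for $\bar\chi \in \mathrm{Irr}(G/N)$ and $\bar\psi \in \mathrm{Irr}(H/N)$ the multiplicity $[\bar\psi,\,\bar\chi\!\downarrow_{H/N}]$ computed in $G/N$ equals the multiplicity $[\psi,\,\chi\!\downarrow_H]$ computed in $G$, where $\chi,\psi$ denote the inflations. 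Thus $\bar M$ is precisely the submatrix of $M$ indexed by the $N$-trivial rows and columns.

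The key step is the Clifford-theoretic observation that this submatrix is in fact a diagonal block, i.e.\ that $M$ decomposes as $\mathrm{diag}(\bar M, Y)$ after permuting the rows of $\mathrm{Irr}(H)$ and the columns of $\mathrm{Irr}(G)$ according to the partition into $N$-trivial and non-$N$-trivial characters. Concretely, if $\psi \in \mathrm{Irr}(H)$ satisfies $N \subseteq \ker\psi$ and $\psi$ is a constituent of $\chi\!\downarrow_H$ for some $\chi \in \mathrm{Irr}(G)$, then $\chi\!\downarrow_N$ contains the trivial character of $N$; since $N \lhd G$ the trivial character is $G$-invariant, so its $G$-orbit is a singleton, and Clifford's theorem forces $\chi\!\downarrow_N$ to be a multiple of the trivial character, whence $N \subseteq \ker\chi$. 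Symmetrically, every constituent of the restriction of an $N$-trivial $\chi$ is again $N$-trivial. Hence $M$ has no nonzero entry linking an $N$-trivial index to a non-$N$-trivial one, which is exactly the vanishing of both off-diagonal blocks.

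Once $M = \mathrm{diag}(\bar M, Y)$ is established, the powers respect the decomposition, $M^{k} = \mathrm{diag}(\bar M^{k}, Y^{k})$ for every $k$ (in the alternating sense of Section~2), since $\mathcal{S} = MM^t = \mathrm{diag}(\bar M \bar M^t,\, YY^t)$. Consequently the nonzero-entry set splits as a disjoint union $A(M^{k}) = A(\bar M^{k}) \sqcup A(Y^{k})$, so the stabilization equality $A(M^{n-1}) = A(M^{n+1})$ holds if and only if the corresponding equalities hold simultaneously for $\bar M$ and for $Y$. By Corollary~\ref{cor-stabilization} the depth of $M$ is therefore the maximum of the depths of the two blocks, and in particular $\mathrm{depth}(\bar M) \le \mathrm{depth}(M)$, which is the first assertion. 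Equivalently, block-diagonality makes the $\sim$-graph on the $N$-trivial simples a union of connected components coinciding with the $\sim$-graph of $H/N \subset G/N$, so the graphical distances agree on this subset and Theorems~\ref{odddepth} and~\ref{evendepth} yield the same inequality.

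The second assertion is then immediate from the first, since $\mathrm{depth}(H/N \subset G/N) \le \mathrm{depth}(H \subset G) \le 3$; alternatively it follows from Corollary~\ref{d3}, as transitivity of $\sim$ on $\mathrm{Irr}(G)$ restricts to transitivity of $\sim$ on the subset $\mathrm{Irr}(G/N)$. The only genuine obstacle is the Clifford-theoretic block-diagonality step of the second paragraph; once the off-diagonal blocks are shown to vanish, everything reduces to routine bookkeeping with the zero-patterns of the block-diagonal powers.
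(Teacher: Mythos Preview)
Your proof is correct and follows essentially the same idea as the paper's. The paper's argument is very terse: it simply notes that the operator $\bar T$ for $H/N\subset G/N$ is the restriction of $T$ to $C(H/N)$ (because induction and restriction for the quotient pair arise from those for $H\subset G$ via inflation), and then invokes Theorems~\ref{odddepth} and~\ref{evendepth}. Your block-diagonality of $M$ is exactly the matrix translation of that invariance statement, and your Clifford-theoretic paragraph makes explicit what the paper leaves implicit. The one cosmetic difference is that you finish via Corollary~\ref{cor-stabilization} on zero-patterns (even obtaining $\mathrm{depth}(M)=\max\{\mathrm{depth}(\bar M),\mathrm{depth}(Y)\}$, slightly more than needed), whereas the paper appeals to the distance criteria; you yourself note that route as an alternative, so there is no substantive divergence.
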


\begin{proof}
Let $\bar{T}:C(H/N) \rightarrow C(H/N)$ be
the operator $T$ defined as above but for the inclusion $H/N \subset G/N$. Since $Rep(G/N) \subset Rep(G)$ and $Rep(H/N) \subset Rep(H)$ it is easy to check that $\bar{T}$ is the restriction of $T$ to $C(H/N)$. Indeed both restriction and induction for the inclusion $H/N \subset G/N$ come from the restriction and induction for the inclusion $H \subset G$.

Then the proposition follows from Theorem~\ref{odddepth} and
Theorem~\ref{evendepth}.
\end{proof}

For example, with $G = S_4$, $H = D_8$ and
$$N = \{ (1), (12)(34), (13)(24), (14)(23)\} = V_4,$$
depth of $H < G$ is four (computed in Section~2),
while depth of $H/N \cong S_2 < G/N \cong S_3$ is three (computed graphically in Section~3).

Again let $G$ be a finite group, $H$ a proper subgroup of $G$, and $N := \mathrm{core}_G(H)$ denote the core of $H$ in $G$.  We say $\chi \in \mathrm{Irr}(G)$
and $\psi \in \mathrm{Irr}(H)$ are \textit{linked} if
\begin{equation}
0 \neq \bra \psi\uparrow^G| \chi \ket_G = \bra \psi| \chi\downarrow_H\ket_H.
\end{equation}
This defines a bipartite graph $\Gamma$ with vertices $\mathrm{Irr}(G) \cup \mathrm{Irr}(H)$ (the inclusion diagram
of the corresponding group algebras are a weighted
multigraph variant of this).
As usual, we denote by $\mathrm{Irr}(G|\kappa)$ the set
of all $\chi \in \mathrm{Irr}(G)$ such that $\langle \chi \downarrow_N, \kappa \rangle
\ne 0$, for $\kappa \in {\rm Irr}(N)$.

Proposition~\ref{sameeqcls} implies that the connected components of $\Gamma$ are
in bijection with the orbits of $G$ on $\mathrm{Irr}(N)$.

\subsection{A theorem with examples}

Recall that the \textit{core} $\mathrm{core}_G(H)$ of a subgroup $H < G$ is the largest normal subgroup of $G$ contained in $H$.  It is also defined by
$\mathrm{core}_G(H) = \bigcap_{x \in G} {^x}H$ where ${^x}H$ denotes
the subgroup $xHx^{-1}$ conjugate to $H$.

\begin{theorem}\label{intersection}
Let $H \subseteq G$ be an inclusion of finite groups, and suppose that $N:=\mathrm{core}_G(H)$ is the intersection of $m$
conjugates of $H$. Then $H$ has depth $\leq 2m$ in $G$. Moreover, if $N \subseteq
\mathcal{Z}(G)$ then $H$ has depth $\leq 2m-1$ in $G$.
\end{theorem}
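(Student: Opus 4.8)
The plan is to convert both inequalities into the distance criteria of Theorems~\ref{odddepth} and~\ref{evendepth} and to compute the relevant characters $T^{k}(-)$ as restrictions to $H$ of tensor powers of the permutation character $\eps_H\uparrow^G_H$. The basic identity is $T^{k}(\eps_H)=(\eps_H\uparrow^G_H)^{k}\downarrow^G_H$, the character of the $H$-set $(G/H)^{k}$; this follows from $U^k(\eps)=(\eps_H\uparrow^G_H)^k$ together with $U^k(\eps)\downarrow^G_H=T^k(\eps)$ (as in the proof of Corollary~\ref{restr}) and Proposition~\ref{formulae}. After conjugating so that one of the $m$ conjugates is $H$ itself (legitimate since $N$ is normal), write $N=H\cap{}^{g_2}H\cap\cdots\cap{}^{g_m}H$. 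Then $(g_2H,\ldots,g_mH)\in(G/H)^{m-1}$ has $H$-stabilizer exactly $N$, so its $H$-orbit is $H/N$ and $\eps_N\uparrow^H_N$ is a direct summand of $T^{m-1}(\eps_H)$.

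For the bound depth $\le 2m$ I would fix an $A$-simple $V_u$ with character $\chi$ and use $T^{m-1}(\chi\downarrow_H)=\chi\downarrow_H\otimes T^{m-1}(\eps_H)$ (Proposition~\ref{formulae} and the projection formula, Lemma~\ref{pr}). Tensoring the summand above and applying the projection formula for $N\subseteq H$ shows $T^{m-1}(\chi\downarrow_H)\supseteq\chi\downarrow_H\otimes\eps_N\uparrow^H_N=(\chi\downarrow_N)\uparrow^H_N$. By Clifford's theorem the constituents of $\chi\downarrow_N$ form a single $G$-orbit on $\mathrm{Irr}(N)$, and by Proposition~\ref{samepart} the irreducibles of $H$ lying over that orbit constitute exactly one $\approx$-class, the component containing $\mathcal{V}_u$. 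Hence every $W_i$ in that component is a constituent of $T^{m-1}(\chi\downarrow_H)$, so $d(W_i,\mathcal{V}_u)\le m-1$ by Proposition~\ref{stimesm}; thus $m(V_u)\le m-1$ for every $A$-simple and Theorem~\ref{evendepth} gives depth $\le 2m$. The degenerate case $m=1$ is $N=H$, i.e.\ $H$ normal, covered by Theorem~\ref{th-Rief}.

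For the sharper bound depth $\le 2m-1$ under $N\subseteq\mathcal{Z}(G)$ I must improve this to pairwise distances $\le m-1$. Centrality forces the $G$-orbits on $\mathrm{Irr}(N)$ to be singletons, so each component sits over a single linear character $\lambda\in\hat N$ and equals $C_\lambda=\mathrm{Irr}(H\mid\lambda)$. Fix $\beta\in C_\lambda$; then $T^{m-1}(\beta)=T(\beta)\otimes T^{m-2}(\eps_H)$ (Proposition~\ref{formulae}), and $T^{m-2}(\eps_H)$ contains $\eps_L\uparrow^H_L$ for $L=H\cap{}^{g_2}H\cap\cdots\cap{}^{g_{m-1}}H$, the $H$-stabilizer of $(g_2H,\ldots,g_{m-1}H)$. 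By the projection formula $T^{m-1}(\beta)\supseteq(T(\beta)\downarrow_L)\uparrow^H_L$. The key point is that in the Mackey decomposition of $T(\beta)\downarrow_L=\beta\uparrow^G_H\downarrow_L$ the double coset of $g_m$ contributes $\mathrm{Ind}_N^L(\beta^{g_m}\downarrow_N)$, since $L\cap{}^{g_m}H=N$; as $N$ is central, $\beta^{g_m}\downarrow_N=\beta(1)\lambda$, so this term is $\beta(1)\,\mathrm{Ind}_N^L\lambda$, whose constituents are all of $\mathrm{Irr}(L\mid\lambda)$. Inducing back to $H$, $(T(\beta)\downarrow_L)\uparrow^H_L$ contains a positive multiple of $\mathrm{Ind}_N^H\lambda$, hence every member of $C_\lambda$. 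So each $\alpha\in C_\lambda$ is a constituent of $T^{m-1}(\beta)$, giving $d(\alpha,\beta)\le m-1$ by Lemma~\ref{dist}; Theorem~\ref{odddepth} then yields depth $\le 2m-1$.

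The even bound is routine once the summand $\eps_N\uparrow^H_N\subseteq T^{m-1}(\eps_H)$ is in hand; the main obstacle is the odd bound, namely extracting a full multiple of $\mathrm{Ind}_N^L\lambda$ from a single Mackey term. This is exactly where $N\subseteq\mathcal{Z}(G)$ is indispensable: it makes $\beta^{g_m}\downarrow_N$ a scalar multiple of the $G$-invariant character $\lambda$, so that one double-coset term already covers the entire component $C_\lambda$ and saves the factor separating $2m-1$ from $2m$. I would verify the small cases $m=1,2$ and the choice of double-coset representatives separately, but expect no further difficulty there.
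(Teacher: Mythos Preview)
Your argument is correct and reaches the same conclusions as the paper, but the route is organized differently. The paper works entirely with iterated Mackey decomposition applied to $T^{m-1}(\alpha)$ for an arbitrary $\alpha\in\mathrm{Irr}(H)$: one shows step by step that a summand of the form $\mathrm{Ind}_N^H({}^z\mathrm{Res}_N^H(\alpha))$ appears, then for the even bound induces once more to $G$ and invokes Corollary~\ref{sameeqcls} to see that a full $u^G_H$-class is hit; for the odd bound under $N\subseteq\mathcal{Z}(G)$ one simply observes that ${}^z\beta=\beta$, so the very same summand already covers a full $d^G_H$-class. The two parts are thus a single computation with a one-line specialization at the end.

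You instead lean on the Section~5 machinery: the permutation-module interpretation of $T^{k}(\eps_H)$ via stabilizers of tuples in $(G/H)^{k}$, the identity $T^{m-1}(\chi\downarrow_H)=\chi\downarrow_H\cdot T^{m-1}(\eps_H)$ (valid precisely because $\chi\downarrow_H$ is restricted from $G$, via Lemma~\ref{pr}), and the factorization $T^{m-1}(\beta)=T(\beta)\cdot T^{m-2}(\eps_H)$ from Proposition~\ref{formulae}. For the even bound your argument is essentially dual to the paper's---you start from $\chi\in\mathrm{Irr}(G)$ rather than $\alpha\in\mathrm{Irr}(H)$---and is equally clean. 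For the odd bound your detour through $L=H\cap{}^{g_2}H\cap\cdots\cap{}^{g_{m-1}}H$ and a single Mackey step on $\beta\uparrow^G\downarrow_L$ is correct but more elaborate than necessary: the paper's observation that its first-part summand $\mathrm{Ind}_N^H({}^z\beta)$ becomes $\mathrm{Ind}_N^H(\beta)$ when $N$ is central would have saved you the intermediate subgroup $L$ entirely. One small point of attribution: the claim that the irreducibles of $H$ lying over a given $G$-orbit on $\mathrm{Irr}(N)$ form a single $d^G_H$-class needs Corollary~\ref{sameeqcls} together with Proposition~\ref{samepart}, not Proposition~\ref{samepart} alone.
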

\begin{proof}
Let $\alpha \in \mathrm{Irr}(H)$, and let $x \in G$. Then Mackey decomposition shows
that $\mathrm{Ind}_{H \cap xHx^{-1}}^H(\mathrm{Res}_{H \cap xHx^{-1}}^{xHx^{-1}} (
{^x\alpha}))$ is a summand of $T(\alpha) = \mathrm{Res}_H^G(\mathrm{Ind}_H^G(\alpha))$.
Thus $\mathrm{Res}_H^G(\mathrm{Ind}_{H \cap xHx^{-1}}^G(\mathrm{Res}_{H \cap xHx^{-1}}^{
xHx^{-1}}({^x\alpha})))$ is a summand of
$$T^2(\alpha) = \mathrm{Res}_H^G(\mathrm{Ind}_H^G(T(\alpha))).$$
Let $y \in G$. Then, by Mackey decomposition again,
$$\mathrm{Ind}_{H \cap yHy^{-1} \cap yxHx^{-1}y^{-1}}^H(\mathrm{Res}_{H \cap yHy^{-1}
\cap yxHx^{-1}y^{-1}}^{yxHx^{-1}y^{-1}}({^{yx}\alpha}))$$
is a summand of $T^2(\alpha)$. Continuing in this fashion, we see that, for $x_1 := 1,
x_2, \ldots, x_m \in G$,
$$\mathrm{Ind}_{x_1Hx_1^{-1} \cap \ldots \cap x_mHx_m^{-1}}^H(\mathrm{Res}_{
x_1Hx_1^{-1} \cap \ldots \cap x_mHx_m^{-1}}^{x_mHx_m^{-1}}({^{x_m}\alpha}))$$
is a summand of $T^{m-1}(\alpha)$. We can choose $x_1 = 1, x_2, \ldots, x_m =:z$ in
such a way that $x_1Hx_1^{-1} \cap \ldots \cap x_mHx_m^{-1} = N$. Then $T^{m-1}(\alpha)$
has a summand of the form
$$\mathrm{Ind}_N^H(\mathrm{Res}_N^{zHz^{-1}}({^z\alpha}))
= \mathrm{Ind}_N^H({^z}\mathrm{Res}_N^H(\alpha)).$$
Let $\beta$ be an irreducible constituent of $\mathrm{Res}_N^H(\alpha)$. Then
$\mathrm{Ind}_N^G({^z\beta}) = \mathrm{Ind}_N^G(\beta)$ is a summand of $\mathrm{Ind}_H^G
(T^{m-1}(\alpha))$. But the irreducible constituents of $\mathrm{Ind}_N^G(\beta)$ form a
complete equivalence class of $\mathrm{Irr}(G)$ under $u_H^G$, by
Corollary~\ref{sameeqcls}. Thus $\alpha$ has graphical distance at most $2m-1$ to
any $\chi \in \mathrm{Irr}(G)$. So $\alpha$ has graphical distance at most $2m-2$
to any set of irreducible constituents of $\mathrm{Res}_H^G(\chi)$, for any $\chi
\in \mathrm{Irr}(G)$, and the first part is proved.

Now suppose that $N \subseteq \mathcal{Z}(G)$. Then, in the notation above,
$\mathrm{Ind}_N^H({^z\beta}) = \mathrm{Ind}_N^H(\beta)$ is a summand of $T^{m-1}(
\alpha)$. But now the irreducible constituents of $\mathrm{Ind}_N^H(\beta)$ form a
complete equivalence class of $\mathrm{Irr}(H)$ under $d_H^G$. This shows that any
two irreducible characters of $H$ have graphical distance at most $2m-2$, so that
$H$ has depth at most $2m-1$ in $G$.
\end{proof}

We illustrate the theorem with three examples.

\begin{example}\normalfont
(1) Let $G = S_4$ and $H = D_8$, so that $N = V_4$ is the intersection of $m=2$
conjugates of $H$. By the theorem, $D_8$ has depth $\leq 4$ in $S_4$; indeed the depth is four by
our earlier computations. In the appendix, the depth of $D_{2n}$ in $S_n$ is
shown to be three for $n > 5$.

(2) Let $G=S_{n+1}$ and $H= S_n$ for some $n$. Then $N=1$, which is the intersection of
$m=n$ conjugates of $H$:
$$\{1\} = S_n \cap S_n^{(1 \, n+1)} \cap \cdots \cap S_n^{(n-1 \, n+1)}.$$
 By the theorem, $S_n$ has depth at most $2n-1$ in $S_{n+1}$.
We will see later that $2n-1$ is precisely the depth
of $S_n < S_{n+1}$.

(3) Let $G=A_6$ and $H=A_5$, so that $N=1$ again. A computation with character
tables shows that $A_5$ has depth $5$ in $A_6$. However, in this case, $N$ is
\textit{not} the intersection of $3$ conjugates of $H$, so the bound in the
theorem is not sharp here. The depth of the inclusion of alternating groups
$A_n \subseteq A_{n+1}$ will be computed in the appendix.
\end{example}

We obtain a corollary by recalling
that $G$ acts on the set of subgroups of $G$ by conjugation.
Let $N_G(H)$ be the normalizer of $H$ in $G$, which is the stabilizer  subgroup of $H$ under conjugation. The proof is a simple application
of the orbit counting theorem:

\begin{cor}
The depth of a subgroup $H$ of a finite group $G$ is bounded above by $2[G:N_G(H)]$.
\end{cor}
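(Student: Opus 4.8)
The plan is to reduce the corollary to Theorem~\ref{intersection} by bounding the number of distinct conjugates of $H$ whose intersection recovers the core $N := \mathrm{core}_G(H)$. Recall from the preceding discussion that $N = \bigcap_{x \in G} {^x}H$, where ${^x}H = xHx^{-1}$. First I would observe that the conjugate subgroup ${^x}H$ depends only on the coset $xN_G(H)$: indeed ${^x}H = {^y}H$ precisely when $y^{-1}x \in N_G(H)$, i.e.\ when $xN_G(H) = yN_G(H)$. Consequently, as $x$ ranges over a transversal of $N_G(H)$ in $G$, the subgroups ${^x}H$ run exactly over the distinct conjugates of $H$, and by the orbit--stabilizer theorem there are precisely $[G:N_G(H)]$ of them, since the orbit of $H$ under the conjugation action of $G$ on its subgroups has stabilizer $N_G(H)$.

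Next I would note that intersecting over all of $G$ is the same as intersecting over such a transversal, as the repeated conjugates contribute nothing new to the intersection. Therefore $N$ is realized as the intersection of exactly $m := [G:N_G(H)]$ conjugates of $H$. Applying Theorem~\ref{intersection} with this value of $m$ then yields that $H$ has depth $\leq 2m = 2[G:N_G(H)]$ in $G$, which is the claimed bound.

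There is essentially no serious obstacle here: all the substance lies in Theorem~\ref{intersection}, and the corollary is a formal consequence of the orbit--stabilizer count of distinct conjugates of $H$. The only point requiring a modicum of care is that the integer $m$ appearing in Theorem~\ref{intersection} may be taken to be the \emph{total} number of distinct conjugates rather than the minimal number of conjugates needed to cut out the core. Since enlarging $m$ only weakens the depth bound $\leq 2m$, using $m = [G:N_G(H)]$ is harmless and delivers precisely the stated inequality; one could obtain a sharper bound by working with a minimal such intersection, but that refinement is not needed for this corollary.
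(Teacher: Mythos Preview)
Your proof is correct and follows essentially the same approach as the paper: the paper remarks that $N_G(H)$ is the stabilizer of $H$ under the conjugation action and that the corollary is ``a simple application of the orbit counting theorem,'' which is exactly your orbit--stabilizer count of the $[G:N_G(H)]$ distinct conjugates followed by an appeal to Theorem~\ref{intersection}.
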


Since $N_G(H)$ contains each subgroup $K$ in which $H$ is normal, it follows that  a subnormal subgroup $H$ of subnormal depth
in $G$ (or defect) $r$ (cf. \cite{I}) has depth
less than or equal to $2m^{r-1}$, where $m$ is the maximal index of two consecutive subgroups in a subnormal series.

The following are examples of depth three or more subgroups from the literature on group theory.
\begin{example}\normalfont
Brodkey's theorem (cf. Theorem 1.37 in \cite[Isaacs]{I})
states that if a finite group $G$ has an abelian Sylow $p$-subgroup $H$, then the largest normal $p$-subgroup $\mathcal{O}_p(G) = N$ of $G$
is the intersection of two conjugates of $H$. In other terms then, $H$ is a depth
four or less subgroup in $G$; depth three or less if $N = \{ 1_G \}$.
\end{example}

\begin{example}\normalfont
If $G$ is $p$-solvable, where $p$ is odd and not a Mersenne prime, then the
largest normal $p$-subgroup $N$ of $G$ is an intersection of two Sylow
$p$-subgroups. If $p$ is even or a Mersenne prime, then $N$ is an intersection
of three Sylow $p$-subgroups \cite[Brewster-Hauck]{BH}. This in our terms
implies that the Sylow $p$-subgroup has depth $\leq$ $4$ or $6$,
respectively. If $N = 1$ then these numbers can be improved to $3$ and $5$,
respectively.
\end{example}

\begin{example}\normalfont
The theorem above implies that a subgroup $H$ of a finite group $G$ has at most depth
three if $H \cap xHx^{-1} = 1$ for some $x \in G$.
For example, a Sylow $p$-subgroup of $\mathrm{GL}(n,q)$
has depth three, as well as certain Borel and Weyl subgroups (for specific values of $n$ and $q = p^r$, left as an exercise to the interested reader)  \cite{AB}.
\end{example}

The results of this paper are suited for creating a program using GAP
to calculate the depth of  subgroups of suitably small groups.
We thank Susanne Danz for implementing such a program at the University of Jena.

In this paper we have found subgroups of  depth at  each odd
number (the symmetric group series), at depth four (the dihedral group in
$S_4$ with some additional examples) and a search with this program yields a subgroup of depth $6$
(the $108$-element normalizer subgroup of the Sylow $3$-subgroup of
the $432$-element affine group $AGL(3,2)$). We found
no subgroups
of depth an even number greater than $6$.

\begin{remark}\normalfont
Suppose $K < H < G$ is a tower of finite groups, where the subgroup $H < G$ is corefree and $m$ conjugates of $H$ have trivial intersection. Then the depth of the subgroup $K < G$ is bounded above by $2m-1$.
This follows from the same theorem since $K$ satisfies the same core
hypothesis.  For example, by the results of one of the examples above,
any subgroup $K$
of $S_n$ has depth less than or equal to $2n-1$ in $S_{n+1}$.
\end{remark}

\subsection{Computations for the operator $T$}
Suppose that $H$ is a subgroup of a finite group $G$. We denote by $\mathrm{Cl}(G)$ the
set of conjugacy classes of $G$ and by $\mathrm{CF}(G)$ the ring of complex class
functions on $G$. For a union $X$ of conjugacy classes of $G$, we denote by $\gamma_{G,X}$
the characteristic function of $X$ in $\mathrm{CF}(G)$. Then
$$\gamma_{G,X} \downarrow_H = \gamma_{H,H \cap X}.$$
Similarly, if $C$ and $D$ denote the conjugacy classes in $G$ and $H$, respectively, of
an element in $H$ then an easy computation shows that
$$\gamma_{H,D} \uparrow^G = \frac{|G|}{|H|} \cdot \frac{|D|}{|C|} \gamma_{G,C}.$$
This implies that the eigenvectors of the linear map
$$T: \mathrm{CF}(H) \longrightarrow \mathrm{CF}(H), \quad \chi \longmapsto
\chi \uparrow^G \downarrow_H,$$
corresponding to nonzero eigenvalues are precisely the class functions $\gamma_{H,C \cap
H}$ ($C \in \mathrm{Cl}(G)$, $C \cap H \ne \emptyset$). Moreover, the eigenvalue of $T$
corresponding to an eigenvector $\gamma_{C, C \cap H}$ is clearly
$$\frac{|G|}{|H|} \cdot \frac{|C \cap H|}{|C|}.$$
We denote by
$$t := |\{\frac{|G|}{|H|} \cdot \frac{|C \cap H|}{|C|}: C \in \mathrm{Cl}(G), \;
C \cap H \ne \emptyset \}|$$
the number of distinct nonzero eigenvalues of $T$. Then the minimum polynomial of $T$
has degree $t$ or $t+1$. Since $\mathcal{S}$
is the matrix of $T$ with respect to the basis $
\mathrm{Irr}(H)$ of $\mathrm{CF}(H)$, we get an equation
$$ \mathcal{S}^{t+1} + a_1\mathcal{S}^t + \cdots + a_t\mathcal{S} + a_{t+1} = 0$$
where $a_1, \ldots, a_{t+1} \in \C$. Thus $A(\mathcal{S}^{t+1}) \subseteq
A(\mathcal{S}^t)$, so that $H$ has
depth $\leq 2t+1$ in $G$.

We also note that all eigenvalues of $T$ are nonzero if and only if $T$ is surjective.
This is equivalent to the condition that two elements in $H$ are conjugate in $G$ if
and only if they are already conjugate in $H$. In this case, the minimum polynomial of
$T$ has degree $t$. So, arguing as above, we conclude that $H$ has depth $\leq 2t-1$ in $G$.
We summarize:

\begin{theorem}
(i) The nonzero eigenvalues of $\mathcal{S}$ are the numbers
$$\frac{|G|}{|H|} \cdot \frac{|C \cap H|}{|C|} \quad \quad\quad (C \in \mathrm{Cl}
(G), \; C \cap H \ne \emptyset).$$

(ii) The subgroup $H$ of $G$ has depth $\leq 2t+1$ in $G$ where $t$ denotes the number of
distinct nonzero eigenvalues of $\mathcal{S}$.

(iii) All eigenvalues of $\mathcal{S}$ are nonzero if and only if any two elements in
$H$ which are conjugate in $G$ are already conjugate in $H$. In this case, $H$ has
depth $\leq 2t-1$ in $G$.
\end{theorem}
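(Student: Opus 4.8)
The plan is to treat the three parts as successive refinements of one computation of the operator $T$ on $\mathrm{CF}(H)$, using throughout that $\mathcal{S} = MM^t$ is a real symmetric matrix representing $T$, hence is diagonalizable with real non-negative eigenvalues, and that $\mathcal{S}$ and $T$ share the same spectrum.

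For part (i) I would exhibit an explicit eigenbasis for the nonzero spectrum. Given $C \in \mathrm{Cl}(G)$ with $C \cap H \neq \emptyset$, the set $C \cap H$ is $H$-invariant, hence a disjoint union of $H$-classes $D_1,\ldots,D_k$, each contained in $C$. Applying the induction formula $\gamma_{H,D_j}\uparrow^G = \frac{|G|}{|H|}\cdot\frac{|D_j|}{|C|}\,\gamma_{G,C}$ and summing gives $\gamma_{H,C\cap H}\uparrow^G = \frac{|G|}{|H|}\cdot\frac{|C\cap H|}{|C|}\,\gamma_{G,C}$, and then the restriction formula $\gamma_{G,C}\downarrow_H = \gamma_{H,C\cap H}$ yields $T(\gamma_{H,C\cap H}) = \frac{|G|}{|H|}\cdot\frac{|C\cap H|}{|C|}\,\gamma_{H,C\cap H}$. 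So every listed number is a nonzero eigenvalue. The same two formulas show $T(\gamma_{H,D}) = \frac{|G|}{|H|}\cdot\frac{|D|}{|C|}\,\gamma_{H,C\cap H}$ for each $H$-class $D \subseteq C$, so $\mathrm{Im}\,T$ is exactly the span of the (disjointly supported, hence linearly independent) vectors $\gamma_{H,C\cap H}$; since $T$ restricts to a bijection of its image, $\mathrm{CF}(H) = \ker T \oplus \mathrm{Im}\,T$ and every nonzero eigenvalue must occur among the listed numbers.

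For part (ii) I would pass from the spectrum to a support inclusion. As $\mathcal{S}$ is symmetric it is diagonalizable, so its minimal polynomial is square-free; its roots are the $t$ distinct nonzero eigenvalues together with possibly $0$, giving degree $t$ or $t+1$. In either case one obtains a relation $\mathcal{S}^{t+1} = -\sum_{k=1}^{t} a_k\,\mathcal{S}^{t+1-k}$ with \emph{vanishing} constant term (either $0$ is a root, or one multiplies the degree-$t$ minimal polynomial by $\mathcal{S}$). The key structural input is Remark~\ref{supersets}, which gives the ascending chain $A(\mathcal{S}) \subseteq A(\mathcal{S}^2) \subseteq \cdots$, so any entry nonzero in some $\mathcal{S}^l$ with $1 \le l \le t$ is already nonzero in $\mathcal{S}^t$. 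Since each entry of $\mathcal{S}^{t+1}$ is a linear combination of the corresponding entries of $\mathcal{S},\ldots,\mathcal{S}^t$, a nonzero entry of $\mathcal{S}^{t+1}$ forces a nonzero entry in some $\mathcal{S}^l$ ($l \le t$) and hence in $\mathcal{S}^t$; thus $A(\mathcal{S}^{t+1}) \subseteq A(\mathcal{S}^t)$, i.e.\ $\mathcal{S}^{t+1} \le q\mathcal{S}^t$ by Remark~\ref{nonzeros}, which is precisely a depth $2t+1$ inequality.

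For part (iii) I would first translate invertibility of $\mathcal{S}$ into the conjugacy statement by a dimension count: $\mathcal{S}$ has no zero eigenvalue iff $T$ is surjective iff $\dim \mathrm{Im}\,T = |\mathrm{Cl}(H)|$; but by part (i) $\dim \mathrm{Im}\,T$ is the number of $G$-classes meeting $H$, and the natural surjection sending an $H$-class to the $G$-class containing it is a bijection exactly when distinct $H$-classes lie in distinct $G$-classes, that is, when $G$-conjugacy and $H$-conjugacy coincide on $H$. Assuming this, the minimal polynomial has degree $t$ with nonzero constant term, so $\mathcal{S}^t = -\sum_{k=1}^{t} b_k\,\mathcal{S}^{t-k}$ genuinely involves $\mathcal{S}^0 = I$; here I use that $A(I) \subseteq A(\mathcal{S})$ because the diagonal of $\mathcal{S}$ is positive, so the same ascending-chain argument gives $A(\mathcal{S}^t) \subseteq A(\mathcal{S}^{t-1})$ and hence depth $\le 2t-1$. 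I expect the main obstacle to lie not in the eigenvalue computation but in these last support arguments: one must guarantee that cancellation in the minimal-polynomial relation cannot kill a nonzero entry, which is exactly what the monotonicity $A(\mathcal{S}^m) \subseteq A(\mathcal{S}^{m+1})$ (together with $A(I) \subseteq A(\mathcal{S})$ in part (iii)) is designed to prevent, while correctly handling the constant term in each degree case.
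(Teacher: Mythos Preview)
Your proposal is correct and follows essentially the same approach as the paper: both compute the eigenvectors of $T$ on $\mathrm{CF}(H)$ via the induction/restriction formulas for characteristic functions of conjugacy classes, read off the minimal polynomial degree as $t$ or $t+1$, and deduce the support inclusion $A(\mathcal{S}^{t+1})\subseteq A(\mathcal{S}^t)$ (respectively $A(\mathcal{S}^t)\subseteq A(\mathcal{S}^{t-1})$) from the ascending chain $A(\mathcal{S}^m)\subseteq A(\mathcal{S}^{m+1})$. You spell out more explicitly than the paper why the listed numbers exhaust the nonzero spectrum (via $\mathrm{Im}\,T = \mathrm{span}\{\gamma_{H,C\cap H}\}$) and why the support argument is insensitive to cancellation, but the underlying ideas are identical.
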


\begin{example}\normalfont
For the inclusion of the alternating groups $A_4 < A_5$ may be
checked that the minimum polynomial of $\mathcal{S}$ is
$X(X-1)(X-2)(X-5)$. By the theorem above, $A_4$ has depth $\leq 7$
in $A_5$. Computing the powers of $M$, one sees that the subgroup
$A_4 < A_5$ has depth five. The depth of the inclusion $A_n
\subseteq A_{n+1}$ for arbitrary $n$ will be computed in the
appendix.
\end{example}

\begin{example}\normalfont
Consider the inclusion $S_3 < S_4$ of permutation groups.  It can be
computed that the minimal polynomial $\mathcal{S}$ in this case is
given by $m(X) = X^3 - 7X^2 +14 X -8 $ $ = (X-4)(X-2)(X-1)$. The
nonzero eigenvalues of $\mathcal{S}$ are $1,2,4$. By the theorem
above, the depth of $S_3 < S_4$ is at most five, which is the
precise depth of the extension as we will see in the next
subsection.
\end{example}

\subsection{Depth of inclusions of symmetric groups}
In this subsection we will prove the following:

\begin{theorem}
\label{th-sym}
For any $n\geq 2$ the standard inclusion $S_n \subset S_{n+1}$ has  depth $2n-1$.
\end{theorem}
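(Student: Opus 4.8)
The plan is to combine the upper bound already recorded in Example~(2) following Theorem~\ref{intersection}---namely that $S_n$ has depth at most $2n-1$ in $S_{n+1}$---with a matching lower bound obtained by locating two irreducible characters of $S_n$ that lie far apart in the branching graph. First I would translate the relation $\sim$ on $\mathrm{Irr}(S_n)$ into the combinatorics of partitions via the branching rule: labelling the simple $\C[S_{n+1}]$-modules by partitions $\lambda \vdash n+1$ and the simple $\C[S_n]$-modules by partitions $\mu \vdash n$, one has $V_\lambda\downarrow_{S_n} \cong \bigoplus_{\mu} W_\mu$ where $\mu$ runs over the partitions obtained from $\lambda$ by deleting a single box. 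Hence $W_\mu \sim W_\nu$ exactly when $\mu$ and $\nu$ admit a common cover $\lambda \vdash n+1$, i.e.\ when $\nu$ is obtained from $\mu$ by moving one box, and $d(W_\mu,W_\nu)$ is the graph distance in this ``move one box'' graph on partitions of $n$.

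The crux is the distance estimate $d(W_{(1^n)}, W_{(n)}) \geq n-1$ between the sign and trivial characters. For this I would use the number of parts $\ell(\lambda)$ as a monovariant. If $W_\mu \sim W_\nu$ through a common cover $\lambda$, then $\mu$ and $\nu$ are each $\lambda$ with one corner box removed, so $\ell(\lambda) - \ell(\mu),\ \ell(\lambda)-\ell(\nu) \in \{0,1\}$ and therefore $|\ell(\mu)-\ell(\nu)| \leq 1$; that is, a single $\sim$-step changes the number of parts by at most one. Since $(n)$ has one part and $(1^n)$ has $n$ parts, every $\sim$-path joining $W_{(n)}$ to $W_{(1^n)}$ has length at least $n-1$, proving $d(W_{(1^n)},W_{(n)}) \geq n-1$. (Combined with the cited depth bound and Theorem~\ref{odddepth}, this distance is in fact exactly $n-1$, but only the lower bound is needed below.)

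Finally I would rule out depth $2n-2$. By Theorem~\ref{evendepth} applied with $m=n-1$, the inclusion matrix satisfies a depth $2n-2$ inequality if and only if $m(V_u) \leq n-2$ for every simple $\C[S_{n+1}]$-module $V_u$. Taking $V_u = V_{(n+1)}$, the trivial module, its restriction to $S_n$ is $W_{(n)}$, so $\mathcal{V}_u = \{W_{(n)}\}$ and $m(V_u) = \max_{\mu \vdash n} d(W_\mu, W_{(n)}) \geq d(W_{(1^n)}, W_{(n)}) \geq n-1$. This violates the depth $2n-2$ condition, so the depth is $>2n-2$, i.e.\ $\geq 2n-1$; together with the upper bound $2n-1$ this forces the depth to equal $2n-1$. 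The only case outside the hypothesis $m \geq 2$ of Theorem~\ref{evendepth} is $n=2$, where depth $2n-2 = 2$ is excluded directly since $S_2$ is not normal in $S_3$ (Theorem~\ref{th-Rief}). The main obstacle is the distance lower bound of the middle paragraph, and the decisive idea there is the elementary observation that the number of parts is a Lipschitz monovariant for the branching adjacency.
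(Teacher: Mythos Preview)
Your proof is correct and follows the same architecture as the paper's: the upper bound comes from Theorem~\ref{intersection} (Example~(2)), and the lower bound comes from Theorem~\ref{evendepth} applied to a simple $\C[S_{n+1}]$-module whose restriction is a single $\C[S_n]$-simple sitting at distance $n-1$ from its conjugate partner.

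There are two small but worthwhile differences. First, you pick the trivial module $V_{(n+1)}$ where the paper picks the sign module $V_{(1^{n+1})}$; these are interchangeable by the $\lambda\leftrightarrow\lambda'$ symmetry. Second, and more substantively, you actually \emph{prove} the distance inequality $d(W_{(n)},W_{(1^n)})\geq n-1$ via the $1$-Lipschitz monovariant $\ell(\cdot)$, whereas the paper's main text only exhibits a path of length $n-1$ and asserts the equality (``it is easy to see''); a rigorous justification appears only in the appendix as the full distance formula of Proposition~\ref{prop:depthS_n}. Your monovariant argument is a clean, self-contained substitute for that appendix computation. Your separate treatment of $n=2$ (where Theorem~\ref{evendepth} with $m=n-1=1$ is outside its stated range) is also a nicety the main text omits.
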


In order to prove the theorem, we recall that the irreducible characters of $S_n$
are in bijection with partitions of $n$. Moreover, partitions of $n$ can be
visualized by their Young diagrams. For example, the trivial character of $S_n$
corresponds to the trivial partition $(n)$ of $n$, and the Young diagram of $(n)$
is a row of $n$ boxes. Similarly, the sign character of $S_n$ corresponds to the
partition $(1^n)= (1,\ldots,1)$,
and the Young diagram of $(1^n)$ is a column of $n$ boxes.

By the branching rules, restricting an irreducible character of $S_{n+1}$ to $S_n$
means removing a box from the corresponding Young diagram, and inducing an
irreducible character of $S_n$ to $S_{n+1}$ means adding a box to the corresponding
Young diagram.

By Theorem~\ref{intersection} above, the inclusion $S_n \subseteq S_{n+1}$ has
depth $\leq 2n-1$. It is easy to give an alternative proof of this, based on the
combinatorics of Young diagrams. These ideas are explained in more detail in the
appendix where they are also used to determine the  depth of the inclusion
of alternating groups $A_n \subseteq A_{n+1}$.

It only remains to show that the inclusion matrix of $S_n \subseteq S_{n+1}$ does not satisfy a depth
$2n-2$ inequality. For this we may argue as follows:

The sign character of $S_{n+1}$, denoted by $V_u$, restricts to the sign character
$\sigma$ of $S_n$. Thus, in
the notation of Section~3, the set $\mathcal{V}_u$ consists of $\sigma$
alone. It is easy to see that $d(\eps,\sigma) = n-1$:
\medskip
\begin{center}
\begin{minipage}[c]{20mm}
\input{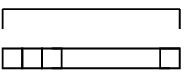_t}\end{minipage}$\quad \longmapsto \quad$\begin{minipage}[c]{20mm}\input{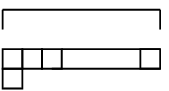_t}\end{minipage}
$\quad\longmapsto\quad $\begin{minipage}[c]{20mm}\input{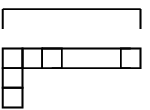_t}\end{minipage}
\end{center}
\begin{center}
$\longmapsto\;\cdots\;\longmapsto \quad$ \begin{minipage}[c]{20mm}\input{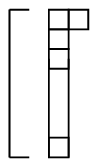_t}\end{minipage}$\;\longmapsto\quad$\begin{minipage}[c]{20mm}\input{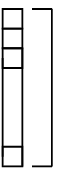_t}\end{minipage}
\end{center}
\bigskip
It follows that $m(V_u) = n-1$. Thus Theorem~\ref{evendepth} shows that the inclusion matrix of $S_n$ in $S_{n+1}$ cannot
satisfy a depth $2n-2$ inequality.

This result also applies to the semisimple Hecke algebras:
$H(q,n)$ is depth $2n - 1$ in $H(q,n+1)$, since they share
the same representation theory with the permutation groups
$S_n < S_{n+1}$ (see \cite{GHJ}).

%%%%%%%%%%%%%%%%%%%%%%%%%%%%%%%%%%%%%%%%%%%%%%%%%%%%%%%%%%%%%%%%%%%%%%%%%%%%%%%
\bigskip\bigskip

\begin{appendix}
\begin{center}
{\bf \Large Appendix: Depth of subgroups -- some examples}\\

\medskip
\textit{Susanne Danz\footnote{Current address: Mathematical Institute,
University of Oxford, St. Giles' 24-29, OX1 3LB, Oxford, UK, {\tt danz@maths.ox.ac.uk}\\
Supported by DFG Grant No. DA 1115-1/1}
 and Burkhard K\"ulshammer}\\
\smallskip
\textit{Mathematisches Institut, Friedrich-Schiller-Universit\"at,}\\
\textit{07737 Jena, Germany}
\end{center}

\section{Inclusions of symmetric and alternating groups}
Throughout this section, let $n\geq 1$, let $\mathfrak{S}_n$ denote
the symmetric group of degree $n$, and let $\mathfrak{A}_n$ denote the
alternating group of degree $n$. Moreover, let $\mathcal{P}_n$ be the
set of all partitions of $n$. By Theorem~\ref{th-sym}, we
know that for $n\geq 2$
the ring extension $\mathbb{C}\mathfrak{S}_n\subset \mathbb{C}\mathfrak{S}_{n+1}$
is of depth $2n-1$.
We now aim to determine the depth of the
ring extension $\mathbb{C}\mathfrak{A}_n\subset \mathbb{C}\mathfrak{A}_{n+1}$.
Moreover, we will give a combinatorial proof of Theorem~\ref{th-sym}.
Before stating the results, we fix some further notation.

\begin{remark}\label{rem:alt}
\normalfont
(a) For $\lambda\in\mathcal{P}_n$, we denote
the conjugate partition by $\lambda'$. That is, the Young diagram of $\lambda'$ is
obtained by transposing the Young diagram of $\lambda$.
For $\lambda \in\mathcal{P}_n$, let $\chi^{\lambda}$ be the corresponding ordinary irreducible
$\mathfrak{S}_n$-character.
If $\lambda= \lambda'$ then $\chi^{\lambda}\downarrow_{\mathfrak{A}_n}=\chi^{\lambda}_++\chi^{\lambda}_-$,
for irreducible $\mathfrak{A}_n$-characters $\chi^{\lambda}_+\neq \chi^{\lambda}_-$. We
choose our labelling in accordance with \cite{JK}, Sec. 2.5.
With this convention, for $\alpha'=\alpha\in\mathcal{P}_{n+1}$ and
$\lambda'=\lambda\in\mathcal{P}_n$ such that
$\langle\chi^{\alpha}\downarrow_{\mathfrak{S}_n},\chi^{\lambda}\rangle\neq 0$, we have
$\langle\chi^{\alpha}_+\downarrow_{\mathfrak{A}_n},\chi^{\lambda}_+\rangle\neq 0=
\langle\chi^{\alpha}_+\downarrow_{\mathfrak{A}_n},\chi^{\lambda}_-\rangle$
and $\langle\chi^{\alpha}_-\downarrow_{\mathfrak{A}_n},\chi^{\lambda}_-\rangle\neq 0=
\langle\chi^{\alpha}_-\downarrow_{\mathfrak{A}_n},\chi^{\lambda}_+\rangle$
(see \cite{JK}, Thm. 2.5.13, and \cite{BO}).
If $\lambda\neq \lambda'$ then
$\chi^{\lambda}\downarrow_{\mathfrak{A}_n}=\chi^{\lambda'}\downarrow_{\mathfrak{A}_n}$ is irreducible.
We may then suppose that $\lambda> \lambda'$, and set
$\chi^{\lambda}_0:=\chi^{\lambda}\downarrow_{\mathfrak{A}_n}$.
Here ``$\geq$'' denotes the usual lexicographic ordering on partitions.\\

(b) We consider the bipartite graphs $\Gamma(\mathfrak{S}_n)$ and
$\Gamma(\mathfrak{A}_n)$. Here $\Gamma(\mathfrak{S}_n)$ has vertices
$V:=\mathcal{P}_n\cup\mathcal{P}_{n+1}$ and edges
$$E:=\{(\alpha,\lambda)\in \mathcal{P}_{n+1}\times\mathcal{P}_n\mid
\langle\chi^{\alpha}\downarrow_{\mathfrak{S}_n},\chi^{\lambda}\rangle\neq 0\}.$$
The graph $\Gamma(\mathfrak{A}_n)$ has vertices $\widetilde{V}:=V(n)\cup V(n+1)$
and edges $\widetilde{E}$ where
\begin{align*}
V(n)&:=\{[\lambda,0]\mid \lambda\in\mathcal{P}_n,\, \lambda>\lambda'\}\cup\{[\lambda,+],[\lambda,-]\mid \lambda=\lambda'\in\mathcal{P}_n\},\\
V(n+1)&:=\{[\alpha,0]\mid \alpha\in\mathcal{P}_{n+1},\, \alpha>\alpha'\}\cup\{[\alpha,+],[\alpha,-]\mid \alpha=\alpha'\in\mathcal{P}_{n+1}\},\\
\widetilde{E}&:=\{([\alpha,x],[\lambda,y])\in V(n+1)\times V(n)\mid \langle\chi^{\alpha}_x\downarrow_{\mathfrak{A}_n},\chi^{\lambda}_y\rangle\neq 0\}.
\end{align*}

(c) Let $\lambda,\mu\in\mathcal{P}_n$ with corresponding Young diagrams
$[\lambda]$ and $[\mu]$, respectively. We set
$$d(\lambda,\mu):=|[\lambda]\setminus [\mu]|+|[\mu]\setminus [\lambda]|=2(n-|[\lambda]\cap[\mu]|).$$
With this notation, we have:
\end{remark}

\begin{prop}\label{prop:depthS_n}
Let $n\geq 2$, and let $\lambda,\, \mu\in\mathcal{P}_n$.
Then $d(\lambda,\mu)$ is the length of a shortest path from $\lambda$ to
$\mu$ in $\Gamma(\mathfrak{S}_n)$. In particular, the ring extension
$\mathbb{C}\mathfrak{S}_n\subset\mathbb{C}\mathfrak{S}_{n+1}$
has depth $2n-1$.
\end{prop}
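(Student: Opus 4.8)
The plan is to prove the two inequalities ``shortest path $\geq d(\lambda,\mu)$'' and ``shortest path $\leq d(\lambda,\mu)$'' separately, and then read off the depth from Theorem~\ref{odddepth}. Recall from Remark~\ref{rem:alt}(b) that a path in $\Gamma(\mathfrak{S}_n)$ joining $\lambda,\mu\in\mathcal{P}_n$ alternates between $\mathcal{P}_n$ and $\mathcal{P}_{n+1}$, each edge adding or removing a single box by the branching rule; since both endpoints lie in $\mathcal{P}_n$, any such path has even length $2k$ and decomposes into ``up--down'' steps $\nu_{i-1}\to\alpha_i\to\nu_i$ with $\nu_i\in\mathcal{P}_n$, $\alpha_i\in\mathcal{P}_{n+1}$. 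For the lower bound I would track the potential $|[\nu_i]\cap[\mu]|$. Each up--down step adds one box $a_i$ and removes one box $b_i$; if $a_i=b_i$ the potential is unchanged, and otherwise $[\nu_i]=([\nu_{i-1}]\cup\{a_i\})\setminus\{b_i\}$ can gain at most the single box $a_i$ and loses $b_i$ from the intersection with $[\mu]$, so $|[\nu_i]\cap[\mu]|\leq |[\nu_{i-1}]\cap[\mu]|+1$. Summing along the path gives $n=|[\mu]\cap[\mu]|\leq |[\lambda]\cap[\mu]|+k$, hence $2k\geq 2(n-|[\lambda]\cap[\mu]|)=d(\lambda,\mu)$.

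For the upper bound I would construct a path of length exactly $d(\lambda,\mu)$ by induction on $d(\lambda,\mu)$, the base case $\lambda=\mu$ being trivial. The key combinatorial lemma is: if $\lambda\neq\mu$, then there is a single up--down step producing some $\lambda'\in\mathcal{P}_n$ with $|[\lambda']\cap[\mu]|=|[\lambda]\cap[\mu]|+1$. To build it, order cells componentwise and choose a cell $a$ of $[\mu]\setminus[\lambda]$ that is minimal and a cell $b$ of $[\lambda]\setminus[\mu]$ that is maximal; both sets are nonempty and of equal cardinality since $\lambda\neq\mu$ and $|\lambda|=|\mu|$. Minimality of $a$ forces its upper and left neighbours into $[\lambda]$ (each such neighbour lies in $[\mu]$ but cannot lie in $[\mu]\setminus[\lambda]$), so $a$ is an addable corner of $\lambda$ lying in $[\mu]$; dually, maximality of $b$ forces its right and lower neighbours out of $[\lambda]$, so $b$ is a removable corner of $\lambda$ lying outside $[\mu]$. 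Setting $[\lambda']:=([\lambda]\cup\{a\})\setminus\{b\}$, the walk $\lambda\to[\lambda]\cup\{a\}\to\lambda'$ lies in $\Gamma(\mathfrak{S}_n)$, increases the intersection with $[\mu]$ by one, and so $d(\lambda',\mu)=d(\lambda,\mu)-2$; prepending this length-$2$ segment to the path supplied by the inductive hypothesis yields a path of length $d(\lambda,\mu)$.

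The step I expect to be the main obstacle is the compatibility check inside this lemma, namely that $b$ is still a \emph{removable} corner of the intermediate shape $[\lambda]\cup\{a\}$ (not merely of $[\lambda]$), so that the second edge is legal. This is exactly where one uses that $a\in[\mu]$ while $b\notin[\mu]$: if $a$ were the right or the lower neighbour of $b$, then $b$ would lie below or to the left of a cell of $[\mu]$ and hence itself lie in $[\mu]$, a contradiction; thus $a$ is not adjacent to $b$ on the relevant sides and adding $a$ does not disturb the removability of $b$. Once this is settled, the induction runs without further difficulty.

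Finally I would deduce the depth. By the first part the graphical distance between the $\mathfrak{S}_n$-simples $\lambda$ and $\mu$ equals $2(n-|[\lambda]\cap[\mu]|)$, which is largest exactly when the intersection is smallest; since the cell $(1,1)$ lies in every Young diagram of size $n$, that intersection is at least $1$, with equality for the trivial partition $(n)$ and the sign partition $(1^n)$. Hence the maximum graphical distance is $2(n-1)$, i.e.\ the maximum value of the Section~3 distance is $n-1$. Theorem~\ref{odddepth} then shows that $\mathbb{C}\mathfrak{S}_n\subset\mathbb{C}\mathfrak{S}_{n+1}$ satisfies a depth $2(n-1)+1=2n-1$ inequality and no smaller odd one; combined with the even lower bound already established in the main text via $m(V_u)=n-1$ and Theorem~\ref{evendepth}, which rules out a depth $2n-2$ inequality, the depth is exactly $2n-1$.
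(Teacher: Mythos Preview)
Your proof is correct, and the upper-bound construction and the deduction of the depth are essentially the paper's argument: both pick an addable corner of $\lambda$ inside $[\mu]\setminus[\lambda]$ and a removable corner inside $[\lambda]\setminus[\mu]$, verify the compatibility of the two moves, and then invoke Theorems~\ref{odddepth} and~\ref{evendepth}. Your phrasing via componentwise-minimal and componentwise-maximal cells is a tidy way to guarantee addability/removability at once; the paper instead argues ad hoc that such corners exist and then checks the same compatibility (that the added box is not adjacent to the removed one) using the same observation $a\in[\mu]$, $b\notin[\mu]$.

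Where you genuinely diverge is the lower bound. The paper proves ``shortest path $\geq d(\lambda,\mu)$'' by induction on the length of a shortest path, using the triangle inequality for the symmetric-difference metric $d$ together with the already-established upper bound. Your potential argument, tracking $|[\nu_i]\cap[\mu]|$ along the walk and noting it rises by at most one per up--down step, is a more direct route that does not need the upper bound first or any appeal to the triangle inequality; it gives the inequality $2k\geq d(\lambda,\mu)$ for \emph{every} path, not just a shortest one. This buys a cleaner logical structure and would generalise immediately to any branching graph where a single up--down step changes the underlying set by at most two cells. One small completeness remark: for the final clause you defer the exclusion of depth $2n-2$ to the main text; note that Theorem~\ref{evendepth} is stated for $m\geq 2$, so the case $n=2$ (i.e.\ excluding depth $2$) still needs the separate observation $\mathfrak{S}_2\not\trianglelefteq\mathfrak{S}_3$, which the appendix makes explicit.
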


\begin{proof}
Let $\lambda,\,\mu\in\mathcal{P}_n$, and set $2m:=d(\lambda,\mu)$.
We argue with induction on $m$, in order to show that in $\Gamma(\mathfrak{S}_n)$ there
is a path of length $2m$ from $\lambda$ to $\mu$. For $m=0$
this is trivially true, so we may now suppose that $m\geq 1$.
We construct a partition $\lambda^1$ of $n$ such that $d(\lambda,\lambda^1)=2$,
$d(\lambda^1,\mu)=2m-2$, and such that there is a path of length $2$ from $\lambda$ to
$\lambda^1$. Since $\lambda\neq \mu$, we have
$[\lambda]\not\subseteq [\mu]\not\subseteq [\lambda]$. Thus there is some $i\in\mathbb{N}$
such that
$(i,\lambda_i)\in [\lambda]\setminus [\mu]$ and $(i+1,\lambda_i)\notin [\lambda]$.
That is, $(i,\lambda_i)$ is a removable node of $[\lambda]$. Analogously, there are
some $r,s\in\mathbb{N}$ such that $(r,s)\in [\mu]\setminus [\lambda]$.
We may suppose further that $(t,s)\in [\lambda]$, for $1\leq t\leq r-1$, and $(r,u)\in [\lambda]$,
for $1\leq u\leq s-1$. So $(r,s)$ is an addable node of $[\lambda]$.
We define $\alpha\in\mathcal{P}_{n+1}$ with Young diagram $[\alpha]:=[\lambda]\cup\{(r,s)\}$.
Assume that $(i,\lambda_i)$ is not a removable node of $[\alpha]$. This can happen only if
$(r,s)=(i+1,\lambda_i)$ or $(r,s)=(i,\lambda_i+1)$. But, since $(r,s)\in [\mu]$, this
implies also $(i,\lambda_i)\in [\mu]$, a contradiction. Therefore,
$[\lambda^1]:=[\alpha]\setminus \{(i,\lambda_i)\}$ is the Young diagram
of a partition $\lambda^1\in\mathcal{P}_n$ with $d(\lambda,\lambda^1)=2$,
$d(\lambda^1,\mu)=2m-2$, and
$\langle\chi^{\alpha}\downarrow_{\mathfrak{S}_n},\chi^{\lambda}\rangle\neq 0\neq
\langle\chi^{\alpha}\downarrow_{\mathfrak{S}_n},\chi^{\lambda^1}\rangle$.
So there is a path of length 2 from $\lambda$ to $\lambda^1$ in $\Gamma(\mathfrak{S}_n)$.
By induction, there is a path of length $2m-2$ from $\lambda^1$ to
$\mu$ in $\Gamma(\mathfrak{S}_n)$. So we obtain a path
$$\begin{xy}\xymatrix{\alpha^1\ar@{-}[d] \ar@{-}[rd]& \alpha^2\ar@{-}[d] \ar@{-}[rd]&&\cdots & \alpha^m\ar@{-}[d] \ar@{-}[rd]\\\lambda=\lambda^0 & \lambda^1 & \lambda^2 &\cdots &\lambda^{m-1} &\lambda^m=\mu}
\end{xy}$$
of length $2m$ from $\lambda$ to $\mu$ in $\Gamma(\mathfrak{S}_n)$. Conversely, let
$$\begin{xy}\xymatrix{\beta^1\ar@{-}[d] \ar@{-}[rd]& \beta^2\ar@{-}[d] \ar@{-}[rd]&&\cdots & \beta^r\ar@{-}[d] \ar@{-}[rd]\\
\lambda=\mu^0 & \mu^1 & \mu^2 &\cdots &\mu^{r-1} &\mu^r=\mu}
\end{xy}$$
be a shortest path in $\Gamma(\mathfrak{S}_n)$. That is, $2r\leq d(\lambda,\mu)$.
We argue with induction on $r$ to show
$d(\lambda,\mu)=2r$. If $r=0$ then $\lambda=\mu$, and $d(\lambda,\mu)=0=2r$. Next suppose
that $r=1$. Then $\lambda\neq \mu$, and $[\mu]$ is obtained by first adding a node
$(i,j)$ to $[\lambda]$, and then removing a node $(r,s)\neq (i,j)$ from
$[\lambda]\cup\{(i,j)\}=[\beta^1]$. That is $d(\lambda,\mu)=2$. Now we may suppose that
$r\geq 2$. By induction, $d(\lambda,\mu^{r-1})=2(r-1)$ and $d(\mu^{r-1},\mu)=2$. So
$d(\lambda,\mu)\leq 2r$, and thus $2r=d(\lambda,\mu)$. This proves the first part
of the statement. \\

Since, for any $\lambda,\mu\in\mathcal{P}_n$, we have $(1,1)\in [\lambda]\cap [\mu]$,
it follows that $d(\lambda,\mu)\leq 2(n-1)$. Moreover, for $\lambda:=(n)$ and
$\mu:=(1^n)$, we have $d(\lambda,\mu)=2(n-1)$, so that, by what we have just shown
and  Theorem~\ref{odddepth}, the extension
$\mathbb{C}\mathfrak{S}_n\subset\mathbb{C}\mathfrak{S}_{n+1}$ has depth $2(n-1)+1$.
On the other hand, $\chi^{(n)}=\chi^{(n+1)}\downarrow_{\mathfrak{S}_n}$. Therefore, for $n>2$,
Theorem~\ref{evendepth} implies that the inclusion matrix of
$\mathbb{C}\mathfrak{S}_n\subset\mathbb{C}\mathfrak{S}_{n+1}$ cannot satisfy a depth $2(n-1)$ inequality.
If $n=2$ then $\mathbb{C}\mathfrak{S}_n\subset\mathbb{C}\mathfrak{S}_{n+1}$
cannot have depth $2(n-1)=2$, since $\mathfrak{S}_2\not\trianglelefteq\mathfrak{S}_3$.
This completes the proof of the proposition.
\end{proof}

\begin{lemma}\label{lemma:order}
Let $\lambda,\mu\in\mathcal{P}_n$.
Suppose that $\mu'=\mu\neq \lambda=\lambda'$. Then
$d(\lambda,\mu)\geq 4$.
\end{lemma}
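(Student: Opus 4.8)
The plan is to reduce everything to ruling out the single value $d(\lambda,\mu)=2$. By the formula in Remark~\ref{rem:alt}(c) we have $d(\lambda,\mu)=2(n-|[\lambda]\cap[\mu]|)$, which is always even, and since $\lambda\neq\mu$ it is at least $2$; hence it suffices to show that $d(\lambda,\mu)=2$ is impossible. So I would assume $d(\lambda,\mu)=2$ and derive a contradiction. In that case the symmetric difference $([\lambda]\setminus[\mu])\cup([\mu]\setminus[\lambda])$ consists of exactly two boxes, say $p\in[\lambda]\setminus[\mu]$ and $q\in[\mu]\setminus[\lambda]$, so that $[\mu]$ is obtained from $[\lambda]$ by deleting $p$ and inserting $q$.

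The structural feature I would exploit is self-conjugacy. Writing $\tau$ for the transposition $(i,j)\mapsto(j,i)$ acting on boxes, both $[\lambda]$ and $[\mu]$ are $\tau$-invariant, and therefore so is their symmetric difference $\{p,q\}$. Thus $\tau$ restricts to a permutation of the two-element set $\{p,q\}$, which leaves exactly two cases: either $\tau$ swaps $p$ and $q$, or $\tau$ fixes each of them. In the swap case $q=\tau(p)$, so writing $p=(a,b)$ gives $q=(b,a)$; but $p\in[\lambda]$ together with the $\tau$-invariance of $[\lambda]$ forces $(b,a)\in[\lambda]$, contradicting $q\in[\mu]\setminus[\lambda]$. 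This branch therefore closes in one line.

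The remaining case is that $p=(a,a)$ and $q=(c,c)$ both lie on the main diagonal, with $a\neq c$ (equality would give $\mu=\lambda$). Here I would use the elementary fact that a diagonal box forces a full square: since the rows of $\lambda$ are weakly decreasing, $(a,a)\in[\lambda]$ implies $\lambda_i\geq a$ for all $i\leq a$, so the entire $a\times a$ block lies in $[\lambda]$; consequently $(c,c)\notin[\lambda]$ forces $c>a$. On the other hand $(c,c)\in[\mu]$ means $\mu_c\geq c$, so the full $c\times c$ block lies in $[\mu]$, and since $a<c$ this block contains $(a,a)$, contradicting $(a,a)=p\notin[\mu]$. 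With both cases impossible, $d(\lambda,\mu)=2$ cannot occur, and hence $d(\lambda,\mu)\geq 4$. The only delicate point I anticipate is justifying cleanly that the symmetric difference of two self-conjugate diagrams is $\tau$-invariant and hence that $\tau$ permutes $\{p,q\}$; once that dichotomy is in place, each branch collapses to a short contradiction coming purely from the shape constraints of Young diagrams.
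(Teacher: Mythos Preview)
Your proof is correct and rests on the same idea as the paper's: self-conjugacy makes the symmetric difference $([\lambda]\setminus[\mu])\cup([\mu]\setminus[\lambda])$ invariant under transposition, and two boxes cannot survive this constraint. The paper argues forward by picking end-of-row boxes $(i,\lambda_i)\in[\lambda]\setminus[\mu]$ and $(j,\mu_j)\in[\mu]\setminus[\lambda]$, adjoins their transposes, and asserts that at least three of the four are distinct (hence $d\geq 3$, so $d\geq 4$ by parity); you instead assume $d=2$ and derive a contradiction. Your Case~2, the ``diagonal box forces a full square'' argument, is exactly what justifies the paper's terse claim that $i\neq\lambda_i$ or $j\neq\mu_j$, so your write-up is in fact a bit more explicit at that step. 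One small sharpening: since each of $[\lambda]\setminus[\mu]$ and $[\mu]\setminus[\lambda]$ is individually $\tau$-invariant (not just their union), the swap case is ruled out immediately --- $\tau$ must fix the unique element of each --- so you could skip Case~1 altogether.
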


\begin{proof}
Let $\mu'=\mu\neq \lambda=\lambda'$. Then there are some
$i,j\in\{1,\ldots,n\}$ such that
$(i,\lambda_i)\notin [\mu]$ and $(j,\mu_j)\notin [\lambda]$.
Since both $\lambda$ and $\mu$ are symmetric, also
$(\lambda_i,i)\in [\lambda]\setminus [\mu]$ and
$(\mu_j,j)\in [\mu]\setminus [\lambda]$. In particular, $i\neq \lambda_i$ or $j\neq \mu_j$,
and also $i\neq j$. Hence $d(\lambda,\mu)\geq 3$, and
so $d(\lambda,\mu)\geq 4$, since $d(\lambda,\mu)$ is even.
\end{proof}

\begin{example}\normalfont
Consider, for instance, the symmetric partitions $\lambda=(4,3,2,1)$ and
$\mu=(5,2,1^3)$ of $10$. Then we have $d(\lambda,\mu)=2(10-4-2-1-1)=4$.
\end{example}

\begin{prop}\label{prop:depthA_n}
Let $n\geq 3$. Then the ring extension
$\mathbb{C}\mathfrak{A}_n\subset\mathbb{C}\mathfrak{A}_{n+1}$ has depth
$2(n-\lceil\sqrt{n}\rceil)+1$.
\end{prop}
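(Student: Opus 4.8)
The plan is to reduce the determination of the depth to a diameter computation in the bipartite graph $\Gamma(\mathfrak{A}_n)$ and then to carry out that computation with Young diagrams. Write $k:=\lceil\sqrt{n}\rceil$. Since the asserted depth $2(n-k)+1$ is odd, Theorem~\ref{odddepth} reduces the upper bound to showing that \emph{every} pair of irreducible $\mathfrak{A}_n$-characters has graphical distance $\leq 2(n-k)$ in $\Gamma(\mathfrak{A}_n)$; the strictness (that the depth is not the even number $2(n-k)$) will then be extracted separately from Theorem~\ref{evendepth}. Thus there are two tasks: (a) a uniform upper bound on distances, and (b) a witness forcing odd depth. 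Throughout I exploit that the conjugation involution $\lambda\mapsto\lambda'$ on $\mathcal{P}_n$ identifies the two partitions of a non-self-conjugate pair into the single vertex $\chi^{\lambda}_0$, while a self-conjugate $\lambda$ splits into $\chi^{\lambda}_+,\chi^{\lambda}_-$; so $\Gamma(\mathfrak{A}_n)$ is essentially the quotient of $\Gamma(\mathfrak{S}_n)$ by this involution, with the fixed vertices doubled.

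The heart of (a) is a \emph{distance formula}: I claim the graphical distance in $\Gamma(\mathfrak{A}_n)$ between the characters labelled by $\lambda$ and $\mu$ equals $\min(d(\lambda,\mu),d(\lambda,\mu'))=2\bigl(n-\max(|[\lambda]\cap[\mu]|,\,|[\lambda]\cap[\mu']|)\bigr)$, independently of the $\pm$-signs. The inequality ``$\geq$'' is the easy direction: projecting any path in $\Gamma(\mathfrak{A}_n)$ to the conjugation quotient of $\Gamma(\mathfrak{S}_n)$ produces a walk of the same length from $\{\lambda,\lambda'\}$ to $\{\mu,\mu'\}$, so by Proposition~\ref{prop:depthS_n} the length is at least $\min(d(\lambda,\mu),d(\lambda,\mu'))$. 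The reverse inequality is where I expect the \textbf{main obstacle} to lie: one must lift a shortest $\mathfrak{S}_n$-geodesic to $\Gamma(\mathfrak{A}_n)$ while matching the prescribed $\pm$-signs at both ends, without lengthening the path. Here the branching data of Remark~\ref{rem:alt}(a) is the key tool: a sign-matching (case-4) edge occurs only between two self-conjugate partitions, whereas every self-conjugate partition has an addable \emph{off-diagonal} box, so it has a non-self-conjugate neighbour $\chi^{\alpha}_0$ joined to \emph{both} signs $\chi^{\mu}_\pm$. This lets me reroute the last two steps of a geodesic through a non-self-conjugate vertex and pick up either sign for free; Lemma~\ref{lemma:order}, guaranteeing that distinct self-conjugate partitions are already at distance $\geq 4$, provides the room needed to handle the exceptional configurations in which a geodesic would otherwise be trapped among self-conjugate vertices.

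Granting the distance formula, the diameter equals $2\bigl(n-\min_{\lambda,\mu}\max(|[\lambda]\cap[\mu]|,|[\lambda]\cap[\mu']|)\bigr)$, so (a) comes down to the combinatorial inequality $\max(|[\lambda]\cap[\mu]|,|[\lambda]\cap[\mu']|)\geq k$ for all $\lambda,\mu\in\mathcal{P}_n$. I would prove this through the stronger product estimate $|[\lambda]\cap[\mu]|\cdot|[\lambda]\cap[\mu']|\geq n$ (an elementary Young-diagram count; note that by columns $|[\lambda]\cap[\mu']|=\sum_i\min(\lambda'_i,\mu_i)$, so both factors are sums of minima against $\mu$), whence $\max(\cdots)^2\geq n$ and, being an integer, $\max(\cdots)\geq\lceil\sqrt{n}\rceil=k$. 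Sharpness is exhibited by $\lambda=(n)$ together with a self-conjugate $\mu$ whose first row has length exactly $k$ (such a $\mu$ exists for $n\geq 3$, e.g. obtained from the $k\times k$ square by symmetric deletions): then $|[\lambda]\cap[\mu]|=|[\lambda]\cap[\mu']|=\mu_1=k$, giving a pair at distance $2(n-k)$ and completing the diameter computation.

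Finally, for (b) I apply Theorem~\ref{evendepth} to the $\mathfrak{A}_{n+1}$-character $V_u$ equal to the trivial character: its restriction has the single constituent $\mathcal{V}_u=\{\mathrm{triv}_{\mathfrak{A}_n}\}=\{\chi^{(n)}_0\}$, so $m(V_u)$ is just the eccentricity of the trivial $\mathfrak{A}_n$-character. By the $\lambda=(n)$ case of the optimization above this eccentricity is $n-\min_\mu\max(\mu_1,\ell(\mu))=n-k$ (using $\mu_1\,\ell(\mu)\geq n$, attained at the squarish self-conjugate $\mu$). Hence $m(V_u)=n-k$, so the depth-$2(n-k)$ inequality fails and the depth is at least $2(n-k)+1$; together with the diameter bound and Theorem~\ref{odddepth} this pins it to exactly $2(n-k)+1$. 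The very small case $n=3$ (where $n-k=1$ is below the range of Theorem~\ref{evendepth}) is disposed of directly, since $\mathfrak{A}_3$ is not normal in $\mathfrak{A}_4$ and so the depth there is $3=2(n-k)+1$ as well.
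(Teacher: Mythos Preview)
Your overall architecture matches the paper's: reduce to a diameter computation via Theorems~\ref{odddepth} and~\ref{evendepth}, lift $\Gamma(\mathfrak{S}_n)$-geodesics to $\Gamma(\mathfrak{A}_n)$, and witness the lower bound with the trivial character and a nearly square partition. Two points need attention.

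\textbf{The lifting step is the real content, and you have only sketched it.} Your ``distance formula'' $\min(d(\lambda,\mu),d(\lambda,\mu'))$ is exactly what the paper establishes, but the hard direction---that a shortest $\mathfrak{S}_n$-path can be lifted to $\Gamma(\mathfrak{A}_n)$ hitting \emph{prescribed} signs at both endpoints without getting longer---is where all the work lies. Your idea (reroute through a non-self-conjugate neighbour to toggle the sign) is the right one, but a single rerouting at the end is not enough: the sign constraint propagates along the path, and you must show consistency step by step. The paper does this by walking the geodesic $\lambda=\lambda^0,\alpha^1,\lambda^1,\ldots,\lambda^m=\mu$ and assigning signs $x_i,z_i$ inductively via a four-case analysis on whether $\tilde\lambda^{i-1}$ and $\tilde\alpha^i$ are self-conjugate. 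Lemma~\ref{lemma:order} is used precisely in Case~4 (both self-conjugate) to guarantee that $\tilde\lambda^i$ is then \emph{not} self-conjugate, so the sign can be passed through. Your proposal names Lemma~\ref{lemma:order} but does not explain how it resolves this propagation; that is the gap. (A small related slip: your formula gives $0$ for the distance between $\chi^{\lambda}_{+}$ and $\chi^{\lambda}_{-}$ when $\lambda=\lambda'$, whereas it is $2$; the paper treats this case separately.)

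\textbf{The product inequality is an unnecessary detour.} You want $\max(|[\lambda]\cap[\mu]|,|[\lambda]\cap[\mu']|)\geq k$ and propose to get it from $|[\lambda]\cap[\mu]|\cdot|[\lambda]\cap[\mu']|\geq n$, calling this ``an elementary Young-diagram count''. For general $\lambda,\mu$ that product bound is not obviously elementary (your parenthetical only handles $\lambda=(n)$), and you do not prove it. The paper bypasses it entirely: since the vertices of $\Gamma(\mathfrak{A}_n)$ are labelled by representatives with $\lambda\geq\lambda'$ and $\mu\geq\mu'$, one has $\lambda_1\geq\lceil\sqrt{n}\rceil$ and $\mu_1\geq\lceil\sqrt{n}\rceil$, hence $|[\lambda]\cap[\mu]|\geq\min(\lambda_1,\mu_1)\geq k$ directly. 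This normalization gives the diameter bound in one line; you should use it instead.

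Your lower-bound step (b) only needs the easy direction of the distance formula and is fine; it coincides with the paper's argument.
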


\begin{proof}
In consequence of Theorem~\ref{odddepth} and Theorem~\ref{evendepth}, it suffices to
show the following:

\begin{enumerate}
\item[(1)] For any $v,w\in V(n)$, there is a path of
length at most $2(n-\lceil\sqrt{n}\rceil)$ from $v$ to $w$
in $\Gamma(\mathfrak{A}_n)$, and
\item[(2)] there is some $v\in V(n)$ such that in $\Gamma(\mathfrak{A}_n)$
there is no path of length less than $2(n-\lceil\sqrt{n}\rceil)$
from $v$ to $[(n),0]$.
\end{enumerate}
For this, let $v,w\in V(n)$. Suppose first that $\lambda=\lambda'\in\mathcal{P}_n$
and that $v:=[\lambda,+]$ and $w:=[\lambda,-]$. Let further $\alpha\in\mathcal{P}_{n+1}$
with Young diagram $[\alpha]:=[\lambda]\cup\{(1,\lambda_1+1)\}$. Then $\alpha>\alpha'$,
and $\langle\chi^{\alpha}_0\downarrow_{\mathfrak{A}_n},\chi^{\lambda}_+\rangle=1=
\langle\chi^{\alpha}_0\downarrow_{\mathfrak{A}_n},\chi^{\lambda}_-\rangle$,
by \cite{BO}. Hence in $\Gamma(\mathfrak{A}_n)$ there is a path of length
$2\leq 2(n-\lceil\sqrt{n}\rceil)$ from $v$ to $w$.\\

Therefore, from now on, we may suppose that $v=[\lambda,x]$ and
$w=[\mu,y]$, for some $\lambda\geq \lambda'$ and $\mu\geq\mu'$ with
$\lambda\neq \mu$, and appropriate $x,y\in\{0,+,-\}$. We set $2m:=d(\lambda,\mu)$, and
show that there is a path from $v$ to $w$ in $\Gamma(\mathfrak{A}_n)$ of length $2m$.
Note that, since $\lambda\geq \lambda'$ and $\mu\geq \mu'$, we
must have $\lambda_1\geq \lceil\sqrt{n}\rceil$ and also
$\mu_1\geq \lceil\sqrt{n}\rceil$. So $2m\leq 2(n-\lceil\sqrt{n}\rceil)$, and
we then get (1).\\

First of all, by Proposition \ref{prop:depthS_n}, there is a path
$$\begin{xy}\xymatrix{\alpha^1\ar@{-}[d] \ar@{-}[rd]& \alpha^2\ar@{-}[d] \ar@{-}[rd]&&\cdots & \alpha^m\ar@{-}[d] \ar@{-}[rd]\\\lambda=\lambda^0 & \lambda^1 & \lambda ^2 &\cdots &\lambda^{m-1} &\lambda^m=\mu}
\end{xy}$$
of length $2m$ in $\Gamma(\mathfrak{S}_n)$. Here $\lambda^0,\ldots,\lambda^m\in\mathcal{P}_n$,
and $\alpha^1,\ldots,\alpha^m\in\mathcal{P}_{n+1}$. We now construct a path
$$\begin{xy}\xymatrix{[\tilde{\alpha}^1,z_1]\ar@{-}[d] \ar@{-}[rd]&&\cdots & [\tilde{\alpha}^m,z_m]\ar@{-}[d] \ar@{-}[rd]\\ \mathop{[\lambda,x]}\limits_{\displaystyle =[\tilde{\lambda}^0,x_0]} & [\tilde{\lambda}^1,x_1] &\cdots &[\tilde{\lambda}^{m-1},x_{m-1}] &\mathop{[\tilde{\lambda}^m,x_m]}\limits_{\displaystyle =[\mu,y]}}
\end{xy}$$
in $\Gamma(\mathfrak{A}_n)$ as follows. For $i=1,\ldots,m$, we set
$\tilde{\lambda}^i:=\max\{\lambda^i,(\lambda^i)'\}$ and
$\tilde{\alpha}^i:=\max\{\alpha^i,(\alpha^i)'\}$
where the maxima are taken with respect to the lexicographic ordering on partitions.
We then determine the ``signs'' $x_0,\ldots, x_m, z_1,\ldots, z_m$ inductively.
Of course, $x_0=x$. So we may suppose that $i\geq 1$ and that we have already determined
$x_0,\ldots, x_{i-1}$ and $z_1,\ldots, z_{i-1}$. In order to fix $z_i$ and $x_i$, we
distinguish four cases.\\

\textit{Case 1.} $\tilde{\lambda}^{i-1}\neq (\tilde{\lambda}^{i-1})'$ and
$\tilde{\alpha}^i\neq (\tilde{\alpha}^i)'$. In this case we set $z_i:=0$ and
$$x_i:=\begin{cases}
+, &\text{ if } \tilde{\lambda}^i = (\tilde{\lambda}^i)'\text{ and } y\in\{0,+\},\\
-, &\text{ if } \tilde{\lambda}^i = (\tilde{\lambda}^i)'\text{ and } y=-,\\
0, &\text{ if } \tilde{\lambda}^i \neq (\tilde{\lambda}^i)'.
\end{cases}$$

\textit{Case 2.} $\tilde{\lambda}^{i-1}\neq (\tilde{\lambda}^{i-1})'$ and
$\tilde{\alpha}^i= (\tilde{\alpha}^i)'$. If $\tilde{\lambda}^i\neq(\tilde{\lambda}^i)'$ then
we set $z_i:=+$ and $x_i:=0$, otherwise
$$z_i:=x_i:=\begin{cases}
+, &\text{ if } y\in\{0,+\},\\
-, &\text{ if } y=-.
\end{cases}$$

\textit{Case 3.} $\tilde{\lambda}^{i-1}= (\tilde{\lambda}^{i-1})'$ and
$\tilde{\alpha}^i\neq (\tilde{\alpha}^i)'$. We then set $z_i:=0$ and
$$x_i:=\begin{cases}
+, &\text{ if } \tilde{\lambda}^i = (\tilde{\lambda}^i)'\text{ and } y\in\{0,+\},\\
-, &\text{ if } \tilde{\lambda}^i = (\tilde{\lambda}^i)'\text{ and } y=-,\\
0, &\text{ if } \tilde{\lambda}^i \neq (\tilde{\lambda}^i)'.
\end{cases}$$

\textit{Case 4.} $\tilde{\lambda}^{i-1}= (\tilde{\lambda}^{i-1})'$ and
$\tilde{\alpha}^i=(\tilde{\alpha}^i)'$. Then, in particular,
$\tilde{\lambda}^i\neq (\tilde{\lambda}^i)'$,
by Lemma \ref{lemma:order}. Thus we may set $z_i:=x_{i-1}$ and $x_i:=0$.\\

Note that this construction ensures that $x_m=y$. Moreover, by \cite{BO}, for
$i=1,\ldots, m$, we get a path of length $2$ from $[\tilde{\lambda}^{i-1},x_{i-1}]$
to $[\tilde{\lambda}^i,x_i]$ in $\Gamma(\mathfrak{A}_n)$, hence a path of length $2m$
from $[\lambda,x]$ to $[\mu,y]$. Note further that these considerations also show the
following: in the case where $\mu=\mu'$ there are both a path of length $2m$ from
$[\lambda,x]$ to $[\mu,+]$ and a path of length $2m$ from $[\lambda,x]$
to $[\mu,-]$.\\

In order to prove (2), let conversely $\lambda,\mu\in V(n)$ such that
$2\leq d(\lambda,\mu)=:2m$, let $x,y\in\{0,+,-\}$ be appropriate signs, and
let
$$\begin{xy}\xymatrix{[\beta^1,z_1]\ar@{-}[d] \ar@{-}[rd]&&\cdots & [\beta^r,z_r]\ar@{-}[d] \ar@{-}[rd]\\
\mathop{[\lambda,x]}\limits_{\displaystyle =[\lambda^0,x_0]} & [\lambda^1,x_1] &\cdots &[\lambda^{r-1},x_{r-1}] &\mathop{[\lambda^r,x_r]}\limits_{\displaystyle =[\mu,y]}}
\end{xy}$$
be a shortest path from $[\lambda,x]$ to $[\mu,y]$ in $\Gamma(\mathfrak{A}_n)$.
Then $r\leq m$, by what we have shown above. We also observe that the partitions
$\lambda^0,\lambda^1,\ldots,\lambda^r$ must be pairwise different. To see this,
assume that $\lambda^i=\lambda^j$, for some $0\leq i<j\leq r$. Then we may suppose that
$x_i=+$ and $x_j=-$. As we have seen above, there is a path of length 2 from $[\lambda^i,x_i]$ to
$[\lambda^j,x_j]$ so that $j=i+1$, since the given path is as short as possible.
If $j<r$ then $\lambda^{i+2}\neq\lambda^i=\lambda^{i+1}$, by the minimality of
$r$. But, since there is a path of length $2$ from $[\lambda^{i+1},x_{i+1}]=[\lambda^i,x_{i+1}]$ to
$[\lambda^{i+2},x_{i+2}]$, there is also a path of length 2 from $[\lambda^i,x_i]$
to $[\lambda^{i+2},x_{i+2}]$, as we have proved above. But this contradicts the minimality
of $r$. If $i+1=r$ then $i>0$ and $\lambda^{i-1}\neq \lambda^i=\lambda^{i+1}$.
This implies that there is a path of length 2 from $[\lambda^{i-1},x_{i-1}]$ to
$[\lambda^{i+1},x_{i+1}]$, which is again a contradiction to the minimality of $r$.\\

We now set $\tilde{\lambda}^0:=\lambda^0=\lambda$.
Since $\langle\chi^{\beta^1}_{z_1}\downarrow_{\mathfrak{A}_n},\chi^{\lambda}_x\rangle\neq 0
\neq \langle\chi^{\beta^1}_{z_1}\downarrow_{\mathfrak{A}_n},\chi^{\lambda^1}_{x_1}\rangle$, there is some
$\tilde{\beta}^1\in\{\beta^1,(\beta^1)'\}$ such that
$\langle\chi^{\tilde{\beta}^1}\downarrow_{\mathfrak{S}_n},\chi^{\lambda}\rangle\neq 0$.
Having fixed $\tilde{\beta}^1$, we can find
$\tilde{\lambda}^1\in\{\lambda^1,(\lambda^1)'\}$
such that also $\langle\chi^{\tilde{\beta}^1}\downarrow_{\mathfrak{S}_n},\chi^{\tilde{\lambda}^1}\rangle\neq 0$.
Let now $i\geq 2$. We may argue inductively, and suppose that we have a path
$$\begin{xy}\xymatrix{\tilde{\beta}^1\ar@{-}[d] \ar@{-}[rd]& \tilde{\beta}^2\ar@{-}[d] \ar@{-}[rd]&&\cdots & \tilde{\beta}^{i-1}\ar@{-}[d] \ar@{-}[rd]\\
\lambda=\tilde{\lambda}^0 & \tilde{\lambda}^1 & \tilde{\lambda}^2 &\cdots &\tilde{\lambda}^{i-2} &\tilde{\lambda}^{i-1}}
\end{xy}$$
in $\Gamma(\mathfrak{S}_n)$, for appropriate $\tilde{\beta}^j\in\{\beta^j,(\beta^j)'\}$,
$\tilde{\lambda}^j\in\{\lambda^j,(\lambda^j)'\}$, and all $j=1,\ldots, i-1$.
We then choose $\tilde{\beta}^i\in \{\beta^i,(\beta^i)'\}$ and
$\tilde{\lambda}^i\in\{\lambda^i,(\lambda^i)'\}$ such that
$\langle\chi^{\tilde{\beta}^i}\downarrow_{\mathfrak{S}_n},\chi^{\tilde{\lambda}^{i-1}}\rangle\neq 0
\neq\langle\chi^{\tilde{\beta}^i}\downarrow_{\mathfrak{S}_n},\chi^{\tilde{\lambda}^i}\rangle$.
In this way we obtain in $\Gamma(\mathfrak{S}_n)$ a path of length $2r$ from
$\lambda$ to $\mu$, or from $\lambda$ to $\mu'$.\\

Now let
$\lambda:=(n)$, and let $\mu\in\mathcal{P}_n$ be such that
\begin{align*}
\{1,\ldots,\lfloor\sqrt{n}\rfloor\}\times \{1,\ldots,\lfloor\sqrt{n}\rfloor\}&\subseteq [\mu]\cap [\mu'],\\
[\mu]\cup [\mu']&\subseteq \{1,\ldots,\lceil\sqrt{n}\rceil\}\times \{1,\ldots,\lceil\sqrt{n}\rceil\},
\end{align*}
and such that $\mu\geq \mu'$. Then $2\leq d(\lambda,\mu)=2(n-\lceil\sqrt{n}\rceil)$, and
$2(n-\lceil\sqrt{n}\rceil)\leq d(\lambda,\mu')\leq 2(n-\lceil\sqrt{n}\rceil)+2$.
Assume that in $\Gamma(\mathfrak{A}_n)$ there is a path of length $2r<2(n-\lceil\sqrt{n}\rceil )$
from $[(n),0]$ to $[\mu,y]$, for some
admissible $y\in\{0,+,-\}$. Then, by the above considerations, in $\Gamma(\mathfrak{S}_n)$
there is a path of length $2r$ from $(n)$ to $\mu$, or from $(n)$ to $\mu'$.
But, since $2r<\min\{d((n),\mu),d((n),\mu')\}$, this is impossible, by Proposition
\ref{prop:depthS_n}. Therefore, we have now also shown (2), and the
assertion of the proposition follows.
\end{proof}

\begin{remark}\normalfont
Note that the ring extensions $\mathbb{C}\mathfrak{S}_1\subset\mathbb{C}\mathfrak{S}_2$ and
$\mathbb{C}\mathfrak{A}_2\subset\mathbb{C}\mathfrak{A}_3$ are clearly of depth 2, since
$\mathfrak{S}_1\unlhd \mathfrak{S}_2$ and $\mathfrak{A}_2\unlhd \mathfrak{A}_3$.
\end{remark}

%%%%%%%%%%%%%%%%%%%%%%%%%%%%%%%%%%%%%%%%%%%%%%%%%%%%%%%%%%%%%%%%%%%%%%%%%%%%%%%%%%%

\section{Inclusions of dihedral groups in symmetric groups}

\begin{lemma}\label{lemma:cyc}
Let $n\geq 4$, let $G:=\mathfrak{S}_n$, and let
$H:=\langle a\rangle$ where $a$ is an
$n$-cycle in $G$. Then the ring extension $\mathbb{C}H\subset\mathbb{C}G$ has
depth $3$.
\end{lemma}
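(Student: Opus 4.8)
The plan is to pin the depth between $3$ and $3$. For the lower bound, observe that for $n\geq 4$ the cyclic group $H=\langle a\rangle$ generated by an $n$-cycle is not normal in $G=\mathfrak{S}_n$: it has $(n-1)!/\phi(n)>1$ distinct conjugates. Hence, by Theorem~\ref{th-Rief}, the extension $\mathbb{C}H\subset\mathbb{C}G$ is not of depth $2$ (and a fortiori not of depth $1$), so its depth is at least $3$.

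For the upper bound I would invoke Theorem~\ref{intersection}. It suffices to produce a single $x\in G$ with $H\cap xHx^{-1}=\{1\}$. Indeed, such an $x$ forces $\mathrm{core}_G(H)=\{1\}\subseteq\mathcal{Z}(G)$ and simultaneously exhibits $\{1\}$ as the intersection of the $m=2$ conjugates $H=1H1^{-1}$ and $xHx^{-1}$; Theorem~\ref{intersection} then yields depth $\leq 2m-1=3$. Combined with the lower bound, this gives depth exactly $3$, so the whole problem is reduced to an existence statement about two $n$-cycles.

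The real content, and what I expect to be the main obstacle, is the existence of an $n$-cycle $b:=xax^{-1}$ with $\langle a\rangle\cap\langle b\rangle=\{1\}$. I would first reduce to prime order: the intersection $\langle a\rangle\cap\langle b\rangle$ is a subgroup of the cyclic group $\langle a\rangle$, so it is nontrivial if and only if it contains an element of prime order $p\mid n$; since a cyclic group of order $n$ has a unique subgroup of each order, this happens if and only if $\langle a^{n/p}\rangle=\langle b^{n/p}\rangle$ for some prime $p\mid n$. Thus I must find an $n$-cycle $b$ that, for every prime $p\mid n$, avoids the condition $b^{n/p}\in\langle a^{n/p}\rangle\setminus\{1\}$.

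I would settle this by counting. Writing $d:=n/p$, the power $b^{d}$ of an $n$-cycle $b$ has cycle type $(p)^{d}$, and a short interleaving computation (arranging the $d$ length-$p$ cycles of a fixed $h$ along the $d$ residue tracks modulo $d$ of $b$, then dividing out the $n$ cyclic rotations of the word) shows that for a fixed $h$ of this cycle type the number of $n$-cycles $b$ with $b^{d}=h$ equals $(d-1)!\,p^{\,d-1}$. Since $\langle a^{n/p}\rangle\setminus\{1\}$ has $p-1$ elements, the number of $n$-cycles that are ``bad at $p$'' is $(p-1)(d-1)!\,p^{\,d-1}$, so the number of bad $n$-cycles overall is at most
\[
\sum_{p\mid n}(p-1)\left(\tfrac{n}{p}-1\right)!\;p^{\,n/p-1}.
\]
As there are $(n-1)!$ $n$-cycles in total, it remains to verify that this sum is strictly smaller than $(n-1)!$ for every $n\geq 4$. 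The dominant term is the one for the smallest prime divisor (for even $n$, the $p=2$ term $(n/2-1)!\,2^{\,n/2-1}$), and comparing it with $(n-1)!=(n-1)(n-2)\cdots(n/2)\cdot(n/2-1)!$, a product of $n/2$ factors each at least $n/2$, shows it is exponentially smaller; the remaining terms, being fewer and having smaller factorials, are easily absorbed. This inequality is where the arithmetic must be done carefully, but it holds with ample room, so a suitable $b$ exists and the proof closes.
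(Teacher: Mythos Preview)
Your proof is correct. Both you and the paper reduce to the same key claim---the existence of a conjugate $n$-cycle $b$ with $\langle a\rangle\cap\langle b\rangle=1$---and both invoke the same upper- and lower-bound machinery (non-normality for depth~$>2$, and the trivial-intersection case of Theorem~\ref{intersection} for depth~$\leq 3$). The difference is in how that key claim is established.

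The paper exhibits an explicit conjugator $g=(2,n-2)(n-1,n)$ and checks by hand, via a short case analysis on the possible powers, that $\langle a\rangle\cap\langle a^g\rangle=1$. Your route is instead a counting argument: you reduce to showing that for some $n$-cycle $b$ and every prime $p\mid n$ one has $b^{n/p}\notin\langle a^{n/p}\rangle$, then bound the number of ``bad'' $n$-cycles by $\sum_{p\mid n}(p-1)(n/p-1)!\,p^{n/p-1}$ and compare with $(n-1)!$. Your enumeration formula $(d-1)!\,p^{d-1}$ for the $n$-cycles with prescribed $d$-th power is correct, and the inequality does hold for all $n\geq 4$ (with plenty of room), so the argument closes. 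The advantage of your approach is that it is conceptual and avoids ad~hoc verification; the advantage of the paper's explicit $g$ is that the very same conjugator is reused in the next result (Proposition~\ref{prop:dihedral}) to handle the dihedral subgroup $D_{2n}<\mathfrak{S}_n$, where a counting argument would be less convenient.
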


\begin{proof}
We may suppose that $a=(1,2,\ldots,n)$.
Since $H$ is not normal in $G$, $\mathbb{C}H\subset\mathbb{C}G$ is not of
depth 2. We set $g:=(2,n-2)(n-1,n)$ and
$\tilde{a}:=a^g=(1,n-2,3,4,\ldots, n-3,2,n,n-1)$.
In the case where $n=4$, this means $g=(3,4)$ and thus $\tilde{a}=(1,2,4,3)$.
It suffices to show that $\langle a\rangle\cap\langle\tilde{a}\rangle=1$.
For $n=4$ this is obviously true. Let now $n>4$.
Assume, for a contradiction, that $\langle a\rangle\cap\langle\tilde{a}\rangle\neq 1$.
Then there are some $1\neq l\in\mathbb{N}$, some $k\in \mathbb{N}$, and some
$i\in\{1,\ldots, l-1\}$ such that $n=kl$ and $a^{ki}=\tilde{a}^k\neq 1$.
If $k\in\{2,\ldots,n-4\}$ then $1+ki=a^{ki}(1)=\tilde{a}^k(1)=1+k$. Thus
$i=1$, and if $k<n-4$ then we have the contradiction
$2+k=\tilde{a}^k(n-2)=a^k(n-2)\equiv n-2+k\pmod{n}$.
If $k=n-4$ then $2=\tilde{a}^k(n-2)=a^k(n-2)\equiv n-2+k\pmod{n}$, hence
$k=4$ and $n= 8$. But then $a^4(n)=4\neq 3=\tilde{a}^4(n)$, a
contradiction. If $k=1$ then $1+i=a^i(1)=\tilde{a}(1)=n-2$, thus
$i=n-3$. But $a^{n-3}(2)=n-1\neq n=\tilde{a}(2)$, a contradiction. Finally,
let $k\in\{n-1,n-2,n-3\}$. Since $n=kl\geq 2k\geq 2(n-3)=2n-6$, we get
$n\leq 6$. Thus $n=6$, $k=3$, $l=2$, $i=1$, and we have a contradiction.
\end{proof}

\begin{prop}\label{prop:dihedral}
Let $n>5$, and let $H:=D_{2n}$ be the dihedral subgroup
of $\mathfrak{S}_n$ of order $2n$, with generators
$a:=(1,2,\ldots,n)$ and $b:=(1,n)(2,n-1)(3,n-2)\cdots
(\lfloor \frac{n}{2}\rfloor,\lceil \frac{n+2}{2}\rceil)$.
Then the ring extension $\mathbb{C}H\subset \mathbb{C}\mathfrak{S}_n$
has depth $3$.
\end{prop}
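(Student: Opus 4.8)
The plan is to show $3\le\mathrm{depth}\le 3$. For the lower bound, note that for $n>5$ the only normal subgroups of $\mathfrak{S}_n$ are $1$, $\mathfrak{A}_n$ and $\mathfrak{S}_n$; since $|H|=2n<n!/2=|\mathfrak{A}_n|$ and $H\neq\mathfrak{S}_n$, the core $N:=\mathrm{core}_{\mathfrak{S}_n}(H)$ is trivial and $H$ is not normal in $\mathfrak{S}_n$. By Theorem~\ref{th-Rief} the extension is therefore not of depth $2$, and it is not of depth $1$ either, since a depth one inequality $\mathcal{S}\le nI$ yields $\mathcal{S}M\le nM$ and hence normality. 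Thus $\mathbb{C}H\subset\mathbb{C}\mathfrak{S}_n$ has depth at least $3$.

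For the upper bound I would use the consequence of Theorem~\ref{intersection} recorded in the example after it: if some single $g\in\mathfrak{S}_n$ satisfies $H\cap gHg^{-1}=1$, then the (already trivial) core is realised as the intersection of the $m=2$ conjugates $H$ and $gHg^{-1}$, and since $N=1\subseteq\mathcal{Z}(\mathfrak{S}_n)$, Theorem~\ref{intersection} gives depth $\le 2m-1=3$. Everything thus reduces to producing one conjugate of $H$ meeting $H$ trivially; this is the dihedral analogue of Lemma~\ref{lemma:cyc}, and it carries all the difficulty.

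To check $H\cap gHg^{-1}=1$ I would sort the nontrivial common elements by order. In any dihedral group every non-rotation is an involution, and conjugation preserves this, so a common element of order $>2$ must be a rotation of both groups, i.e.\ it lies in $\langle a\rangle\cap g\langle a\rangle g^{-1}$. Taking $g$ to be the displacement $(2,n-2)(n-1,n)$ of Lemma~\ref{lemma:cyc} (or a mild variant of it), that lemma shows this cyclic intersection is trivial, so no common element of order $>2$ survives.

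The remaining elements, the involutions, form the main obstacle. A common involution $t$ would be a reflection of $H$ (or, for $n$ even, the central rotation $a^{n/2}$) with $g^{-1}tg\in H$; as $H$ and $gHg^{-1}$ each carry about $n$ reflections of identical cycle type, cycle type alone cannot separate them. I would instead track fixed-point sets together with the images of the few small-numbered points under $g$, in the explicit point-chasing style of Lemma~\ref{lemma:cyc}, to show that $g$ conjugates every involution of $H$ to a permutation lying outside $H$. The arguments differ with the parity of $n$: for $n$ odd each reflection fixes a single, distinct vertex, whereas for $n$ even one must also contend with the fixed-point-free involution $a^{n/2}$, which shares its cycle type with the edge reflections of $gHg^{-1}$ and so demands a direct computation of its action on a handful of points. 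The hypothesis $n>5$ is precisely what makes this point-chasing succeed, the small cases (such as $D_8<\mathfrak{S}_4$, which has depth $4$) being genuine exceptions.
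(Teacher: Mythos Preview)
Your overall strategy is exactly the paper's: the lower bound comes from non-normality of $H$ in $\mathfrak{S}_n$, and the upper bound comes from Theorem~\ref{intersection} applied with the very same conjugating element $g=(2,n-2)(n-1,n)$ already used in Lemma~\ref{lemma:cyc}, so that it suffices to show $H\cap H^g=1$. Your reorganisation by element order (rotations of order $>2$ handled by Lemma~\ref{lemma:cyc}, involutions handled separately) is a clean way to frame the case split and is equivalent to the paper's division into the four cases $a^i=\tilde a^j$, $a^i=\tilde a^j\tilde b$, $a^ib=\tilde a^j$, $a^ib=\tilde a^j\tilde b$.

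The gap is that you stop precisely where the work begins. The involution analysis is not a formality: in the paper it occupies essentially the entire proof, and the hardest part is the reflection--reflection case $a^ib=\tilde a^j\tilde b$, which does \emph{not} yield to a single fixed-point or parity argument. One first pins down $i$ in terms of $j$ by evaluating both sides at a carefully chosen point (the paper uses $n$), and then must treat the boundary values $j\in\{0,n-4,n-3,n-2,n-1\}$ individually, each requiring a different test point to reach a contradiction; the generic range $1\le j\le n-5$ also needs its own computation. The mixed cases $a^{n/2}=\tilde a^j\tilde b$ and $a^ib=\tilde a^{n/2}$ (your ``central rotation versus edge reflection'' worry for even $n$) likewise need an explicit calculation, and the paper dispatches one of them by passing to the conjugate equation. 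Your fixed-point heuristic is a reasonable guide for the odd-$n$ reflections, but it does not by itself eliminate the even-$n$ edge reflections (which have no fixed points) or the several exceptional $j$-values, so you would still be forced into the same kind of point-by-point chase the paper carries out. In short: right plan, right $g$, but the proposal is a sketch; the substance of the argument is the explicit elimination of the involutions, and that has to be written out.
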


\begin{proof}
Since $H$ is not normal in $\mathfrak{S}_n$, the extension
$\mathbb{C}H\subset \mathbb{C}\mathfrak{S}_n$ is not of depth 2. Again we set
$g:=(2,n-2)(n-1,n)$, $\tilde{a}:=a^g=(1,n-2,3,\ldots, n-3,2,n,n-1)$, and
$\tilde{b}:=b^g=(1,n-1)(n-2,n)(2,3)(4,n-3)\cdots (\lfloor \frac{n}{2}\rfloor,\lceil \frac{n+2}{2}\rceil)$.
Also here it suffices to show that $H\cap H^g=1$.
For $n=6$ this is obviously true. Thus, for the remainder of the proof, let $n\geq 7$.
Assume, for
a contradiction, that there is some $1\neq x\in H\cap H^g$.
Note that $H=\{a^i,\, a^ib\mid i=0,\ldots,n-1\}$ and
$H^g=\{\tilde{a}^i,\, \tilde{a}^i\tilde{b}\mid i=0,\ldots,n-1\}$. We
distinguish between four cases.\\

\textit{Case 1.} $x=a^i=\tilde{a}^j$, for some $i,j\in\{1,\ldots,n-1\}$. Then
the proof of Lemma \ref{lemma:cyc} leads to a contradiction.\\

\textit{Case 2.} $x=a^i=\tilde{a}^j\tilde{b}$, for some $i,j\in\{0,\ldots,n-1\}$.
That is $a^{2i}=\tilde{a}^j\tilde{b}\tilde{a}^j\tilde{b}=\tilde{a}^j\tilde{a}^{-j}=1$,
hence $n$ is even, and $i=n/2$. In particular,
$$5\leq 1+\frac{n}{2}=a^i(1)=\tilde{a}^j(\tilde{b}(1))=\tilde{a}^j(n-1)$$
which implies $j=1+n/2$. So $4+n/2=a^i(4)=\tilde{a}^j(\tilde{b}(4))=\tilde{a}^j(n-3)$.
But, on the other hand, $\tilde{a}^j(8-3)=6$,  $\tilde{a}^j(10-3)=3$, and
$\tilde{a}^j(n-3)=(n-4)/2$ for $n>10$. In any case this
is not equal to $4+n/2$, a contradiction.\\

\textit{Case 3.} $x=a^ib=\tilde{a}^j$, for some $i,j\in\{0,\ldots,n-1\}$.
But then $x^g=a^j=\tilde{a}^i\tilde{b}$ which is impossible, by the
considerations in case 2 above.\\

\textit{Case 4.} $x=a^ib=\tilde{a}^j\tilde{b}$, for some $i,j\in\{0,\ldots,n-1\}$.
Therefore, $1+i=a^i(1)=a^i(b(n))=\tilde{a}^j(\tilde{b}(n))=\tilde{a}^j(n-2)$.
Suppose that $1\leq j\leq n-5$ so that $1+i=\tilde{a}^j(n-2)=a^j(2)=2+j$.
That is $i=j+1\in\{2,\ldots,n-4\}$.
However, if $i=j+1=2$ then $\tilde{a}^j(\tilde{b}(1))=1\neq 2=a^i(b(1))$,
if $i=j+1=3$ then $\tilde{a}^j(\tilde{b}(1))=n-2\neq 3=a^i(b(1))$, and if $3<i=j+1\leq n-4$ then
$\tilde{a}^j(\tilde{b}(1))=j\neq j+1=i=a^i(b(1))$. Moreover, if $j=0$ then
we have $1+i=n-2$, thus $i=n-3$. But $\tilde{a}^0(\tilde{b}(1))=n-1\neq n-3=a^{n-3}(b(1))$.
Consequently, we must have $j\in\{n-4,n-3,n-2,n-1\}$.  \\

Suppose $j=n-1$ so that $a^ib=\tilde{a}^{-1}\tilde{b}$. Then
$1+i=a^i(1)=a^i(b(n))=\tilde{a}^{-1}(\tilde{b}(n))=\tilde{a}^{-1}(n-2)=1$, thus $i=0$ and
$b=\tilde{a}^{-1}\tilde{b}$. But $b(n-1)=2\neq n-1=\tilde{a}^{-1}(\tilde{b}(n-1))$, a contradiction.\\

Next suppose that $j=n-2$ where $a^ib=\tilde{a}^{-2}\tilde{b}$, and
$1+i=a^i(b(n))=\tilde{a}^{-2}(\tilde{b}(n))=n-1$. Hence $i=n-2$ which gives the
contradiction $a^{-2}(b(2))=a^{-2}(n-1)=n-3\neq 1=\tilde{a}^{-2}(3)=\tilde{a}^{-2}(\tilde{b}(2))$.\\

If $j=n-3$ then $a^ib=\tilde{a}^{-3}\tilde{b}$, and so $i=n-1$. But this time we get
the contradiction $1=\tilde{a}^{-3}(4)=\tilde{a}^{-3}(\tilde{b}(n-3))=a^{-1}(b(n-3))=a^{-1}(4)=3$.\\

Lastly, assume that $j=n-4$ so that $a^ib=\tilde{a}^{-4}\tilde{b}$, and $i=1$.
But, for $n\neq 8$, this implies
$n-6=\tilde{a}^{-4}(2)=\tilde{a}^{-4}(\tilde{b}(3))=a(b(3))=a(n-2)=n-1$, and for $n=8$
we get $6=\tilde{a}^{-4}(2)=\tilde{a}^{-4}(\tilde{b}(3))=a(b(3))=a(6)=7$.
Hence we have again a contradiction.\\

To summarize, neither of the four cases above can occur, and
the assertion of the proposition follows.
\end{proof}

\begin{remark}\label{rem:dihedral}\normalfont
(a) By Example 2.6, we know that the ring extension
$\mathbb{C}D_8\subset\mathbb{C}\mathfrak{S}_4$
has depth 4.\\

(b) The
inclusion matrix of the groups $D_{10}<\mathfrak{S}_5$ is
$$M=\begin{pmatrix}1 & 0& 1& 0&1& 0& 1\\ 0&0&0&2&0&0&0\\ 0&1&1&1&1&1&0\\ 0&1&1&1&1&1&0
\end{pmatrix},$$
which has depth 5. Hence the same applies to the ring extension
$\mathbb{C}D_{10}\subset\mathbb{C}\mathfrak{S}_5$.
\end{remark}

\end{appendix}

\end{document}